\newif\ifcolt
\definecolor{darkblue}{rgb}{0,0,.3}
\renewcommand{\theassumption}{A\arabic{assumption}}
\begin{document}

\ifcolt
\title[Lower Bounds for Locally Private Estimation]{Lower
  Bounds for Locally Private Estimation via \\ Communication Complexity}
\coltauthor{%
  \Name{John Duchi} \Email{\href{mailto:jduchi@stanford.edu}{jduchi@stanford.edu}}\\
  \addr Departments of Statistics and Electrical
  Engineering, Stanford University, and Apple
  \AND
  \Name{Ryan Rogers}
  \Email{\href{mailto:ryan_rogers@apple.com}{ryan\_rogers@apple.com}}\\
  \addr Apple%
}

\maketitle

\else
\begin{center}
  {\Large
    Lower Bounds for Locally Private Estimation via \\
    Communication Complexity
  } \\
  \vspace{.4cm}
  \begin{tabular}{cc}
    \large{John Duchi} & \large{Ryan Rogers} \\
    \large{Stanford University} &
    \large{Apple}
  \end{tabular}
\end{center}
\fi

\begin{abstract}
  We develop lower bounds for estimation under local privacy
  constraints---including differential privacy and its relaxations to
  approximate or
  R\'{e}nyi differential privacy---by showing an equivalence
  between private estimation and communication-restricted estimation
  problems. Our results apply to arbitrarily
  interactive privacy mechanisms, and they also give sharp lower bounds for
  all levels of differential privacy protections, that is, privacy
  mechanisms with privacy levels $\diffp \in [0, \infty)$.  As a particular
  consequence of our results, we show that the minimax mean-squared error for
  estimating the mean of a bounded or
  Gaussian random vector in $d$ dimensions
  scales as $\frac{d}{n} \cdot \frac{d}{ \min\{\diffp, \diffp^2\}}$.
\end{abstract}



\section{Introduction}

Estimation problems in which users keep their personal data private
even from data collectors are of increasing interest in
large-scale machine learning applications in both
industrial~\citep{ErlingssonPiKo14, ApplePrivacy17, BhowmickDuFrKaRo18} and
academic~\citep[e.g.][]{Warner65, BeimelNiOm08, KasiviswanathanLeNiRaSm11, DuchiJoWa18} settings. These notions of privacy
are satisfying because a user or data provider can be confident that his or
her data will remain private irrespective of what data collectors do, and
they mitigate risks for data collectors,
limiting challenges of hacking or
other interference.  Because of their importance, a parallel literature on
optimality results in local privacy
is developing. 

Yet this theory fails to address a number of important issues. Most
saliently, many of these results only apply in settings in which the
privatization scheme is non-adaptive, that is, the scheme remains static for
all data contributors except in 1-dimensional problems~\citep{DuchiJoWa18,
  YeBa18, GaboardiRoSh18}.  A second issue is that these results provide
meaningful bounds only for certain types of privacy.  Typically, the results
are sharp only for high levels of privacy (in the language of differential
privacy, privacy parameters $\diffp \le 1$), as in the papers
of~\citet{DuchiJoWa18}, \citet{RohdeSt18}, and \citet{DuchiRu18b}, or at
most logarithmic in dimension~\citep{YeBa18}; given the promise of privacy
amplification in local settings~\citep{ErlingssonFeMiRaTaTh19} and
challenges of high-dimensional problems~\citep{DuchiJoWa18,DuchiRu18b}, it
is important to address limits in the case that $\diffp \gg 1$. With the
exception of \citeauthor{DuchiRu18b}, they also fail to apply to weakenings
of differential privacy.

We remove many of these restrictions by framing the problem of estimation
and learning under local privacy constraints as a problem in the
communication complexity of statistical estimation.  By doing so, we can
build off of a line of sophisticated results due to
\citet{ZhangDuJoWa13_nips}, \citet{GargMaNg14}, and
\citet{BravermanGaMaNgWo16}, who develop minimax lower bounds on distributed
estimation problems.  To set the stage for our results and give intuition
for what follows, we recall the intuitive consequences of these results.
Each applies in a setting in which $n$ machines each receive a sample
$X_i$ from an underlying (unknown) population distribution $P$. These
machines then interactively communicate with a central server, or a public
blackboard, sending $n \cdot I$ bits of (Shannon) information in total,
so that each sends an average of $I$
bits. For $d$-dimensional mean estimation problems, where $X_i \in \R^d$ and
the goal is to estimate $\E_P[X]$, the main consequences of
these papers is that the mean-squared error for estimation must scale as
$\frac{d}{n} \cdot \max\{\frac{d}{I}, 1\}$, where $d/n$ is the optimal
(communication unlimited) mean-squared error based on a sample of size $n$.
Such scaling is intuitive, as to estimate a $d$-dimensional quantity, we
expect each machine must send roughly $d$ bits to achieve optimal
complexity, and otherwise we receive information about only
$d/I$ coordinates.  The strength of these results is that, in the most general
case~\citep{BravermanGaMaNgWo16}, they allow essentially arbitrary
interaction between the machines, so long as it is mitigated by the
information constraints.

We leverage these results on information-limited estimation to
establish lower bounds for locally private estimation.  By providing bounds
on the information released by locally private protocols---even when data
release schemes are adaptive and arbitrarily interactive---we can nearly
immediately provide minimax lower bounds on rates of convergence in
estimation and learning problems under privacy. By using this
information-based-complexity framework, we can simultaneously address each
of the challenges we identify in previous work on estimation under privacy
constraints, in that our results apply to differential privacy and its
weakenings, including approximate, concentrated, and R\'{e}nyi differential
privacy~\citep{DworkMcNiSm06, DworkKeMcMiNa06, DworkRo16, BunSt16,
  Mironov17}.  They also apply to arbitrarily interactive data release
scenarios.  Roughly, what we show is that so long as we wish to estimate
quantities for $d$-dimensional parameters that are ``independent'' of one
another---which we define subsequently---the effective sample size available
to a private procedure reduces from $n$ to $n \cdot \min\{\diffp, \diffp^2,
d\} / d$ for all $\diffp$-private procedures.

The use of information and communication complexity in determining
fundamental limits in differential privacy is not uniquely ours.
\citet{McGregorMiPiReTaVa10} show strong relationships between
approximating functions by low-error
differentially private protocols and low communication protocols.
In their case, however, they study low error approximation
of \emph{sample} quantities, where one wishes to estimate
$f(X_1, \ldots, X_n)$ for a function $f$.  Here, as in most work in
statistics and learning~\citep{Wainwright19, Yu97, DuchiJoWa18},
we provide limits on the estimation
functions of the population from which the sample comes.
In recent work,
\citet[Sec.~5]{JosephKuMaWu18} give communication-based bounds for
locally-private estimation of a 1-dimensional Gaussian mean; their bound
requires a single pass through the data and privacy parameter $\diffp =
O(1)$.

As a consequence of our lower bounds, we identify several open questions.
Work in information-limited
estimation~\citep{ZhangDuJoWa13_nips, GargMaNg14, BravermanGaMaNgWo16}
typically strongly relies on independence among estimands, which allows
decoupling them.  Our results similarly suffer these
restrictions, which is essential: when correlations exist among different
coordinates of the sample vectors $X$, it is possible to achieve
faster convergence. Thus, we argue that we should have renewed focus on
\emph{local} (non-minimax) notions of complexity~\citep{LeCamYa00,
  VanDerVaart98, DuchiRu18b}, which address the difficulty of the particular
problem at hand.

\paragraph{Notation}
We index several quantities.  We always indicate
coordinates of a vector by $j$, and (independent) vectors we index
by $i$. We consider private protocols communicating in rounds
indexed by time $t$.  We let $Z_{\leq i} \defeq (Z_1, \ldots,
Z_i)$ and $Z_{< i} \defeq (Z_1, \ldots, Z_{i-1})$, and similarly for superscripts.
For distributions $P$ and $Q$, $\drenyi{P}{Q} \defeq
\frac{1}{\alpha - 1} \log \int (dP/dQ)^\alpha dQ$ is the R\'{e}nyi
$\alpha$-divergence.



\section{Problem setting and main results}

We first describe our problem setting in detail, providing graphical
representations of our privacy (or communication) settings.
We present corollaries of our main lower bounds to highlight
their application, then (in Section~\ref{sec:lower-bounds})
give the main techniques, which extend Assouad's
method.

\subsection{Local privacy and interactivity}

In our local privacy setting, we consider $n$ individuals, each with private
data $X_i$, $i = 1, \ldots, n$, and each individual $i$ communicates
privatized views $Z_i$ of $X_i$.
This private communication may depend on other data providers'
private data. We consider communication of privatized data
in rounds $t = 1, 2, \ldots, T$, where $T$ may be infinite, and in round
$t$, individual $i$ communicates private datum $Z_i^{(t)}$, which
may depend on all previous private communications. This is the standard
blackboard communication model; at round $t$ the $Z_i^{(t)}$ and
previous blackboards $B^{(t-1)}$ join into $B^{(t)} = (Z_{\leq n}^{(t)},
B^{(t-1)})$. Thus, at round $t$, individual $i$ generates the
private variable $Z^{(t)}_i$ according to the channel
\begin{equation*}
  \channel_{i,t}\big(\cdot \mid X_i, Z_{<i}^{(t)}, B^{(t-1)}\big).
\end{equation*}
Figure~\ref{fig:communication-scheme} illustrates this communication scheme
over two rounds of communication.
We require
that the channels be regular conditional
  probabilities~\citep{Billingsley86}.

\begin{figure}[t]
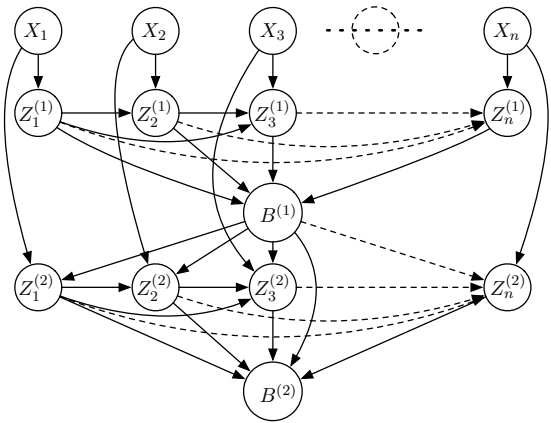

  \begin{center}
    \begin{tabular}{cc}
      \begin{minipage}{.7\columnwidth}
        \scalebox{.7}{
          \begin{overpic}[width=\columnwidth]{
              Figures/two-step-comms}
            \put(4.9,70){$X_1$}
            \put(25.9,70){$X_2$}
            \put(47,70){$X_3$}
            \put(88.5,70){$X_n$}
            \put(4,55){$Z^{(1)}_1$}
            \put(25,55){$Z^{(1)}_2$}
            \put(46,55){$Z^{(1)}_3$}
            \put(88,55){$Z^{(1)}_n$}
            \put(47,37){$B^{(1)}$}
            \put(4,24){$Z^{(2)}_1$}
            \put(25,24){$Z^{(2)}_2$}
            \put(46,24){$Z^{(2)}_3$}
            \put(88,24){$Z^{(2)}_n$}
            \put(47,5){$B^{(2)}$}
          \end{overpic}
        }
      \end{minipage}
      &
      \hspace{-3cm}
      \begin{minipage}{.4\columnwidth}
        \caption{\label{fig:communication-scheme}
          Two rounds of communication of variables, writing to public
          blackboards $B^{(1)}$ and $B^{(2)}$.}
      \end{minipage}
    \end{tabular}
  \end{center}
  \vspace{-.6cm}
\end{figure}

Our main assumptions are that the channels satisfy
quantitative privacy definitions.
\begin{definition}
  \label{definition:differential-privacy}
  Let $\diffp \ge 0$. A random variable $Z$ is
  \emph{$(\diffp,\delta)$-differentially
    private}~\citep{DworkMcNiSm06,DworkKeMcMiNa06} for $X \in \mc{X}$ if
  conditional on $X = x$, $Z$ has distribution $\channel(\cdot \mid x)$ and
  for all measurable $S$ and $x, x'$,
  \begin{equation*}
    \channel(Z \in S \mid x) \le e^{\diffp} \channel(Z \in S \mid x')
    + \delta.
  \end{equation*}
  When $\delta = 0$, we say $\channel$ is \emph{$\diffp$-differentially
    private}.  For $\renparam \ge 1$, the channel is \emph{$(\diffp,
    \renparam)$-R\'{e}nyi differentially private}~\citep{Mironov17} if for
  all $x, x' \in \mc{X}$,
  \begin{equation*}
    \drenyi{\channel(\cdot \mid x)}{\channel(\cdot \mid x')}
    \le \diffp.
  \end{equation*}
\end{definition}
\noindent
By taking $\renparam = 1$ in
Definition~\ref{definition:differential-privacy}, we obtain
\emph{$\diffp$-KL-privacy}.
If the channel $\channel$ is $\diffp$-differentially private,
then for any $\renparam \ge 1$, it also satisfies~\cite[Lemma 1]{Mironov17}
\begin{equation}
  \label{eqn:diffp-to-renyi}
  \drenyi{\channel(\cdot \mid x)}{\channel(\cdot \mid x')}
  \le
  \min\left\{2 (\renparam - 1) \diffp^2
  + \min\{2, (e^\diffp - 1)\} \diffp, \diffp \right\}.
\end{equation}
Because R\'{e}nyi-divergence is non-decreasing in $\renparam$, any $(\diffp,
\renparam)$-R\'{e}nyi differentially private channel is also $(\diffp,
\renparam')$-R\'{e}nyi private for $\renparam' \le \renparam$, making
KL-privacy the weakest R\'{e}nyi privacy.

We consider channel and disclosure scenarios where users
and data providers obtain a given amount of
privacy, but multiple notions of privacy
are possible. We separate these by allowing either
full interactivity or requiring a type of compositionality
of the private data releases; for
more on this distinction and examples separating the classes, see
\citet{JosephMaNeRo19}.

\subsubsection{Fully interactive privacy mechanisms}
\label{sec:fully-interactive}

The first and weakest assumptions on privacy we make are that the
private $\bZ \defeq \{Z_i^{(t)}\}_{i,t}$, or the entire communication
transcript, is private. To define this locally private setting, we require
an appropriate definition of privacy, for which we use
\citet{FeldmanSt18}.
\begin{definition}
  \label{definition:kl-stability}
  Let $\channel(\bZ \in \cdot \mid x_{\le n})$ denote the
  distribution of the collection $\bZ$ conditional on
  $X_{\le n} = x_{\le n}$, and for $i = 1, \ldots, n$, let
  the samples $x_{\le n}$ and $x_{\le n}^{(i)} \in \mc{X}^n$ differ
  in only example $i$, otherwise being arbitrary. The output $\bZ$ is
  \emph{$\diffpkl$-KL-locally private on average} if
  \begin{equation*}
    \frac{1}{n} \sum_{i = 1}^n
    \dkl{\channel(\bZ \in \cdot \mid x_{\le n})}{
      \channel(\bZ \in \cdot \mid x_{\le n}^{(i)})} \le \diffpkl.
  \end{equation*}
\end{definition}
\noindent
Definition~\ref{definition:kl-stability} is weaker than most versions of
local privacy. The most general standard notion of R\'{e}nyi $(\diffp,
\renparam)$-privacy is that $\drenyi{\channel(\bZ \in \cdot \mid x_{\le
    n})}{\channel(\bZ \in \cdot \mid x_{\le n}'} \le \diffp$ for all samples
$x_{\le n}$ and $x_{\le n}'$ differing in a single entry; this
immediately implies Definition~\ref{definition:kl-stability}.
We thus make the following
\begin{assumption}[Fully interactive local differential privacy]
  \label{assumption:general-epsilon-privacy}
  The entire output collection
  $\bZ$ is $\diffpkl$-KL-locally private on average
  (Definition~\ref{definition:kl-stability}).
\end{assumption}
\noindent
Assumption~\ref{assumption:general-epsilon-privacy} makes no assumptions on
the local randomizers, requiring that the entire set of communicated private
views $\bZ$ is private for each individual $i$. There may be challenges in
the implementation of general protocols satisfying
Assumption~\ref{assumption:general-epsilon-privacy} if the privacy of user
$i$ depends on the behavior of user $i'$---adversarial
users---though for the purposes of lower bounds, this appears to be the
weakest model of local privacy.
Assumption~\ref{assumption:general-epsilon-privacy} is also weaker than the
assumption that the private variables $\bZ$ are $\diffp$-differentially private:
inequality~\eqref{eqn:diffp-to-renyi} shows that $\diffp$-differential
privacy implies $\diffpkl = \min\{\diffp, \diffp^2 / \log 2\}$-KL
privacy. Thus, if each individual $i$ is guaranteed (differential) privacy
loss $\diffp_i$, the KL-privacy loss satisfies
\begin{equation}
  \diffpkl \le \frac{1}{n}
  \sum_{i = 1}^n \min\left\{\diffp_i, \frac{\diffp_i^2}{\log 2}\right\}.
  \label{eqn:diffp-to-average-diffp}
\end{equation}

We can also consider (fully interactive) local approximate differential
privacy, though in the case that $\delta > 0$, our lower bounds require a
slight technical modification of
Assumption~\ref{assumption:general-epsilon-privacy}, requiring that the
domain $\mc{X}$ of the data $X_i$ be finite and $\delta$ be appropriately
small.  \setcounter{assumption}{0}
\renewcommand{\theassumption}{A\arabic{assumption}$^\prime$}
\begin{assumption}[Fully interactive local approximate differential privacy]
  \label{assumption:general-epsilon-delta-privacy}
  The output $\bZ$ is $(\diffp,
  \delta)$-differentially private: for each $S \subset
  \mc{Z}^{nT}$
  and pair of samples $x_{\le n}, x'_{\le n} \in \mc{X}^n$ differing in
  at most a single element,
  \begin{equation*}
    \channel\left(\bZ \in S \mid X_{\le n} = x_{\le n}\right)
    \le
    e^\diffp \channel\left(\bZ \in S \mid X_{\le n} = x'_{\le n}\right)
    + \delta.
  \end{equation*}
  In addition, the parameters $(\diffp, \delta)$ satisfy
  \begin{equation*}
    \delta \le \frac{\min\{\diffp, 1\}}{256},
    ~~~
    \delta \max\{\diffp^{-1}, 1\}
    \log \frac{1}{\delta \max\{\diffp^{-1}, 1\}} \le \diffp^2,
    ~~~
    \delta \le \frac{\min\{\diffp, \diffp^2\}}{\log^2 |\mc{X}|}
  \end{equation*}
  and, if $\diffp \le \frac{1}{6}$, also
  $\delta \le \frac{\diffp^5}{64 \log^2 |\mc{X}|}$
  and $\delta \log^2 \frac{\diffp}{\delta}
  \le \diffp^5 / 16$.
\end{assumption}
\renewcommand{\theassumption}{A\arabic{assumption}}

\subsubsection{Compositional local privacy mechanisms}
\label{sec:compositional}

A different modification of
Assumption~\ref{assumption:general-epsilon-privacy} is to require the
individual randomizations be private while imposing
a summability (compositionality) condition.
This limits the interaction between private communications, and
there are problems for which fully interactive mechanisms are more powerful
than compositional ones~\citep{JosephMaNeRo19}.
To be concrete, let
$\intoit{Z} \defeq (Z_{< i}^{(t)}, B^{(t-1)})$
be the ``messages'' coming into the channel generating $Z_i^{(t)}$, so
$Z_i^{(t)} \sim \channel_{i,t}(\cdot \mid X_i, \intoit{Z})$ as in
Fig.~\ref{fig:communication-scheme}.
The starting point is
a \emph{1-compositional} privacy
definition~\citep{JosephMaNeRo19}, where in the (weakest) KL-privacy case we
assume that there exists a function $\diffp_{i,t}$ such that
\begin{equation}
  \label{eqn:summed-kl-privacy}
  \dkl{\channel_{i,t}(\cdot \mid X_i = x, \intoit{Z} = \intoit{z})}{
      \channel_{i,t}(\cdot \mid X_i = x',
      \intoit{Z} = \intoit{z})}
  \le \diffp_{i,t}(\intoit{z})
\end{equation}
and the $\diffp_{i,t}$ satisfy $n^{-1}\sum_{i =
  1}^n \sum_{t = 1}^T \E[\diffp_{i,t}(\intoit{Z}) \mid x_{\le n}] \le
\diffpkl$. Condition~\eqref{eqn:summed-kl-privacy} implies
Assumption~\ref{assumption:general-epsilon-privacy}
by the chain rule for KL-divergence;
in the case that $T =
1$, this captures the familiar \emph{sequentially
  interactive} local privacy mechanisms~\citep{DuchiJoWa18}.

In some cases, we can provide stronger results for compositional
$(\diffp,\delta)$-private channels than for the fully interactive case,
leading us to consider the following assumption,
which allows privacy levels
chosen conditionally on the past so long as the expected privacy
levels remain non-trivial.
\begin{assumption}[Compositional differential privacy bounds]
  \label{assumption:summed-approximate-privacy}
  For each $i$ and $t$ and all $\intoit{z}$,
  the channel
  mapping $X_i$ to $Z_i^{(t)}$ is $(\diffp_{i,t}(\intoit{z}),
  \delta_{i,t}(\intoit{z}))$-approximately differentially private.  There
  exist $\totaldelta \le \half$ and $\diffpkl$ such that
  \begin{equation*}
    \sum_{i = 1}^n \sum_{t = 1}^T \E[\delta_{i,t}(\intoit{Z})]
    \le \totaldelta
    ~~ \mbox{and} ~~
    \sum_{i = 1}^n \sum_{t = 1}^T \E\left[\min\left\{\frac{\diffp_{i,t}^2(
      \intoit{Z})}{\log 2}, \diffp_{i,t}(\intoit{Z})\right\}\right]
    \le n \cdot \diffpkl,
  \end{equation*}
  where the expectations are taken over the randomness in the private
  variables $\bZ$.
\end{assumption}
\noindent
In  Assumption~\ref{assumption:summed-approximate-privacy},
individual $i$ compromises
at most $(\sum_t \diffp_{i,t}, \sum_t \delta_{i,t})$-differential privacy.

\subsection{Minimax lower bounds on private estimation\label{sect:minimax}}

Given our definitions of (interactive) privacy and the interactive privacy
bounds in Assumptions~\ref{assumption:general-epsilon-privacy},
\ref{assumption:general-epsilon-delta-privacy},
and~\ref{assumption:summed-approximate-privacy}, we may now describe the
minimax framework in which we work. Let $\mc{P}$ be a collection of
distributions on a space $\mc{X}$, and let $\theta(P) \in \Theta$ be a
parameter of interest.  In
the classical (non-information-limited) setting, we wish to
estimate $\theta(P)$ given observations $X_i$ drawn i.i.d.\ according to the
distribution $P$.  We focus on $d$-dimensional parameters
$\theta$, and the performance of an estimator $\what{\theta} : \mc{X}^n \to
\R^d$ is its expected loss (or risk) for a loss $L : \R^d \times \R^d \to
\R_+$,
\begin{equation*}
  \E_P\left[ L(\what{\theta}(X_1, \ldots, X_n), \theta(P))
    \right].
\end{equation*}

We elaborate this classical setting by an additional privacy layer.
For a sample $\{ X_{1}, \ldots, X_n\}$, any
(interactive) channel $\channel$ produces a set of private
observations, each from some set $\mc{Z}$, 
\begin{equation*}
  \bZ \defeq \left(Z_1^{(1)}, Z_2^{(1)}, \ldots, Z_n^{(1)},
Z_1^{(2)}, \ldots, Z_n^{(2)}, \ldots, Z_n^{(T)} \right) \in \mc{Z}^{T \times
  n},
\end{equation*}
and we consider estimators $\what{\theta}$ that depend only on this
private sample, which then suffer risk
\begin{equation*}
  \E_{P,Q}\left[L(\what{\theta}(\bZ),
    \theta(P))\right],
\end{equation*}
where the expectation is taken over the $n$ i.i.d.\ observations
$X_i \sim P$ and the privatized views $\bZ$.
For the channel $\channel$,
we define the \emph{channel minimax risk} for the family
$\mc{P}$, parameter $\theta$, and loss $L$ by
\begin{equation}
  \label{eqn:minimax-risk}
  \minimax_n(\theta(\mc{P}), L, \channel)
  \defeq \inf_{\what{\theta}}
  \sup_{P \in \mc{P}}
  \E_{P,Q}\left[L\left(\what{\theta}(\bZ),
    \theta(P)\right)\right].
\end{equation}
We prove lower bounds on the quantity~\eqref{eqn:minimax-risk}
for  channels satisfying local privacy
bounds.

Rather than stating and proving our main theorems,
we present a number of corollaries of our
main results, all of whose proofs we defer to
\ifcolt
Appendix~\ref{sec:proofs-corollaries},
\else
Section~\ref{sec:proofs-corollaries},
\fi
to illustrate the power of the information-based framework we adopt.
Our first corollary deals with estimating Bernoulli
means.
\begin{corollary}
  \label{corollary:bernoulli}
  Let $\mc{P}_d$ be the collection of Bernoulli distributions on $\{0,
  1\}^d$ and  for a symmetric loss $\ell : \R \to \R_+$ minimized at
  $0$,
  let $L(\theta, \theta') = \sum_{j = 1}^d \ell(\theta_j -
  \theta_j')$. There are numerical constants $c_1,c_2,c_3 > 0$ such that for any
  channel $\channel$ satisfying any of
  Assumptions~\ref{assumption:general-epsilon-privacy},
  \ref{assumption:general-epsilon-delta-privacy} with $\diffpkl \defeq
  \min\{\diffp, \diffp^2\}$, or
  Assumption~\ref{assumption:summed-approximate-privacy} with privacy
  budget $\diffpkl$,
  \begin{equation*}
    \minimax_n(\theta(\mc{P}_d), L, \channel)
    \ge c_1 \cdot d
    \cdot \ell\left(\sqrt{c_2  \frac{d}{n \diffpkl}} \wedge c_3 \right).
  \end{equation*}
\end{corollary}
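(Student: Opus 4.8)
The plan is to combine a standard hypercube (Assouad) construction for Bernoulli mean estimation with our privacy-to-information reduction. Fix a separation $\delta \in (0, \tfrac14]$ and, for each sign vector $v \in \{-1,+1\}^d$, let $P_v$ be the product distribution on $\{0,1\}^d$ whose $j$th coordinate is $\mathrm{Bernoulli}(\tfrac12 + v_j \delta)$; each $P_v \in \mc{P}_d$ and $\theta(P_v)_j = \tfrac12 + v_j\delta$. The coordinates of $X \sim P_v$ are independent, so the estimands $\theta_j$ decouple; and for $v, v'$ differing only in coordinate $j$ the single-coordinate marginals have total variation distance $|\theta(P_v)_j - \theta(P_{v'})_j| = 2\delta$ and KL/chi-square divergence $\asymp \delta^2$. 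Since $\ell$ is symmetric and nondecreasing away from its minimum at $0$, any estimate $\what{\theta}$ whose $j$th coordinate lies on the wrong side of $\tfrac12$ incurs loss at least $\ell(\delta)$ in coordinate $j$, so $\{P_v\}$ is $\delta$-Hamming separated for $L$ in the sense required by our lower bound machinery.

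Next I would invoke our main lower bound -- the private version of Assouad's method (Section~\ref{sec:lower-bounds}) -- on the family $\{P_v\}$. Because the coordinates are independent under each $P_v$, the ``independent estimands'' hypothesis holds, and the theorem yields a bound of the form
\[
  \minimax_n(\theta(\mc{P}_d), L, \channel)
  \;\ge\; \frac{d}{2}\,\ell(\delta)\bigl(1 - \Psi\bigr),
\]
where $\Psi$ measures the information the transcript $\bZ$ carries about the hidden vector $V$. Under each of Assumptions~\ref{assumption:general-epsilon-privacy}, \ref{assumption:general-epsilon-delta-privacy} (taking $\diffpkl = \min\{\diffp,\diffp^2\}$ and converting via~\eqref{eqn:diffp-to-renyi}), and~\ref{assumption:summed-approximate-privacy} (with budget $\diffpkl$), the total privacy budget caps the information released: the whole transcript conveys at most $\asymp n\diffpkl$ nats about $X_{\le n}$, to be shared among the $d$ coordinates, together with the per-coordinate separation this gives $\Psi \le (C\, n\,\diffpkl\,\delta^2/d)^{1/2}$ for a numerical constant $C$. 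This step is exactly where the $\tfrac{d}{n}\cdot\tfrac{d}{\diffpkl}$ effective-sample-size reduction enters.

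Finally I would optimize $\delta$. Choosing $\delta = \sqrt{c_2 d/(n\diffpkl)} \wedge c_3$ with $c_2 = 1/(4C)$ and $c_3 = \tfrac14$ forces $\Psi \le \tfrac12$ in both regimes (the cap $c_3$ ensures $\delta \le \tfrac14$, so $\tfrac12 \pm \delta$ is a valid Bernoulli parameter and the construction survives even when $n\diffpkl \lesssim d$), whence $\minimax_n \ge \tfrac{d}{4}\ell(\delta) = c_1 d\cdot \ell(\sqrt{c_2 d/(n\diffpkl)} \wedge c_3)$ with $c_1 = \tfrac14$, as claimed.

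The genuine obstacle lies not in the corollary but in the ingredient it rests on: showing that an arbitrarily interactive, fully adaptive private mechanism releases only $O(n\diffpkl)$ total information about the sample -- i.e., controlling $\Psi$ in the fully interactive model -- which is the content of our main theorems. Within the corollary the only care needed is (i) checking the product-Bernoulli hypercube meets the independent-estimands hypothesis, (ii) verifying the single-coordinate perturbation has $\Theta(\delta)$ total variation and $\Theta(\delta^2)$ divergence so that $\Psi$ scales as $n\diffpkl\delta^2/d$, and (iii) keeping the optimized $\delta$ in $(0,\tfrac14]$ -- which is precisely why the truncation $\wedge\, c_3$ appears in the statement -- while translating the $(\diffp,\delta)$ budgets of Assumptions~\ref{assumption:general-epsilon-delta-privacy} and~\ref{assumption:summed-approximate-privacy} into the KL budget $\diffpkl$ through~\eqref{eqn:diffp-to-renyi}--\eqref{eqn:diffp-to-average-diffp}.
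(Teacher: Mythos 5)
Your proposal is correct and follows essentially the same route as the paper's proof: a product-Bernoulli hypercube with separation $\delta$, fed into the information-limited Assouad bound (Theorem~\ref{theorem:assouad-information} via Corollaries~\ref{corollary:useful-general-one} and~\ref{corollary:useful-one}), with the strong data processing constant $\asymp \delta^2$ coming from the bounded likelihood ratio (Lemma~\ref{lemma:sdp-for-bounded-likelihood}) and $\delta^2 \asymp \min\{1, d/(n\diffpkl)\}$ at the end. The only nit is that under Assumption~\ref{assumption:summed-approximate-privacy} the bound carries an extra additive $\totaldelta \le \half$ term, so you must force $\Psi$ below a smaller constant (e.g.\ $\tfrac14$) rather than $\tfrac12$, exactly as the paper does when it invokes $\totaldelta \le \half$.
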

\noindent
In particular, if $\ell(t) = t^2$ and
if the private data releases of each individual
are $\diffp$-locally differentially private (under any model
of interaction), then
inequality~\eqref{eqn:diffp-to-renyi} and the corollary
imply
that for a constant $c>0$, for
any estimator $\what{\theta}$ there exists
a Bernoulli distribution $P$ with mean $\theta$ such that
\begin{equation*}
  \E_{P,Q}\left[\ltwobig{\what{\theta}(\bZ) - \theta}^2\right]
  \ge c \left(\frac{d^2}{n \min\{\diffp, \diffp^2\}}
  \vee \frac{d}{n}\right).
\end{equation*}


A counterpart to the lower
bound of Corollary~\ref{corollary:bernoulli} is that
$\diffp$-differentially-private channels achieve this risk when $1 \le
\diffp \le d$, and they require no interactivity. To within numerical
constant factors, weakenings of local differential privacy---down
to KL-privacy---provide no rate of convergence improvement over differentially
private mechanisms.
\citet[Sec.~4.1]{BhowmickDuFrKaRo18} exhibit a mechanism
(\texttt{PrivUnit}$_2$), based on sampling from spherical
caps, that given $x$ satisfying $\ltwo{x} \le r$ samples
$\diffp$-differentially private $Z \in \R^d$ satisfying $\E[Z \mid x] = x$ and
$\ltwo{Z} \le C r \sqrt{d / \min\{\diffp, \diffp^2\}}$ for a numerical
constant $C$.
Taking the radius $r = \sqrt{d}$
the  estimator $\what{\theta}_n =
\frac{1}{n} \sum_{i = 1}^n Z_i$ satisfies
\begin{equation*}
  \E[\ltwos{\what{\theta}_n - \theta}^2]
  \le \frac{1}{n} \E[\ltwo{Z_1}^2]
  \le C \frac{d^2}{n \min\{\diffp, \diffp^2\}}.
\end{equation*}
For the simpler case of KL-privacy, Gaussian noise addition suffices.
We have thus characterized the complexity of locally
private $d$-dimensional estimation of bounded vectors.

By a reduction, the lower bound of
Corollary~\ref{corollary:bernoulli} applies to logistic regression.
In this case, we let $\mc{P}_d$ be the collection of
logistic distributions on $(X, Y) \in \{-1, 1\}^d \times \{\pm 1\}$,
where for $\theta \in \R^d$,
$P(Y = y \mid X = x) = 1 / (1 + \exp(-y \<x, \theta\>))$.
We take the loss $L$ as the gap in prediction
risk: for
\begin{equation*}
  \loss(\theta; (x, y))
  = \log(1 + \exp(-y \<x, \theta\>)),
  ~~ \mbox{we set} ~~
  \risk_P(\theta) \defeq \E_P[\loss(\theta; (X, Y))]
\end{equation*}
and $\theta(P) = \argmin_\theta \risk_P(\theta)$. We define the
excess risk $L(\theta,
\theta(P)) = \risk_P(\theta) - \risk_P(\theta(P))$.
\begin{corollary}
  \label{corollary:logistic-lower-bound}
  Let $\mc{P}_d$ be the family of logistic distributions and $L$ be the
  excess logistic risk as above.  There exists a numerical constant $c > 0$
  such that for any sequence $\channel_n$ of channels satisfying any of
  Assumption~\ref{assumption:general-epsilon-privacy},
  or~\ref{assumption:general-epsilon-delta-privacy} with $\diffpkl =
  \min\{\diffp, \diffp^2\}$,
  or Assumption~\ref{assumption:summed-approximate-privacy}, for all
  suitably large $n$ we have
  \begin{equation*}
    \minimax_n(\theta(\mc{P}_d), L, \channel_n)
    \ge c \cdot \frac{d}{n} \cdot \frac{d}{\diffpkl}.
  \end{equation*}
\end{corollary}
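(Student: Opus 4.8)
The plan is to instantiate the interactive Assouad-type lower bound developed in Section~\ref{sec:lower-bounds}---the same engine behind Corollary~\ref{corollary:bernoulli}---on a logistic subfamily in which each coordinate of $\theta$ carries one bit, so that the logistic problem ``contains'' a $d$-dimensional mean-estimation problem with exactly the same information-theoretic profile as the Bernoulli one. Fix a scale $\delta>0$ (to be optimized) with $\delta \le 1/(4d)$. Let $X \in \{-1,1\}^d$ have i.i.d.\ Rademacher coordinates, and for $v \in \{-1,1\}^d$ let $P_v$ be the logistic law with $P_v(Y = y \mid X = x) = 1/(1+\exp(-y\delta\<x,v\>))$; the model is well specified, so $\theta(P_v) = \delta v$. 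This family plays the role of the perturbed Bernoulli family of Corollary~\ref{corollary:bernoulli} with $\delta$ replacing the mean perturbation.

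First I would verify the loss-separation hypothesis. Because the population logistic risk is a Bregman divergence of the log-partition function, for every $\theta$
\[
  L(\theta, \theta(P_v)) = \risk_{P_v}(\theta) - \risk_{P_v}(\theta(P_v)) = \E_X\!\left[\dkl{\mathrm{Bern}(\sigma(\delta\<X,v\>))}{\mathrm{Bern}(\sigma(\<X,\theta\>))}\right], \quad \sigma(u) = \tfrac{1}{1+e^{-u}}.
\]
Since $\nabla^2 \risk_{P_v}(\theta) = \E_X[\sigma'(\<X,\theta\>) XX^\top] \succeq c\, I_d$ whenever $\|\theta\|_1 \le \tfrac12$ (using $\E[XX^\top] = I_d$ and $\sigma' \ge c > 0$ on $[-\tfrac12,\tfrac12]$), a Taylor expansion around the minimizer $\theta(P_v) = \delta v$ (whose $\ell_1$ norm is $\delta d \le \tfrac14$) gives $L(\theta,\theta(P_v)) \ge c'\min\{\|\theta - \delta v\|_2^2, 1/d\}$ for \emph{all} $\theta \in \R^d$; and since $\delta \le 1/(4d)$ forces $d\delta^2 \le 1/d$, writing $\what{v}_j = \mathrm{sign}(\what\theta_j)$ yields
\[
  L(\what\theta, \theta(P_v)) \ge c' \delta^2 \sum_{j=1}^d \mathbf{1}\big\{\what{v}_j \ne v_j\big\}
\]
for every estimator $\what\theta$, which is the Hamming-type separation Assouad requires, with per-coordinate weight $\asymp \delta^2$.

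Next comes the information bound. Flipping coordinate $j$ changes $P_v$ to $P_{v^{\oplus j}}$ only in the conditional law of $Y$ given $X$, and only by shifting its logit by $\pm 2\delta$; since all logits lie in $[-1,1]$, for every $x$ the total-variation distance between $P_v(\cdot\mid x)$ and $P_{v^{\oplus j}}(\cdot \mid x)$ is at most $\delta/2$ and the corresponding KL (or R\'{e}nyi) divergence is $\le C\delta^2$, uniformly in $x$. This is precisely the per-coordinate signal strength---$\tau^2$, where $\tau$ was the Bernoulli mean perturbation, now with $\tau \leftrightarrow \delta$---so the bounds on the information released by any channel $\channel_n$ satisfying Assumption~\ref{assumption:general-epsilon-privacy}, \ref{assumption:general-epsilon-delta-privacy} (note $\mc{X}$ is finite with $|\mc{X}| = 2^{d+1}$, and take $\diffpkl = \min\{\diffp,\diffp^2\}$), or~\ref{assumption:summed-approximate-privacy}, as established in Section~\ref{sec:lower-bounds}, carry over verbatim. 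Feeding separation $\asymp \delta^2$ and per-coordinate divergence $\asymp \delta^2$ into the interactive Assouad bound gives, for numerical constants $c_0, c_0'$,
\[
  \minimax_n(\theta(\mc{P}_d), L, \channel_n) \ge c_0\, d\, \delta^2 \Big(1 - c_0' \sqrt{n \diffpkl\, \delta^2 / d}\Big)_+ ,
\]
the same expression (up to constants) underlying Corollary~\ref{corollary:bernoulli}. Choosing $\delta^2 \asymp d/(n\diffpkl)$ makes the parenthesis a fixed positive constant and yields $\minimax_n \gtrsim d^2/(n\diffpkl) = \frac{d}{n}\cdot\frac{d}{\diffpkl}$; this choice respects $\delta \le 1/(4d)$ exactly when $n\diffpkl \gtrsim d^3$, which is the meaning of ``all suitably large $n$''.

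The step I expect to be the main obstacle is the uniform curvature bound underlying the loss-separation: the Bregman identity only delivers $L(\theta,\theta(P_v)) \gtrsim \|\theta - \theta(P_v)\|_2^2$ on a region where all logits $\<x,\theta\>$ are $O(1)$ for \emph{every} $x \in \{-1,1\}^d$, which in turn forces $\delta \lesssim 1/d$ and hence the requirement $n\diffpkl \gtrsim d^3$. One could instead keep $\delta$ at its natural scale $\sqrt{d/(n\diffpkl)}$ (needing only $\delta\sqrt d \lesssim 1$, i.e.\ $n\diffpkl \gtrsim d^2$) by controlling the rare event $\{|\<X,\theta\>| > R\}$ via sub-Gaussian concentration of $\<X,\theta\>$ and absorbing its contribution into constants; either way, everything beyond this point is the bookkeeping already carried out for Corollary~\ref{corollary:bernoulli}.
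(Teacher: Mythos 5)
Your loss-separation step is essentially sound (the paper does the same thing, via an asymptotic Hessian lower bound rather than your explicit $\delta \le 1/(4d)$ restriction), but there is a genuine gap at the step where you claim the information bounds ``carry over verbatim'' to your logistic family. The engine behind Corollary~\ref{corollary:bernoulli} is Theorem~\ref{theorem:assouad-information} (via Corollaries~\ref{corollary:useful-general-one} and~\ref{corollary:useful-one}), and that theorem requires the generation structure~\eqref{eqn:product-generation}: conditional on $\packrv = \packval$, the datum must be a \emph{product across coordinates}, with coordinate $j$ depending only on $\packval_j$, so that the Markov chain $\packrv_j \to X_{\le n, j} \to \bZ$ holds with $X_{\le n,j}$ independent of $\packrv_{\setminus j}$ (this is what drives the strong-data-processing decomposition and Lemma~\ref{lemma:coordinate-complexity-to-global}). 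Your family---$X$ uniform on $\{\pm 1\}^d$ and $P_\packval(Y = y \mid X = x) = \sigma(y\delta\<x,\packval\>)$---does not have this structure: $Y$ depends on all coordinates of $\packval$ jointly, and the natural sufficient statistics $W_j = Y X_j$ have joint law $P(W = w) = 2^{-(d-1)}\sigma(\delta\<w,\packval\>)$, which does not factor across $j$. Your substitute observation, that flipping $\packval_j$ moves $P_\packval$ by only $O(\delta^2)$ in KL uniformly in $x$, is the right ingredient for a \emph{vanilla} Assouad bound, but that alone yields only the non-private rate $d/n$; the factor $d/\diffpkl$ comes precisely from combining the per-coordinate SDP constant with the conditional mutual information bound $I(X_{\le n};\bZ\mid\packrv) \le n\diffpkl$, and that combination is exactly what fails without the product structure.

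The paper avoids this by choosing the joint distribution differently: it specifies the \emph{class-conditional} law, with $X_j \mid Y$ independent across $j$ and each depending only on $\theta_j = \delta\packval_j$, so that $Y\mid X$ is still logistic with parameter $\theta$ but now $Y X_j \sim \mathrm{Bernoulli}(e^{\theta_j}/(1+e^{\theta_j}))$ independently across $j$ (and $Y$ itself is uniform and carries no information about $\packrv$). Sign estimation in this logistic family then reduces, via inequality~\eqref{eqn:logistic-loss-lb-bernoulli}, to the independent-Bernoulli problem, to which Theorem~\ref{theorem:assouad-information} and the information bounds apply exactly as in Corollary~\ref{corollary:bernoulli}. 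If you replace your uniform-marginal construction by this class-conditional one (the marginal of $X$ is then not uniform, but tends to uniform as $\delta \to 0$, so your curvature argument survives), the rest of your outline goes through; as written, however, the key information-theoretic step does not apply to the family you constructed. A secondary, minor point: your hard constraint $\delta \le 1/(4d)$ forces $n\diffpkl \gtrsim d^3$, which is more restrictive than necessary---controlling $\<\theta, X\>$ on the $\ell_2$ ball $\ltwo{\theta}\le\gamma$ by sub-Gaussian concentration, as the paper does, only needs $\delta\sqrt{d}$ small.
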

\noindent

It is also of interest to consider continuous distributions. For
concreteness, we consider estimation of general and sparse Gaussian means,
showing results that follow as corollaries of our information bounds and
\citet{BravermanGaMaNgWo16}.  We prove the lower bounds for channels
satisfying Assumption~\ref{assumption:general-epsilon-privacy}
or~\ref{assumption:summed-approximate-privacy}; proving them under
Assumption~\ref{assumption:general-epsilon-delta-privacy} remains a
challenge.\footnote{We use mutual information-based bounds, and on the
  (negligible) $\totaldelta$-probability event of a privacy failure under
  Assumption~\ref{assumption:general-epsilon-delta-privacy}, it is possible
  to release infinite information. For compositional channels satisfying
  Assumption~\ref{assumption:summed-approximate-privacy}, we show (see
  Lemma~\ref{lemma:tv-eps-delta} in
  Sec.~\ref{sec:information-compositional-bounds}) that each channel is
  within $\delta_{i,t}$-variation distance to a differentially private
  $(\delta_{i,t} = 0)$ channel, so lower bounds based on testing apply.  The
  argument fails in the fully interactive setting, because the interaction
  may break the independence structure of the communication upon which our
  results rely.}
\begin{corollary}
  \label{corollary:gaussian}
  Let $\mc{P}$ be the collection of Gaussian distributions $\normal(\theta,
  \sigma^2 I)$ where $\theta \in [-1, 1]^d$, $\sigma^2 > 0$ is known,
  and consider the squared
  $\ell_2$ loss $L(\theta, \theta') = \ltwo{\theta - \theta'}^2$.
  There exist numerical
  constants $c, c_0 > 0$ such that if the channel $\channel$
  satisfies
  Assumption~\ref{assumption:general-epsilon-privacy}
  or \ref{assumption:summed-approximate-privacy}
  with $\totaldelta \le c_0$,
  \begin{equation*}
    \minimax_n(\theta(\mc{P}), \ltwo{\cdot}^2, \channel)
    \ge c \cdot \min\left\{d,
    \max\left\{\frac{d}{\diffpkl} \cdot \frac{d \sigma^2}{n},
    \frac{d \sigma^2}{n}\right\}\right\}.
  \end{equation*}
\end{corollary}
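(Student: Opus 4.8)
The argument is modular: I would combine the information bounds of this paper---which convert the privacy guarantees of Assumptions~\ref{assumption:general-epsilon-privacy} and~\ref{assumption:summed-approximate-privacy} into a bound on the Shannon information the transcript $\bZ$ carries about the sample---with the distributed Gaussian mean estimation lower bound of~\citet{BravermanGaMaNgWo16}. The outer $\min\{d,\cdot\}$ and inner $\max\{\cdot,\,d\sigma^2/n\}$ are bookkeeping: the estimator $\what{\theta}\equiv 0$ has risk $\sup_{\theta\in[-1,1]^d}\ltwo{\theta}^2\le d$, so $\minimax_n\le d$ always, and since $\min\{d,\max\{A,B\}\}=\max\{\min\{d,A\},\min\{d,B\}\}$ it suffices to prove the two lower bounds $\minimax_n\ge c\min\{d,\,d^2\sigma^2/(n\diffpkl)\}$ and $\minimax_n\ge c\min\{d,\,d\sigma^2/n\}$. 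The latter is the classical Gaussian minimax rate and follows from Assouad's method with a hypercube prior of scale $\tau^2\asymp(\sigma^2/n)\wedge 1$ and no information constraint; the content is the first, information-limited bound.

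For the information-limited bound I would take the product prior $\theta=\tau\nu$ with $\nu$ uniform on $\{-1,+1\}^d$ (or the Gaussian prior of~\citet{BravermanGaMaNgWo16}), with scale $\tau^2=c_2(\sigma^2 d)/(n\diffpkl)\wedge 1$ to be fixed. Since the coordinates of $\theta$ are independent under the prior and $X_i\sim\normal(\theta,\sigma^2 I)$ factorizes over coordinates, the sharp form of Assouad's method the paper develops in Section~\ref{sec:lower-bounds} lower bounds the Bayes risk by $c\,d\,\tau^2$ provided the total testing information $\sum_{j=1}^d\lVert P_{j,+}-P_{j,-}\rVert_{\mathrm{TV}}$ is at most $d/4$, where $P_{j,\pm}\defeq\channel(\bZ\in\cdot\mid\nu_j=\pm 1)$ integrates out the other coordinates. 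By Pinsker this sum is $\le\sqrt{d\sum_{j=1}^d I(\nu_j;\bZ)}$, so everything reduces to bounding $\sum_j I(\nu_j;\bZ)$, and that is where privacy enters.

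The information bounds supply the budget. Under Assumption~\ref{assumption:general-epsilon-privacy}, KL-local privacy on average gives $\sum_{i=1}^n I(X_i;\bZ\mid X_{-i})\le n\diffpkl$ for data from any product distribution (the main theorem of the paper; recall $\diffp$-local differential privacy gives $\diffpkl\asymp\min\{\diffp,\diffp^2\}$ by~\eqref{eqn:diffp-to-renyi}), and under Assumption~\ref{assumption:summed-approximate-privacy} with $\totaldelta\le c_0$, Lemma~\ref{lemma:tv-eps-delta} places each channel within $\delta_{i,t}$ total variation of a pure differentially private channel whose summed privacy is $\le n\diffpkl$, the $\le\totaldelta\le c_0$ total-variation perturbation changing the risk by only a constant factor. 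Feeding this budget into the Gaussian distributed data processing inequality of~\citet{BravermanGaMaNgWo16}---which controls how a message of limited information about $\normal(\theta,\sigma^2 I)$ can inform a single coordinate, yielding $\sum_j I(\nu_j;\bZ)\lesssim\min\{1,\tau^2/\sigma^2\}\sum_i I(X_i;\bZ\mid X_{-i})\lesssim\min\{1,\tau^2/\sigma^2\}\,n\diffpkl$---the testing-information sum is $\lesssim\sqrt{d\,\min\{1,\tau^2/\sigma^2\}\,n\diffpkl}$, which is $\le d/4$ exactly for the chosen $\tau^2\asymp(\sigma^2 d)/(n\diffpkl)$ (in both the $\tau\le\sigma$ and $\tau>\sigma$ regimes). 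The Assouad bound then gives Bayes risk $\ge c\,d\,\tau^2\asymp\min\{d,\,d^2\sigma^2/(n\diffpkl)\}$, as claimed.

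The main obstacle is carrying the~\citet{BravermanGaMaNgWo16} analysis through with a genuine Shannon mutual-information budget---rather than their communication/entropy budget, which loses a $\log n$ factor---in the fully interactive blackboard model, while preserving the cross-coordinate independence on which the decoupling relies: one must rule out that many low-information rounds collude through $\bZ$ to reveal more than $n\diffpkl$, and treat the continuous sample space without measure-theoretic trouble. This is also why the corollary excludes Assumption~\ref{assumption:general-epsilon-delta-privacy}: for fully interactive $(\diffp,\delta)$-privacy with $\delta>0$ the transcript can carry unbounded information on the $\totaldelta$-probability privacy-failure event, so the mutual-information route fails; only the compositional structure of Assumption~\ref{assumption:summed-approximate-privacy}, through the pointwise total-variation reduction of Lemma~\ref{lemma:tv-eps-delta} to a pure differentially private channel, recovers the argument.
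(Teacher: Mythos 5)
Your plan is essentially sound and lands on the same two pillars as the paper's proof: convert privacy into a mutual-information budget (Lemma~\ref{lemma:full-interactive-information-bound} under Assumption~\ref{assumption:general-epsilon-privacy}, and Lemma~\ref{lemma:tv-eps-delta} to pass to pure-DP channels under Assumption~\ref{assumption:summed-approximate-privacy}), then feed that budget into \citet{BravermanGaMaNgWo16}'s Gaussian machinery; the $\min/\max$ bookkeeping and the classical $d\sigma^2/n$ term are handled the same way. Where you diverge is the middle step: the paper does not re-derive an Assouad-plus-SDPI bound for Gaussians at all --- it invokes \citeauthor{BravermanGaMaNgWo16}'s Theorem~4.5 (their sparse-Gaussian information-cost lower bound, applied with $k = d/2$ and separation $\delta^2 \asymp \min\{d^2\sigma^2/(n\diffpkl), 1\}$) as a black box, after a projection step that bounds the loss on $[-\delta,\delta]^d$ and turns the $\totaldelta$ total-variation perturbation of Lemma~\ref{lemma:tv-eps-delta} into an additive $8 d \delta^2 \totaldelta$ loss in the minimax risk (this is the precise version of your ``constant factor'' claim, and it is why $\totaldelta \le c_0$ suffices). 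One caution about your route: you cannot literally use ``the sharp form of Assouad's method the paper develops,'' because Theorem~\ref{theorem:assouad-information} and Corollary~\ref{corollary:braverman} carry the factor $e^b + 1$ from the assumption $|\log(dP_1/dP_{-1})| \le b$, which is infinite for Gaussian location families; you must instead use \citeauthor{BravermanGaMaNgWo16}'s Gaussian-specific strong data processing results (which you do cite), effectively reproving a Gaussian analogue of that theorem. Relatedly, the ``main obstacle'' you flag --- carrying a Shannon-information budget through the fully interactive blackboard model while preserving cross-coordinate decoupling --- is exactly what their Theorem~4.5 already delivers, which is why the paper's proof is a short reduction rather than a reconstruction; if you do reconstruct it, make sure the information bound you use is the conditional one, $I(X_{\le n}; \bZ \mid \packrv) \le n\diffpkl$, rather than your $\sum_i I(X_i; \bZ \mid X_{-i})$ variant, since the conditioning on $\packrv$ is what the coordinate-wise decoupling (Lemma~\ref{lemma:coordinate-complexity-to-global}) relies on.
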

\noindent
We
demonstrate how to achieve this risk in
Section~\ref{sec:generic-achievability}, showing
(as is the case for our other results) that it is
achievable by differentially private schemes.

We can also state lower bounds for the sparse case, using
\citet[Theorem~4.5]{BravermanGaMaNgWo16}.
Let $\mc{N}^d_{k,\sigma^2}$ denote the collection
of $k$-sparse Gaussian distributions $\normal(\theta, \sigma^2 I)$,
$\theta \in [-1, 1]^d$.
\begin{corollary}
  \label{corollary:sparse-gaussian}
  There exist numerical constants $c, c_0 > 0$ such that for any channel
  $\channel$ satisfying
  Assumptions~\ref{assumption:general-epsilon-privacy} or
  \ref{assumption:summed-approximate-privacy} with $\totaldelta
  \le c_0$, and $d \ge 2k$,
  \begin{equation*}
    \minimax_n(\theta(\mc{N}^d_{k,\sigma^2}), \ltwo{\cdot}^2, \channel)
    \ge c \min\left\{k,
    \max\left\{\frac{d}{\diffpkl}
    \cdot \frac{k \sigma^2}{n},
    \frac{k \sigma^2 \log \frac{d}{k}}{n} \right\}\right\}.
  \end{equation*}
\end{corollary}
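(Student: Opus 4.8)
\emph{Proof idea.} The plan is to recognize the $k$-sparse Gaussian mean problem under a locally private channel as an instance of information-limited distributed estimation and then invoke the sparse lower bound of \citet[Theorem~4.5]{BravermanGaMaNgWo16}. The one new ingredient is the paper's main information inequality, which says that a channel satisfying Assumption~\ref{assumption:general-epsilon-privacy} with budget $\diffpkl$ reveals, in aggregate, no more information about the hard directions of the estimation problem than a distributed protocol in which each of the $n$ users transmits $\diffpkl$ nats; substituting this budget for the total communication in \citet[Theorem~4.5]{BravermanGaMaNgWo16} produces precisely the two competing terms $\frac{d}{\diffpkl}\cdot\frac{k\sigma^2}{n}$ (the privacy-degraded rate) and $\frac{k\sigma^2\log\frac{d}{k}}{n}$ (the unconstrained statistical rate), with the outer $\min\{k,\cdot\}$ coming from the boundedness constraint $\theta\in[-1,1]^d$.

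In detail: (i) adopt the packing of $k$-sparse vectors with nonzero entries $\pm\tau$, $\tau\le1$, used in \citet{BravermanGaMaNgWo16}, so that $\ltwo{\theta}^2\le k\tau^2\le k$; the extension of Assouad's method in Section~\ref{sec:lower-bounds}, applied coordinatewise with this packing, clips the per-coordinate separation at $1$, which is what caps the bound at $\min\{k,\cdot\}$. (ii) Apply the paper's information bounds for fully interactive KL-private channels to conclude that under Assumption~\ref{assumption:general-epsilon-privacy} the transcript $\bZ$ carries at most $C n\diffpkl$ bits about the $d$ coordinate signs, for a numerical constant $C$; this is the private counterpart of the ``total $B$ bits of communication'' hypothesis of \citet{BravermanGaMaNgWo16}. (iii) For Assumption~\ref{assumption:summed-approximate-privacy} with $\delta>0$, first use Lemma~\ref{lemma:tv-eps-delta} to replace each $(\diffp_{i,t}(\intoit{z}),\delta_{i,t}(\intoit{z}))$-private channel by a purely differentially private one; chaining these replacements over the $nT$ messages alters the law of $\bZ$ by at most $\sum_{i,t}\E[\delta_{i,t}(\intoit{Z})]\le\totaldelta$ in total variation, so for $\totaldelta\le c_0$ this costs only a numerical factor, after which the compositional condition $\sum_{i,t}\E[\min\{\diffp_{i,t}^2/\log2,\diffp_{i,t}\}]\le n\diffpkl$ together with the chain rule for KL divergence places us in the pure case of step~(ii) with budget $\diffpkl$. (iv) Feed the budget $n\diffpkl$ into \citet[Theorem~4.5]{BravermanGaMaNgWo16}, optimize over $\tau$, and read off the stated bound.

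The main obstacle is step~(iii), precisely as the footnote warns: the coupling that trades $(\diffp_{i,t},\delta_{i,t})$-approximate privacy for pure differential privacy must be carried out round by round so as to preserve the conditional independence of the users' messages given the current blackboard, since it is this product structure over the ``leaves'' of the communication tree that the cut-set/direct-sum argument of \citet{BravermanGaMaNgWo16} exploits; this is exactly why the reduction works for compositional channels (Assumption~\ref{assumption:summed-approximate-privacy}) but not for the fully interactive approximate-DP model (Assumption~\ref{assumption:general-epsilon-delta-privacy}). A secondary point is to re-derive the information-limited form of \citet[Theorem~4.5]{BravermanGaMaNgWo16} in terms of the Shannon/mutual-information budget $n\diffpkl$ that we actually control rather than raw message length---which follows by re-running their cut-set steps with mutual information in place of description length---and to pin down the constant $c_0$ so that the total-variation slack $\totaldelta$ demonstrably shrinks only the prefactor $c$ and not the rate.
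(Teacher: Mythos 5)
Your proposal is correct and follows essentially the same route as the paper: reduce the $k$-sparse Gaussian problem to \citet[Theorem~4.5]{BravermanGaMaNgWo16} with the information budget $n\diffpkl$ supplied by Lemma~\ref{lemma:full-interactive-information-bound}, handle Assumption~\ref{assumption:summed-approximate-privacy} by passing to the pure-DP surrogate channels of Lemma~\ref{lemma:tv-eps-delta}, and obtain the outer $\min\{k,\cdot\}$ from the boundedness of the parameter set. The one detail the paper makes explicit that you leave implicit is that, to make $c_0$ a dimension-free constant, one takes the estimator w.l.o.g.\ $k$-sparse (and bounded), so the total-variation slack enters the risk comparison as $16 k \delta^2 \totaldelta$ rather than a term proportional to $d \delta^2 \totaldelta$, and is then absorbed into the constant $c$.
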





\newcommand{\bmu}{\pmb{\mu}}
\newcommand{\proj}{\Pi}

\section{Achievability, information complexity,
  independence, and correlation}
\label{sec:achievability-and-correlation}

The lower bounds in our corollaries
are achievable---we demonstrate each of these
here---but we highlight a more subtle question regarding correlation.
Each of our lower bounds relies on the independence
structure of the data: roughly, all the communication-based
bounds we discuss require the coordinates of $X$ to follow
a product distribution. The lower bounds in this case are intuitive:
we must estimate $d$-dimensional quantities using (on average) $\diffp$
bits, so we  expect penalties scaling as $d / \diffp$ because one
coordinate carries no information about the others. In cases where there is
correlation, however, we might hope for more
efficient estimation; we view this as a major open question in privacy and,
more broadly, information-constrained estimators.  To that end,
we briefly show (Section~\ref{sec:generic-achievability}) that each
of our lower bounds in
Corollaries~\ref{corollary:bernoulli}--\ref{corollary:gaussian} is
achievable. After this, we mention asymptotic results
for sparse estimation (Sec.~\ref{sec:sparse-estimation})
and correlated data problems (Sec.~\ref{sec:correlated-data}).

\subsection{Achievability by differentially-private estimators}
\label{sec:generic-achievability}

We first demonstrate that the results in each of our corollaries are
achievable by $\diffp$-differentially private channels with limited
interactivity. We have already done so for
Corollary~\ref{corollary:bernoulli}. For
Corollary~\ref{corollary:logistic-lower-bound}, Corollary~3.2 of
\citet{BhowmickDuFrKaRo18} gives the achievability result. We
provide the Gaussian results
for the sake of completeness. (For the one dimensional case, see
also~\cite{JosephKuMaWu18}.)

We begin by demonstrating a one-dimensional Gaussian estimator.
Let $X_i \simiid \normal(\theta, \sigma^2)$, where $\sigma^2$ is known
and $\theta \in [-1, 1]$.
Consider $\diffp$-differentially private version of $X_i$ defined by
\begin{equation}
  B_i \defeq \sign(X_i)
  ~~ \mbox{and} ~~
  Z_i = \frac{e^\diffp + 1}{e^\diffp - 1}
  \cdot \begin{cases} B_i & \mbox{w.p.}~ \frac{e^\diffp}{e^\diffp + 1} \\
    -B_i & \mbox{otherwise}.
  \end{cases}
  \label{eqn:sign-flip-private-gauss}
\end{equation}
Then $\E[Z_i
  \mid X_i] = \sign(X_i)$, and for $\Phi(t) = \P(\normal(0, 1) \le t)$
the standard Gaussian CDF,
we have
$\E_\theta[Z_i] = 1 - 2 \Phi(-\theta / \sigma)$.
Letting $\wb{Z}_n = \frac{1}{n} \sum_{i = 1}^n Z_i$ be the average of
the $Z_i$, the estimator defined by solving $\wb{Z}_n = 1 - 2
\Phi(\what{\theta}_n / \sigma)$ is nearly unbiased. Projecting this
quantity onto $[-1, 1]$ gives the estimator
\begin{equation}
  \what{\theta}_n \defeq
  \mbox{Proj}_{[-1,1]}
  \left(\sigma \Phi^{-1}\left(\frac{1 - \wb{Z}_n}{2}\right)\right).
  \label{eqn:gaussian-estimator}
\end{equation}
This estimator satisfies the following, which
we prove in Appendix~\ref{sec:proof-gaussian-achievability}
via a Taylor expansion.
\begin{lemma}
  \label{lemma:gaussian-achievability}
  Let $\what{\theta}_n$ be the estimator~\eqref{eqn:gaussian-estimator} for
  the $\normal(\theta, \sigma^2)$ location family, where $\sigma^2 > 0$ is
  at least a constant.  Assume $|Z_i| \le b$ and $\E[Z_i] = 1 -
  2 \Phi(-\theta / \sigma)$.  For numerical constants $0 < c \le C <
  \infty$,
  \begin{equation*}
    |\what{\theta}_n - \theta|
    \le C \sqrt{\frac{b^2 \sigma^2 t}{n}}
    ~~ \mbox{w.p.~} \ge 1 - e^{-t}
    ~~~ \mbox{and} ~~~
    \E[|\what{\theta}_n - \theta|^2]
    \le C \frac{b^2 \sigma^2}{n}
    + C e^{-c n / b^2}.
  \end{equation*}
\end{lemma}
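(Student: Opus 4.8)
The plan is to reduce everything to controlling the fluctuations of $\wb Z_n$ around its mean $1 - 2\Phi(-\theta/\sigma)$ and then pushing that control through the smooth map $u \mapsto \sigma \Phi^{-1}((1-u)/2)$, composed with the projection onto $[-1,1]$. Since the $Z_i$ are i.i.d.\ (conditional on the $X_i$ they are independent, and marginally i.i.d.) with $|Z_i| \le b$ and $\E[Z_i] = 1 - 2\Phi(-\theta/\sigma) =: \mu_\theta$, Hoeffding's inequality gives $|\wb Z_n - \mu_\theta| \le b\sqrt{t/(2n)}$ with probability at least $1 - 2e^{-t}$ (absorb the factor of $2$ into constants, or use a one-sided bound on each tail). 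The key analytic fact is that $\Phi^{-1}$ is Lipschitz on any compact subinterval of $(0,1)$: since $\theta \in [-1,1]$ and $\sigma$ is bounded below by a constant, $\mu_\theta$ lies in a fixed closed subinterval $[a_0, 1-a_0] \subset (-1,1)$, so $(1-\mu_\theta)/2$ stays away from $\{0,1\}$ and the derivative $\frac{d}{du}\Phi^{-1}(u) = 1/\varphi(\Phi^{-1}(u))$ is bounded by a constant $C_0$ there.

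First I would establish the high-probability bound. On the event $\mc E_t = \{|\wb Z_n - \mu_\theta| \le b\sqrt{t/(2n)}\}$, if the perturbation is small enough that $(1-\wb Z_n)/2$ remains in the compact interval where $\Phi^{-1}$ is $C_0$-Lipschitz, then $|\sigma\Phi^{-1}((1-\wb Z_n)/2) - \theta| \le \sigma C_0 |\wb Z_n - \mu_\theta|/2 \le C\sqrt{b^2\sigma^2 t / n}$, and since $\theta \in [-1,1]$ the projection $\mathrm{Proj}_{[-1,1]}$ is $1$-Lipschitz and fixes $\theta$, so it only decreases the error. If the perturbation is large (so $\wb Z_n$ has left the good interval), then the projection still guarantees $|\what\theta_n - \theta| \le 2$, and one checks that in that regime $b\sqrt{t/n} \gtrsim 1$ anyway, so the bound $C\sqrt{b^2\sigma^2 t/n}$ holds trivially. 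This gives the first claim (after relabeling $t$ to absorb the factor $1/2$ and the constant from the union over two tails).

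For the expectation bound I would integrate the tail. Write $\E[|\what\theta_n - \theta|^2] = \int_0^\infty \P(|\what\theta_n-\theta|^2 > s)\,ds$. For $s$ up to some threshold of order $b^2\sigma^2$, the high-probability bound gives $\P(|\what\theta_n - \theta|^2 > s) \le 2\exp(-cns/(b^2\sigma^2))$, whose integral is $O(b^2\sigma^2/n)$. For $s$ beyond that threshold, we no longer have a useful subgaussian statement (the Lipschitz argument breaks once $\wb Z_n$ exits the good interval), but there $|\what\theta_n - \theta|^2 \le 4$ deterministically, and the event that $\wb Z_n$ has moved by a constant amount has probability at most $2e^{-cn/b^2}$ by Hoeffding; contributing at most $4 \cdot 2e^{-cn/b^2}$. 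Summing the two pieces yields $\E[|\what\theta_n - \theta|^2] \le C b^2\sigma^2/n + Ce^{-cn/b^2}$.

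The main obstacle is the non-uniform behavior of $\Phi^{-1}$ near the endpoints $0$ and $1$: a naive global Lipschitz bound is false, so one has to carefully split into the ``$\wb Z_n$ stays in the good compact interval'' regime (where the Taylor/Lipschitz estimate is clean) and the ``$\wb Z_n$ escapes'' regime (handled crudely by boundedness of the projected estimator together with a Hoeffding bound on a constant-sized deviation). Getting the constants in Assumption~A$'$ consistent — i.e.\ making sure ``$\sigma^2$ at least a constant'' is exactly what is needed to keep $\mu_\theta$ bounded away from $\pm 1$ uniformly over $\theta \in [-1,1]$ — is the only other point requiring care, and is the reason the $e^{-cn/b^2}$ term cannot be removed.
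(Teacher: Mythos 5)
Your proposal is correct and follows essentially the same route as the paper's proof: Hoeffding concentration for $\wb{Z}_n$, local smoothness of $\Phi^{-1}$ on a compact interval (guaranteed by $\theta \in [-1,1]$ and $\sigma$ bounded below), and handling the event that $\wb{Z}_n$ escapes that interval via the boundedness of the projected estimator, which produces the $e^{-cn/b^2}$ term. The only differences are cosmetic—you use a first-order Lipschitz bound plus tail integration where the paper does a second-order Taylor expansion and bounds second and fourth moments directly—and in fact your tail-integration argument shows the $e^{-cn/b^2}$ term could even be absorbed, so your closing remark that it ``cannot be removed'' is slightly too strong, though harmless for the stated bound.
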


To achieve an upper bound matching
Corollary~\ref{corollary:gaussian}, consider
the following non-interactive estimator, which provides
$\diffp$ of differential privacy. We consider the cases
$\diffp \le 1$ and $\diffp \ge 1$ separately.
\begin{enumerate}[i.]
\item In the case that $\diffp \ge 1$, choose
  $\floor{\diffp} \wedge d$ coordinates $j \in [d]$ uniformly at random. On
  each chosen coordinate $j$, release $Z_{i,j}$ via
  mechanism~\eqref{eqn:sign-flip-private-gauss} using privacy level
  $\diffp_0 = 1$, and use the estimator~\eqref{eqn:gaussian-estimator}
  applied to each coordinate;
  this mechanism is $\diffp$-differentially private, each
  coordinate (when sampled) takes values $|Z_{i,j}| \le \frac{e + 1}{e-1}$,
  and so the resulting vector $\what{\theta}_n \in \R^d$ satisfies
  \begin{equation*}
    \E[\ltwos{\what{\theta}_n - \theta}^2]
    \le \frac{C d \sigma^2}{n ((\floor{\diffp} \wedge d) / d)}
    \le C \min\left\{\frac{d^2}{n \diffp}, \frac{d}{n}\right\}.
  \end{equation*}
\item When $\diffp < 1$, we use the $\ell_\infty$-based mechanism of
  \citet{DuchiJoWa18} applied to the vector $\sgn(X_i) \in \{-1, 1\}^d$,
  which then releases a vector $Z_i \in C \sqrt{d / \diffp^2} \cdot \{-1,
  1\}^d$ for a numerical constant $C$ chosen to guarantee $\E[Z \mid
    \sgn(X)] = \sgn(X)$. Thus each coordinate of $Z_i$ satisfies the
  conditions of Lemma~\ref{lemma:gaussian-achievability}, and applying the
  inversion~\eqref{eqn:gaussian-estimator} to each coordinate independently
  yields $\E[\ltwos{\what{\theta}_n - \theta}^2] \le \frac{C d^2}{n
    \diffp^2}$. In this setting, the value $\diffpkl \le 2\diffp^2$ by
  inequality~\eqref{eqn:diffp-to-renyi}.
\end{enumerate}
  
\subsection{Sparse Estimation}
\label{sec:sparse-estimation}

We now turn to settings in which the coordinates exhibit dependence,
assuming individuals have $\diffp \le 1$-differential privacy
to make the discussion concrete. Consider the sparse Gaussian mean problem,
$X_i \simiid \normal(\theta, I_d)$ for $\lzero{\theta} = k$.  For
simplicity, let us consider that $k = 1$ and is known;
Corollary~\ref{corollary:gaussian} gives the minimax lower bound $d / (n
\diffp^2)$ under $\diffp$-differential privacy, which
\citet[Sec.~4.2.2]{DuchiJoWa18} achieve to within a logarithmic factor; the
non-private minimax risk~\citep{Johnstone13} is the exponentially smaller
$\frac{\log d}{n}$.
In the case of a (very) large sample size $n$, however, we observe a
different phenomenon: the non-private and private rates coincide.

Let us assume that $n \gg d$, and that $n
\to \infty$ as $d$ remains fixed. Let the sample be of
size $2n$, which we split. On the first
half, we further split the sample into $d$ bins of size $n/d$; for each of
these $d$ bins, we construct a 1-dimensional estimator of the mean of
coordinate $j$ via~\eqref{eqn:gaussian-estimator}, which gives us $d$
preliminary estimates $\what{\theta}_1^{\rm pre}, \ldots,
\what{\theta}_d^{\rm pre}$, each of which is $\diffp$-locally differentially
private. Lemma~\ref{lemma:gaussian-achievability}
shows that we can identify the non-zero coordinate of $\theta$ by $\what{j}
\defeq \argmax_j |\what{\theta}_j^{\rm pre}|$ with exponentially high
probability. Then, on the second half of the sample, we apply the
private estimator~\eqref{eqn:gaussian-estimator} to estimate the mean of
coordinate $\what{j}$. In combination, this yields an estimator
$\what{\theta}_{2n}$ that achieves
$\E[\ltwos{\what{\theta}_{2n} - \theta}^2] \le C / (n \diffp^2)$ for
large $n$, while the non-private analogue in this case has risk
$1/n$.

We have moved from an \emph{exponential} gap in the
dimension to one that scales only as $1 / \diffp^2$, as soon as $n$ is large
enough. This example is certainly stylized and relies on a particular
flavor of asymptotics ($n \to \infty$); we believe
this transformation from ``independent'' structure, with
risk scaling as $d/n$, to an identified structure with risk scaling as
$1/n$, merits more investigation.

\subsection{Correlated Data}
\label{sec:correlated-data}

We consider an additional stylized example of correlation.
Let $b \in \{\pm 1 \}^d$ be a \emph{known}
bit vector and assume the data $X_i = b \cdot B_i$ where $B_i \in \{\pm
1\}$, $P(B_i = 1) = p$ for an unknown $p$.  Without privacy,
$\what{p} = \frac{1 + \wb{B}_n}{2}$ achieves minimax
optimal $\ell_2^2$
risk $\frac{d}{n}$; the error is $d$ times that for the
one-dimensional quantity. In the private case, as $b \in \{\pm
1\}^d$ is known, the private channel for user $i$ may privatize only the bit
$B_i$ using randomized response, setting $Z_i$ as in
Eq.~\eqref{eqn:sign-flip-private-gauss}. Using
the private estimate $\what{p}_\diffp = \tfrac{1+\wb{Z}_n}{2}$ yields
$\E[(\what{p}_\diffp - p)^2] \le C / (n \min\{\diffp^2, 1\})$,
so
$\what{\theta}_n = b (2 \what{p}_\diffp - 1)$ has mean square
error
\begin{equation*}
  \E\left[\ltwos{\what{\theta}_n - b \cdot (2p-1)}^2\right]
  \le C d \cdot
  \E[(\what{p}_\diffp - p)^2]
  \le C \frac{d}{n \min\{\diffp^2, 1\}}.
\end{equation*}
In contrast to the case with independent coordinates
in Corollary~\ref{corollary:bernoulli}, here
the locally private estimator achieves
(to within a factor of $\diffp^{-2}$) the same risk as the
non-private estimator. This example is again special, but it suggests 
that leveraging correlation structures may close some of the
substantial gaps between private and non-private estimation
that prevent wider adoption of private estimators.



\newcommand{\altpackrv}{U}
\newcommand{\altpackval}{u}

\newcommand{\separation}{\mathsf{sep}}

\section{Lower bounds via information complexity}
\label{sec:lower-bounds}

We turn to stating and proving our main minimax lower bounds,
which build out of work by \citet{ZhangDuJoWa13_nips},
\citet{GargMaNg14}, and \citet{BravermanGaMaNgWo16}
on communication limits in estimation.



We begin with an extension of Assouad's method~\citep{Assouad83,Yu97}, which
transforms a $d$-dimensional estimation problem into one of testing
$d$ binary hypotheses, to information-limited settings. We consider a
family of distributions $\{P_\packval\}_{\packval \in \packset}$ indexed by
the hypercube $\packset = \{-1, 1\}^d$, where nature chooses $\packrv \in
\packset$ uniformly at random.  Conditional on $\packrv = \packval$, we
draw $\{X_i\}_{i=1}^n \simiid P_\packval$, from which we obtain the
observed (privatized) $\bZ$.  Letting $\theta_\packval =
\theta(P_\packval)$, we follow \citet{DuchiJoWa18} and say that $\packset$
induces a $\delta$-Hamming separation if there exists $\mathsf{\packval} :
\Theta \to \{-1, 1\}^d$ such that
\begin{equation}
  \label{eqn:delta-separation}
  L(\theta, \theta_\packval)
  \ge \delta \sum_{j = 1}^d \indic{\mathsf{\packval}_j(\theta)
    \neq \packval_j}.
\end{equation}

\begin{example}[Location families]
  \label{example:mean-estimation}
  Let $\mc{P}$ be a family of distributions, each specified
  by a mean $\theta(P)$, and for each
  $\packval \in \{-1, 1\}^d$ set $\theta(P_\packval) = \delta \cdot \packval$
  for some $\delta > 0$.
  Then for any symmetric $\ell : \R \to \R_+$ and
  loss of the form $L(\theta, \theta')
  = \sum_{j = 1}^d \ell(\theta_j - \theta'_j)$,
  we
  have $L(\theta, \theta_\packval)
  \ge \ell(\delta) \sum_{j = 1}^d \indic{\sgn(\theta_j) \neq \packval_j }$.
\end{example}

\ifcolt
\noindent
As our proof of
Corollary~\ref{corollary:logistic-lower-bound} demonstrates, similar
separations hold for convex risk minimization.
\else
Similar
separations hold for (strongly) convex risk minimization problems.

\begin{example}[Convex risk minimization]
  \label{example:convex-minimization-separation}
  Consider the problem of minimizing a convex risk functional
  $\risk_P(\theta) \defeq \E_P[\loss(\theta; X)]$, where $\loss$ is convex
  in its first argument and the expectation is over $X \sim P$.  Now, define
  $\theta(P) = \argmin_\theta \E[\loss(\theta; X)]$, and let $L(\theta,
  \theta(P)) = \risk_P(\theta) - \risk_P(\theta(P))$.  If $\risk_P$ is
  $\lambda$-strongly convex in a neighborhood of radius $r$ of $\theta(P)$,
  then a straightforward convexity argument~\cite{HiriartUrrutyLe93ab}
  yields
  \begin{equation*}
    \risk_P(\theta) - \risk_P(\theta(P))
    \ge \min\left\{\frac{\lambda}{2} \ltwo{\theta - \theta(P)}^2,
    \lambda r \ltwo{\theta - \theta(P)}\right\}.
  \end{equation*}
  Thus, if as in the previous example we can
  construct distributions $P$ such that
  $\theta(P_\packval) = \delta \cdot\packval \in \{-\delta, \delta\}^d$,
  where $\delta \le r$, then $L(\theta, \theta(P))$ induces
  a $\lambda \delta^2 / 2$-separation in Hamming metric.
\end{example}
\fi

Letting $\P_{+j}$ and $\P_{-j}$ be the marginal distributions of the
privatized $\bZ$
conditional on $\packrv_j = 1$ and $\packrv_j = -1$, respectively,
we have Assouad's method
(\citet[Lemma~1]{DuchiJoWa18} gives this form):
\begin{lemma}[Assouad's method]
  \label{lemma:assouad}
  Let the conditions of the previous paragraph hold and
  let $\packset$ induce a $\delta$-separation in Hamming metric.
  Then 
  \begin{equation*}
    \minimax_n(\theta(\mc{P}), L, \channel)
    \ge \delta \sum_{j = 1}^d \inf_{\what{\packrv}}
    \P(\what{\packrv}_j(\bZ) \neq \packrv_j)
    = \frac{\delta}{2}
    \sum_{j = 1}^d \left(1 - \tvnorm{\P_{+j} - \P_{-j}}\right).
  \end{equation*}
\end{lemma}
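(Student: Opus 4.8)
The plan is the classical reduction from estimation to a family of binary testing problems, carried out so that the information/privacy constraints enter only later (through the total variation terms). First I would lower bound the channel minimax risk by the Bayes risk under the uniform prior on $\packset = \{-1,1\}^d$: writing $\P_\packval$ for the law of $\bZ$ when $\packrv = \packval$ (that is, $X_i \simiid P_\packval$ passed through $\channel$), and $\theta_\packval = \theta(P_\packval)$,
\[
  \minimax_n(\theta(\mc{P}), L, \channel)
  \ge \inf_{\what{\theta}} \frac{1}{2^d} \sum_{\packval \in \packset}
  \E_{\P_\packval}\big[L(\what{\theta}(\bZ), \theta_\packval)\big].
\]

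Next I would invoke the $\delta$-Hamming separation~\eqref{eqn:delta-separation}. Given any estimator $\what{\theta}$, define the test $\what{\packrv}_j(\bZ)$ to be the $j$th coordinate of the separating map $\mathsf{\packval}(\cdot)$ evaluated at $\what{\theta}(\bZ)$, so that $L(\what{\theta}(\bZ), \theta_\packval) \ge \delta \sum_{j=1}^d \indic{\what{\packrv}_j(\bZ) \neq \packval_j}$. Taking expectations under $\P_\packval$, averaging over $\packval$, and exchanging the two finite sums gives
\[
  \minimax_n(\theta(\mc{P}), L, \channel)
  \ge \delta \sum_{j=1}^d \frac{1}{2^d} \sum_{\packval \in \packset}
  \P_\packval\big(\what{\packrv}_j(\bZ) \neq \packval_j\big)
  = \delta \sum_{j=1}^d \P\big(\what{\packrv}_j(\bZ) \neq \packrv_j\big),
\]
the last probability being under the joint law of $(\packrv, \bZ)$. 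Every estimator induces such a coordinatewise test, but not conversely, so taking $\inf_{\what{\theta}}$ on the left and using that an infimum of a sum dominates the sum of the infima yields $\minimax_n \ge \delta \sum_{j=1}^d \inf_{\what{\packrv}} \P(\what{\packrv}_j(\bZ) \neq \packrv_j)$, which is the first asserted inequality.

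For the equality with total variation, I would fix a coordinate $j$ and observe that, since $\packrv$ is uniform on the hypercube, $\packrv_j$ is uniform on $\{\pm 1\}$ and the conditional law of $\bZ$ given $\packrv_j = 1$ (resp.\ $\packrv_j = -1$) is precisely the mixture $\P_{+j}$ (resp.\ $\P_{-j}$) defined before the lemma. Guessing $\packrv_j$ from $\bZ$ is then exactly the problem of testing the two equally likely hypotheses $\P_{+j}$ versus $\P_{-j}$, whose Bayes error is given by the Neyman--Pearson/Le Cam two-point bound~\citep{LeCamYa00}: $\inf_{\what{\packrv}} \P(\what{\packrv}_j(\bZ) \neq \packrv_j) = \tfrac{1}{2}\big(1 - \tvnorm{\P_{+j} - \P_{-j}}\big)$. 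Summing over $j = 1, \ldots, d$ gives the equality and finishes the proof.

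There is no real obstacle here; the one point that needs care is that the per-coordinate decoupling is legitimate. The channel $\channel$ and the estimator may entangle all $d$ coordinates arbitrarily, but the \emph{additive} Hamming separation lets the $d$ testing problems separate inside the lower bound, and the marginals $\P_{+j}, \P_{-j}$ against which we test are the correct mixtures over the nuisance coordinates $\packrv_{-j}$ (this is why later bounds must control $\tvnorm{\P_{+j} - \P_{-j}}$, or an associated mutual information, for these mixture marginals rather than for individual $P_\packval$). All the subsequent work of the paper lies in bounding these total-variation/information terms under the privacy assumptions.
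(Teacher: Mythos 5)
Your proof is correct and follows the standard Assouad reduction (uniform prior over the hypercube, Hamming separation to induce coordinatewise tests, then Le Cam's two-point bound identifying the optimal test error with $\frac{1}{2}(1 - \tvnorm{\P_{+j} - \P_{-j}})$), which is exactly the argument behind the form the paper invokes from \citet[Lemma~1]{DuchiJoWa18} without reproving it. No gaps; your remark that the marginals $\P_{\pm j}$ are mixtures over the nuisance coordinates is the right point of care and matches how the paper uses the lemma downstream.
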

\noindent
Consequently, if we can show that the total variation distance
$\tvnorms{\P_{+j} - \P_{-j}}$ is small while the
$\delta$-separation~\eqref{eqn:delta-separation} is large for our family, we
have shown a strong lower bound.

\subsection{Strong data processing and information contraction}

To prove lower bounds via Lemma~\ref{lemma:assouad}, we build off of ideas
that originate from \citet{ZhangDuJoWa13_nips}, which
\citet{BravermanGaMaNgWo16} develop elegantly.
\citeauthor{BravermanGaMaNgWo16} show how \emph{strong data processing}
inequalities, which quantify the information loss in classical
information processing inequalities~\citep{CoverTh06}, extend from one
observation to multiple observations.  They use this to prove lower bounds
on the information complexity of distributed estimators, and
we show how their results imply strong lower bounds on
private estimation.  We first provide a definition.

\begin{definition}
  \label{definition:strong-data-processing}
  Let $\altpackrv \to X \to Z$ be a Markov chain, where $\altpackrv$ takes
  values $\{-1, 1\}$, and conditional on $\altpackrv = \altpackval$ we
  draw $X \sim P_\altpackval$, then draw $Z$ conditional on $X$. The
  \emph{strong data processing constant $\sdpi(P_{-1}, P_1)$} is the
  smallest $\beta \le 1$ such that
  for all distributions $X \to Z$,
  \begin{equation*}
    I(\altpackrv; Z) \le \beta I(X; Z).
  \end{equation*}
\end{definition}
\noindent
Many distributions satisfy strong data
processing inequalities; Gaussians do~\citep{BravermanGaMaNgWo16},
as do distributions with bounded likelihood ratio
$dP_1 / dP_{-1}$ (see Lemma~\ref{lemma:sdp-for-bounded-likelihood}
in Appendix~\ref{sec:proofs-corollaries}).

We consider families of distributions where the
coordinates of $X$ are independent, dovetailing with Assouad's method.
For $\packval \in \{-1, 1\}^d$, conditional
on $\packrv = \packval$ we assume that
\begin{equation}
  \label{eqn:product-generation}
  X \sim P_\packval
  = P_{\packval_1} \otimes P_{\packval_2} \otimes \cdots \otimes
  P_{\packval_d},
\end{equation}
a $d$-dimensional product distribution.
That is, conditional on $\packrv_j = \packval_j$,
the coordinates $X_{i,j}$ are i.i.d.\ and independent
of $\packrv_{\setminus j} = (\packrv_1, \ldots, \packrv_{j-1},
\packrv_{j+1}, \ldots, \packrv_d)$.
When we have the generation strategy~\eqref{eqn:product-generation},
we can use \citeauthor{GargMaNg14} and
\citeauthor{BravermanGaMaNgWo16}'s results
to prove the following lower bound.

\begin{theorem}
  \label{theorem:assouad-information}
  Let $\packrv \in \{-1, 1\}^d$ and consider the Markov chain $\packrv \to
  X_{\leq n} \to \bZ$, where conditional on $\packrv = \packval$ the $X_i$
  are i.i.d., follow the product
  distribution~\eqref{eqn:product-generation}, and $\bZ$
  follows the protocol of Fig.~\ref{fig:communication-scheme}.  Assume that
  for each coordinate $j$, the chain $\packrv_j \to X_{i,j}$ satisfies a
  strong data processing inequality with $\sdpi(P_{-1}, P_1) = \sdpi$,
  and $|\log \frac{dP_1}{dP_{-1}}| \le b$ for some $b < \infty$.  Then for
  any estimator $\what{\packrv}$,
  \begin{equation*}
    \sum_{j = 1}^d
    \P(\what{\packrv}_j(\bZ) \neq \packrv_j)
    \ge \frac{d}{2}
    \left(1 -
    \sqrt{\frac{7 (e^b + 1)}{d} \sdpi \cdot I(X_{\leq n}; \bZ \mid \packrv)}
    \right).
  \end{equation*}
\end{theorem}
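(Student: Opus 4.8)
The plan is to combine an Assouad-style reduction to $d$ binary hypothesis tests with a chain of information inequalities, and then to invoke the distributed strong data processing results of \citet{GargMaNg14} and \citet{BravermanGaMaNgWo16}. First I would observe that, for each coordinate $j$, the Bayes-optimal test of $\packrv_j = +1$ versus $\packrv_j = -1$ from $\bZ$ is the likelihood-ratio test between the marginals $\P_{+j}$ and $\P_{-j}$, so for any $\what{\packrv}$,
\begin{equation*}
  \sum_{j=1}^d \P(\what{\packrv}_j(\bZ) \neq \packrv_j)
  \ge \sum_{j=1}^d \inf_{\what{\packrv}_j} \P(\what{\packrv}_j(\bZ) \neq \packrv_j)
  = \half \sum_{j=1}^d \left(1 - \tvnorm{\P_{+j} - \P_{-j}}\right),
\end{equation*}
exactly the identity underlying Lemma~\ref{lemma:assouad}. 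It then suffices to bound $\sum_j \tvnorm{\P_{+j} - \P_{-j}}$ from above by $\sqrt{7(e^b+1)\,d\,\sdpi\,I(X_{\le n}; \bZ \mid \packrv)}$.

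Next I would pass from total variation to mutual information. Writing $\bar\P_j \defeq \half(\P_{+j} + \P_{-j})$ for the marginal law of $\bZ$ (valid since $\packrv_j$ is a uniform bit) and applying Pinsker's inequality to $\P_{\pm j}$ against $\bar\P_j$, together with $\tvnorm{\P_{\pm j} - \bar\P_j} = \half\tvnorm{\P_{+j}-\P_{-j}}$, gives
\begin{equation*}
  \tvnorm{\P_{+j} - \P_{-j}}^2
  \le \dkl{\P_{+j}}{\bar\P_j} + \dkl{\P_{-j}}{\bar\P_j}
  = 2\,I(\packrv_j; \bZ)
  \le 2\,I(\packrv_j; \bZ \mid \packrv_{\setminus j}),
\end{equation*}
where the last step uses that the coordinates $\packrv_1, \dots, \packrv_d$ are independent, so $I(\packrv_j; \packrv_{\setminus j}) = 0$ and hence $I(\packrv_j; \bZ \mid \packrv_{\setminus j}) = I(\packrv_j; \bZ, \packrv_{\setminus j}) \ge I(\packrv_j; \bZ)$. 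Cauchy--Schwarz then yields $\sum_j \tvnorm{\P_{+j}-\P_{-j}} \le (2d \sum_j I(\packrv_j; \bZ \mid \packrv_{\setminus j}))^{1/2}$, so the theorem reduces to the information-contraction estimate
\begin{equation*}
  \sum_{j=1}^d I(\packrv_j; \bZ \mid \packrv_{\setminus j})
  \le \tfrac{7}{2}(e^b+1)\,\sdpi\,I(X_{\le n}; \bZ \mid \packrv).
\end{equation*}

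The hard part is this last inequality, which is where interactivity must be handled. The approach is to condition on $\packrv_{\setminus j} = v_{\setminus j}$, so that the columns $X_{i,\setminus j}$ become random variables independent of $(\packrv_j, X_{\le n, j})$ that can be folded into the protocol's internal randomness; what remains, $\packrv_j \to (X_{1,j}, \dots, X_{n,j}) \to \bZ$, is exactly the $n$-player blackboard model of \citet{BravermanGaMaNgWo16}, with player $i$ holding $X_{i,j} \sim P_{\packrv_j}$ i.i.d.\ across $i$ and each chain $\packrv_j \to X_{i,j}$ contracting information by $\sdpi$. Their tensorization of the strong data processing inequality across the $T$ rounds and $n$ players (extending \citet{GargMaNg14, ZhangDuJoWa13_nips}) then yields $I(\packrv_j; \bZ \mid \packrv_{\setminus j}) \le C\,\sdpi \sum_{i=1}^n I(X_{i,j}; \bZ \mid \packrv)$ up to the factor $(e^b+1)$; the likelihood-ratio hypothesis $|\log\frac{dP_1}{dP_{-1}}| \le b$ is precisely what makes the reduction of the general source $P_{\pm 1}$ to a single bit go through---it lower-bounds the mass of the common component $\min\{dP_1, dP_{-1}\}$ by a constant multiple of $(1 + e^b)^{-1}$ and controls the change of measure in the hybrid argument that flips $\packrv_j$ from $-1$ to $+1$. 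Summing over $j$ and using that $X_1, \dots, X_n$ are conditionally independent given $\packrv$, so that $\sum_{i,j} I(X_{i,j}; \bZ \mid \packrv) \le I(X_{\le n}; \bZ \mid \packrv)$, closes the argument, the numerical constant $7$ absorbing $C$, the $(e^b+1)$ bookkeeping, and the factor $2$ from Pinsker and Cauchy--Schwarz.

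I expect the genuine difficulty to lie in making the reduction to the \citeauthor{BravermanGaMaNgWo16} framework airtight after conditioning on $\packrv_{\setminus j}$: one must check that their ``internal information cost'' recursion---which tracks each round's messages against the growing blackboard $B^{(t-1)}$---is unaffected by this conditioning and that the SDPI constant $\sdpi$ is not degraded, and one must track the $(e^b+1)$ factor uniformly in $j$ so that it survives the sum. The remaining ingredients (Neyman--Pearson, Pinsker, Cauchy--Schwarz, the chain rule, conditional independence of the $X_i$) are routine.
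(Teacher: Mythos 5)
Your proposal follows essentially the same route as the paper's proof: a per-coordinate Le Cam/Assouad reduction, the observation that the product sampling~\eqref{eqn:product-generation} yields the Markov chain $\packrv_j \to X_{\le n,j} \to \bZ$ with the remaining coordinates acting as protocol randomness independent of $(\packrv_j, X_{\le n,j})$, an appeal to \citet{BravermanGaMaNgWo16} for the resulting two-point interactive problem, and superadditivity of mutual information over the conditionally independent coordinates (the paper's Lemma~\ref{lemma:coordinate-complexity-to-global}). Your choice to condition on $\packrv_{\setminus j}$ rather than marginalize it out (as the paper does to obtain the chain~\eqref{eqn:simple-markov-for-info}) is fine, and if anything makes the ``fold the other coordinates into private randomness'' verification cleaner, since conditionally on $\packrv_{\setminus j}$ the $X_{i,\setminus j}$ are genuinely independent across players.

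The one substantive deviation is your Pinsker/mutual-information detour, and it is where the write-up has a gap. What \citeauthor{BravermanGaMaNgWo16} prove (Corollary~\ref{corollary:braverman} in the paper) is a bound on the \emph{squared Hellinger distance} between the transcript marginals, $\dhel^2(M_{-j}, M_{+j}) \le \tfrac{7}{2}(e^b+1)\sdpi \sum_i I(X_{i,j}; \bZ \mid \packrv_j)$, not the mutual-information inequality $I(\packrv_j; \bZ \mid \packrv_{\setminus j}) \le C\,\sdpi \sum_i I(X_{i,j}; \bZ \mid \packrv)$ that your argument asserts as ``their tensorization.'' You can recover such a statement—for a uniform bit, $I(\packrv_j;\bZ)$ is a Jensen--Shannon divergence and one can show $\mathrm{JS} \le 2\dhel^2$—but this conversion must be supplied, and once you carry the constants through (Pinsker gives $\tvnorm{\P_{+j}-\P_{-j}}^2 \le 2 I(\packrv_j;\bZ\mid\packrv_{\setminus j})$, and the JS-to-Hellinger step costs another factor of $2$), your route yields $\tfrac{d}{2}\bigl(1 - \sqrt{14(e^b+1)\sdpi\, I(X_{\le n};\bZ\mid\packrv)/d}\bigr)$ rather than the stated bound with constant $7$; the extra factors are not absorbed for free. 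The paper avoids both issues by never passing through the mutual information of $\packrv_j$: it bounds $\tvnorm{M_{-j}-M_{+j}} \le \sqrt{2}\,\dhel(M_{-j},M_{+j})$ and plugs the Hellinger bound directly into the Cauchy--Schwarz step, which is exactly how the constant $7$ arises. The remaining ingredients of your sketch (Neyman--Pearson, Cauchy--Schwarz, $\sum_{i,j} I(X_{i,j};\bZ\mid\packrv) \le I(X_{\le n};\bZ\mid\packrv)$ by conditional independence) match the paper and are correct.
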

\noindent
We defer the proof of Theorem~\ref{theorem:assouad-information} to
Appendix~\ref{sec:proof-assouad-information}.
Lemma~\ref{lemma:sdp-for-bounded-likelihood} to come shows that if $|\log
\frac{dP_1}{dP_{-1}}| \le b$, then $\beta(P_{-1}, P_1) \le 2 (e^b - 1)^2$,
often allowing easier application of the theorem.

By combining Theorem~\ref{theorem:assouad-information} with
Lemma~\ref{lemma:assouad}, we can prove strong lower bounds on minimax rates
of convergence if we can both (i) provide a strong data processing constant
for $P_{-1}$ and $P_1$ and (ii) bound the mutual information $I(X_{\leq n};
\bZ \mid \packrv)$.  We do both presently, but we note that
Theorem~\ref{theorem:assouad-information} relies strongly on the repeated
communication structure in Figure~\ref{fig:communication-scheme} (as does
Corollary~\ref{corollary:braverman}, \citeauthor{BravermanGaMaNgWo16}'s
Theorem~3.1 in the sequel). Similar techniques appear challenging in
centralized settings. Key to our applications of the theorem,
which rely
on i.i.d.\ sampling of the vector $X_{\le n}$ to provide bounds on mutual
information via privacy, is that
\citeauthor{BravermanGaMaNgWo16}'s results allow us to take the information
\emph{conditional} on $\packrv$; without this our results fail.

\subsection{Information bounds}

To apply Theorem~\ref{theorem:assouad-information}, the first step is to
develop information bounds on private communication.  We present our three
main lemmas that accomplish this, based on
Assumptions~\ref{assumption:general-epsilon-privacy},
\ref{assumption:general-epsilon-delta-privacy},
and~\ref{assumption:summed-approximate-privacy} here.
As in the development of our assumptions, we divide our information bounds
into two cases, depending on whether we work in the fully interactive
or compositional privacy
setting.


\subsubsection{Information bounds for fully interactive mechanisms}

In this section, we provide the two bounds on mutual information bounds that
give our results.
Before stating them, however, we give the corollary to
Theorem~\ref{theorem:assouad-information} that they immediately
imply.
\begin{corollary}
  \label{corollary:useful-general-one}
  Let the conditions of Theorem~\ref{theorem:assouad-information}
  hold and assume additionally that the channels
  $\channel$ satisfy
  Assumption~\ref{assumption:general-epsilon-privacy}
  or~\ref{assumption:general-epsilon-delta-privacy},
  setting $\diffpkl = \min\{9 \diffp, 75 \diffp^2\}$ in this case. Then
  \begin{equation*}
    \sum_{j = 1}^d
    \P(\what{\packrv}_j(\bZ) \neq \packrv_j)
    \ge \frac{d}{2}
    \left(1 -
    \sqrt{\frac{7 (e^b + 1)}{d} \sdpi n \diffpkl}
    \right).
  \end{equation*}
\end{corollary}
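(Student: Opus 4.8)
The plan is to reduce everything to Theorem~\ref{theorem:assouad-information}, which already delivers the stated inequality but with $n\diffpkl$ replaced by the conditional mutual information $I(X_{\le n};\bZ\mid\packrv)$; since the right-hand side is monotone in this information, it suffices to establish the bound $I(X_{\le n};\bZ\mid\packrv)\le n\diffpkl$ under each of the two privacy assumptions and then substitute.

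Consider first Assumption~\ref{assumption:general-epsilon-privacy} (equivalently, $\delta=0$ with $\diffpkl$ the KL-local-privacy parameter). Fix $\packval$ and condition on $\packrv=\packval$, so the $X_i$ are independent with $X_i\sim P_{\packval_i}$. The chain rule gives $I(X_{\le n};\bZ\mid\packrv=\packval)=\sum_i I(X_i;\bZ\mid X_{<i},\packrv=\packval)$, and expanding $I(X_i;(\bZ,X_{>i})\mid X_{<i},\packrv=\packval)$ in two ways, using conditional independence of the $X_i$, yields $I(X_i;\bZ\mid X_{<i},\packrv=\packval)\le I(X_i;\bZ\mid X_{\setminus i},\packrv=\packval)$. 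Writing the latter as an expected KL divergence and using that, conditional on $\packrv=\packval$, one has $\channel(\bZ\in\cdot\mid X_{\setminus i})=\E_{X_i'}[\channel(\bZ\in\cdot\mid X_{\setminus i},X_i')]$ for a fresh copy $X_i'\sim P_{\packval_i}$, convexity of KL divergence in its second argument bounds $I(X_i;\bZ\mid X_{\setminus i},\packrv=\packval)\le\E[\dkl{\channel(\bZ\in\cdot\mid X_{\le n})}{\channel(\bZ\in\cdot\mid X_{\le n}^{(i)})}\mid\packrv=\packval]$, where $X_{\le n}^{(i)}$ swaps the $i$th entry for $X_i'$. Summing over $i$ and invoking Definition~\ref{definition:kl-stability}, whose pointwise bound $\sum_i\dkl{\channel(\bZ\in\cdot\mid x_{\le n})}{\channel(\bZ\in\cdot\mid x_{\le n}^{(i)})}\le n\diffpkl$ survives taking expectations over the realizations, we get $I(X_{\le n};\bZ\mid\packrv=\packval)\le n\diffpkl$ for every $\packval$, and averaging over $\packrv$ finishes this case.

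For Assumption~\ref{assumption:general-epsilon-delta-privacy} the extra ingredient is a conversion of $(\diffp,\delta)$-differential privacy of the transcript $\bZ$ into an on-average KL bound with constant $\min\{9\diffp,75\diffp^2\}$. The mechanism: couple $\channel(\bZ\in\cdot\mid x_{\le n})$ and $\channel(\bZ\in\cdot\mid x_{\le n}^{(i)})$ so that they agree off an event of probability $O(\delta)$; the likelihood ratio can misbehave on that event, but since $\mc{X}$ is finite its logarithm is controlled by $O(\log|\mc{X}|)$, and the constraint $\delta\le\min\{\diffp,\diffp^2\}/\log^2|\mc{X}|$ (together with the remaining $\delta$-smallness conditions) is calibrated precisely so that this contribution is absorbed into the $\diffp^2$-scale term---this is exactly where the finite-domain hypothesis is used, and the precise form of the conditions fixes the constants $9$ and $75$. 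Granting this conversion, the argument of the previous paragraph applies verbatim with $\diffpkl=\min\{9\diffp,75\diffp^2\}$, giving $I(X_{\le n};\bZ\mid\packrv)\le n\diffpkl$ and hence the corollary.

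The reduction to Theorem~\ref{theorem:assouad-information} and the i.i.d.\ chain-rule-plus-convexity bound on the conditional mutual information are routine. I expect the genuine obstacle to be the $(\diffp,\delta)$ conversion: a $\delta$-probability ``privacy failure'' event can carry an unbounded likelihood ratio, so the relevant KL divergence is naively infinite, and it is the finite-domain assumption together with the carefully-tuned list of constraints on $\delta$ that both rescues the argument and pins down the numerical constants.
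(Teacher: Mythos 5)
Your reduction to Theorem~\ref{theorem:assouad-information} together with the information bound $I(X_{\le n};\bZ\mid\packrv)\le n\diffpkl$ is exactly the paper's plan, and your argument under Assumption~\ref{assumption:general-epsilon-privacy} is correct and essentially the paper's proof of Lemma~\ref{lemma:full-interactive-information-bound}; the paper applies joint convexity of the KL divergence after marginalizing $X_{>i}$ out of both arguments rather than first passing to $I(X_i;\bZ\mid X_{\setminus i},\packrv)$, which is a cosmetic difference.

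The $(\diffp,\delta)$ case, however, has a genuine gap, and it is not merely that you ``grant'' the conversion: the conversion you propose cannot hold. No finite parameter, let alone $\min\{9\diffp,75\diffp^2\}$, can turn $(\diffp,\delta)$-differential privacy of the transcript into the KL-stability bound of Definition~\ref{definition:kl-stability}: a channel that with probability $\delta$ writes $X_i$ to the blackboard in the clear and otherwise outputs a fixed symbol is $(\diffp,\delta)$-differentially private for every $\diffp\ge 0$ (and, for $\delta$ small enough, satisfies all the side conditions of Assumption~\ref{assumption:general-epsilon-delta-privacy}), yet $\dkl{\channel(\bZ\in\cdot\mid x_{\le n})}{\channel(\bZ\in\cdot\mid x_{\le n}^{(i)})}=\infty$ whenever $x_i\neq x_i'$, and the expectation over a fresh copy $X_i'$ is likewise infinite once $P$ has two support points. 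Finiteness of $\mc{X}$ does not repair this, because the object that blows up is a likelihood ratio of transcript distributions on $\mc{Z}^{nT}$, which has nothing to do with $|\mc{X}|$; your claim that its logarithm is ``controlled by $O(\log|\mc{X}|)$'' conflates it with a different quantity. The paper's Lemma~\ref{lemma:eps-delta-all-information} (via Lemmas~\ref{lemma:eps-delta-information} and~\ref{lemma:simplify-eps-delta-information}) instead bounds the mutual information directly: it writes $I(X_i;\bZ\mid X_{<i})$ as the expectation of the privacy-loss variable $\privloss_i(x_{\le i},z)=\log\frac{P(X_i=x_i\mid \bZ=z, X_{<i}=x_{<i})}{P(X_i=x_i)}$, a ratio of \emph{posteriors on $X_i$}; following \citet{RogersRoSmTh16} it shows that outside a low-probability bad event this loss is at most $6\diffp$ (Lemma~\ref{lemma:channel-conditions-nicely}), and it bounds the bad-event contribution by $P(\mathrm{bad})\log|\mc{X}|+h_2(P(\mathrm{bad}))$ through an entropy argument (Lemma~\ref{lemma:control-ungood-part})---that expectation-level bound is where the finite domain legitimately enters, and the constraints on $\delta$ are then tuned in Lemma~\ref{lemma:simplify-eps-delta-information} to yield the constants $9\diffp$ and $75\diffp^2$. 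So the viable route does not pass through a KL-stability statement that would let your first paragraph apply ``verbatim''; it requires bounding $I(X_{\le n};\bZ\mid\packrv)$ itself with this posterior-based privacy-loss machinery.
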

\noindent
The corollary is immediate from
Lemmas~\ref{lemma:full-interactive-information-bound}
and~\ref{lemma:eps-delta-all-information} to come.
We begin with the former, which extends
\citet[Prop.\ 7 or 4.3]{McGregorMiPiReTaVa10}
and simplifies
\citet[Prop.~3.4]{FeldmanSt18}.
\begin{lemma}
  \label{lemma:full-interactive-information-bound}
  Let the channel $\channel$ and transcript satisfy
  Assumption~\ref{assumption:general-epsilon-privacy}. Then for any
  Markov chain $\packrv \to X_{\le n} \to \bZ$, where the $X_i$ are
  independent conditional on $\packrv$, we have
  \begin{equation*}
    I(\bZ; X_{\le n} \mid \packrv) \le n \cdot \diffpkl.
  \end{equation*}
\end{lemma}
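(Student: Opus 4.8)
Since $I(\bZ;X_{\le n}\mid \packrv)=\E_{\packrv}\big[I(\bZ;X_{\le n}\mid \packrv=\packval)\big]$, it suffices to show $I(\bZ;X_{\le n}\mid\packrv=\packval)\le n\diffpkl$ for every fixed $\packval$; I will then only use that, conditional on $\packrv=\packval$, the $X_i$ are independent. First I apply the chain rule for mutual information,
\begin{equation*}
  I(\bZ;X_{\le n}\mid\packrv=\packval)=\sum_{i=1}^n I\big(\bZ;X_i\mid X_{<i},\packrv=\packval\big),
\end{equation*}
which holds with no further assumptions. The plan is to bound each summand by an expected per-coordinate KL divergence of the full transcript and then invoke Assumption~\ref{assumption:general-epsilon-privacy} once, at the very end.

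For a fixed value $x_{<i}$, condition further on $X_{<i}=x_{<i}$. Writing $A=X_i$ and $B=\bZ$ in the resulting conditional world, I use the standard variational identity $I(A;B)=\E_A[\dkl{P_{B\mid A}}{P_B}]$ together with $P_B=\E_{A'}[P_{B\mid A'}]$ for an independent copy $A'$ and joint convexity of $\dkl{\cdot}{\cdot}$ to get
\begin{equation*}
  I\big(\bZ;X_i\mid X_{<i}=x_{<i},\packrv=\packval\big)\le \E_{X_i,X_i'}\Big[\dkl{P(\bZ\in\cdot\mid x_{<i},X_i,\packval)}{P(\bZ\in\cdot\mid x_{<i},X_i',\packval)}\Big],
\end{equation*}
where $X_i'$ is an independent copy of $X_i$. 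Now I use conditional independence: given $\packval$ (and $x_{<i}$), the law of $X_{>i}$ does not depend on the value of coordinate $i$, so $P(\bZ\in\cdot\mid x_{<i},x_i,\packval)=\E_{X_{>i}\mid\packval}[\channel(\bZ\in\cdot\mid x_{<i},x_i,X_{>i})]$, and the same averaging distribution appears for $x_i'$. Applying joint convexity of KL once more to pull the expectation over $X_{>i}$ outside yields a bound in terms of $\dkl{\channel(\cdot\mid X_{\le n})}{\channel(\cdot\mid X_{\le n}^{(i)})}$, where $X_{\le n}^{(i)}$ is $X_{\le n}$ with its $i$-th coordinate replaced by $X_i'$.

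Taking expectations over $X_{<i}\mid\packval$, summing over $i$, and introducing an independent copy $X_{\le n}'$ of $X_{\le n}$ (so that $X_i'$ has the correct marginal for each $i$), I obtain
\begin{equation*}
  I(\bZ;X_{\le n}\mid\packrv=\packval)\le \E_{X_{\le n},X_{\le n}'}\Big[\sum_{i=1}^n \dkl{\channel(\bZ\in\cdot\mid X_{\le n})}{\channel(\bZ\in\cdot\mid X_{\le n}^{(i)})}\Big].
\end{equation*}
For \emph{every} realization of $(X_{\le n},X_{\le n}')$ the samples $X_{\le n}$ and $X_{\le n}^{(i)}$ differ only in coordinate $i$, so the inner sum is at most $n\diffpkl$ by Assumption~\ref{assumption:general-epsilon-privacy}; the expectation is therefore at most $n\diffpkl$, and averaging over $\packrv$ finishes the proof. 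I expect the only delicate point to be the two convexity steps of the middle paragraph—in particular, recognizing that conditional independence of the $X_i$ given $\packrv$ is exactly what makes the averaging measure over $X_{>i}$ independent of coordinate $i$, which is what licenses reducing each chain-rule term to a \emph{per-coordinate} KL of the full transcript; once that reduction is in place, the ``on average'' form of the privacy assumption is precisely strong enough (and applied only once, pointwise).
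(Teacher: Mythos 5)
Your proposal is correct and follows essentially the same route as the paper's proof: chain rule for mutual information, writing the conditional transcript distributions as mixtures over the unobserved coordinates (using conditional independence of the $X_i$ given $\packrv$), joint convexity of KL to reduce to per-coordinate divergences $\dkl{\channel(\cdot\mid x_{\le n})}{\channel(\cdot\mid x_{\le n}^{(i)})}$ for samples differing in one entry, and a single pointwise application of Assumption~\ref{assumption:general-epsilon-privacy}. The only difference is cosmetic ordering of the two convexity steps, so no further changes are needed.
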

\noindent
See Section~\ref{sec:proof-full-interactive-information-bound}
for the proof.

In the more complicated $(\diffp,\delta)$-differential privacy cases, we
require more care. Because of lack of space, we must defer the argument
to Appendix~\ref{sec:proof-eps-delta-all-information},
stating only the final conclusion here.
The lynchpin of our argument is based on the development of
\citet{RogersRoSmTh16}, who develop mutual information bounds for
discrete random variables under $(\diffp,\delta)$-differential privacy.
\begin{lemma}
  \label{lemma:eps-delta-all-information}
  Let the private variables $\bZ$ satisfy
  Assumption~\ref{assumption:general-epsilon-delta-privacy}. Then
  \begin{equation*}
    I(X_{\le n}; \bZ \mid \packrv)
    \le n \min\left\{9 \diffp, 75 \diffp^2 \right\}.
  \end{equation*}
\end{lemma}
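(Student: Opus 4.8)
The plan is to reduce the conditional mutual information $I(X_{\le n}; \bZ \mid \packrv)$ to a sum of per-example, per-round terms, each of which can be controlled by the fact that the channel generating $Z_i^{(t)}$ is $(\diffp,\delta)$-differentially private in $X_i$. First I would fix $\packrv = \packval$ throughout (everything that follows is uniform in $\packval$, so conditioning on $\packrv$ just averages these bounds), so that the $X_i$ are independent. The natural decomposition is the chain rule over rounds and individuals: writing $\bZ$ as the ordered tuple $(Z_1^{(1)},\dots,Z_n^{(1)},Z_1^{(2)},\dots)$, we have
\begin{equation*}
  I(X_{\le n}; \bZ \mid \packrv)
  = \sum_{t=1}^T \sum_{i=1}^n I\big(X_{\le n}; Z_i^{(t)} \mid \intoit{Z}, \packrv\big),
\end{equation*}
and since $Z_i^{(t)}$ depends on $(X_{\le n}, \intoit{Z})$ only through $(X_i, \intoit{Z})$ — this is exactly the channel structure of Fig.~\ref{fig:communication-scheme} — each summand equals $I(X_i; Z_i^{(t)} \mid \intoit{Z}, \packrv)$ by the data-processing/independence structure (here is where conditional independence of the $X_i$ given $\packrv$ is used, to drop $X_{\setminus i}$). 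So the problem is reduced to bounding, for a fixed incoming transcript value $\intoit{z}$, the one-shot quantity $I(X_i; Z_i^{(t)})$ for an $(\diffp,\delta)$-DP channel.

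The core of the argument is therefore the one-shot bound: if $Z \sim \channel(\cdot \mid X)$ is $(\diffp,\delta)$-differentially private and $X$ is arbitrary, then $I(X;Z) \le \min\{9\diffp, 75\diffp^2\}$ provided $\delta$ is small enough relative to $\diffp$ (and the domain is finite, with $|\mc{X}|$ entering through the constraints in Assumption~\ref{assumption:general-epsilon-delta-privacy}$'$). For the $\delta = 0$ case this is classical: pure $\diffp$-DP gives $I(X;Z) \le \min\{\diffp, \diffp^2/\log 2\}$-type bounds via inequality~\eqref{eqn:diffp-to-renyi} and the fact that mutual information is an average of KL-divergences each bounded by the worst-case KL between channel outputs. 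For $\delta > 0$ the clean divergence bound fails on the bad event, and this is where I would invoke \citet{RogersRoSmTh16}: they show that for discrete $X$, an $(\diffp,\delta)$-DP channel still satisfies a mutual-information bound of order $\diffp$ (resp.\ $\diffp^2$ in the high-privacy regime), with correction terms that are controlled by $\delta \log(1/\delta)$, $\delta\log|\mc{X}|$, and similar quantities — precisely the expressions that appear in the side conditions of Assumption~\ref{assumption:general-epsilon-delta-privacy}$'$. So the technical work is to quote/adapt their one-shot bound, check that under those side conditions all the $\delta$-dependent correction terms are absorbed into the leading $\diffp$ or $\diffp^2$ term (this is why the constants degrade from the $\min\{\diffp,\diffp^2/\log 2\}$ of the pure case to $\min\{9\diffp,75\diffp^2\}$), and handle the regime split at $\diffp \approx 1/6$ (where the extra constraints $\delta \le \diffp^5/(64\log^2|\mc{X}|)$ and $\delta\log^2(\diffp/\delta) \le \diffp^5/16$ kick in to make the $\diffp^2$ bound hold).

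Assembling: summing $I(X_i; Z_i^{(t)} \mid \intoit{Z}=\intoit{z}) \le \min\{9\diffp,75\diffp^2\}$ over $t$ and $i$ gives a bound of $nT \cdot \min\{9\diffp,75\diffp^2\}$, which is \emph{not} what we want — we need $n \cdot \min\{9\diffp,75\diffp^2\}$, with no $T$. The resolution is that the relevant privacy budget is on the \emph{whole transcript} $\bZ$, not on each round separately: Assumption~\ref{assumption:general-epsilon-delta-privacy}$'$ says $\bZ$ (the full $\mc{Z}^{nT}$-valued object) is $(\diffp,\delta)$-DP in each $X_i$. So rather than a round-by-round chain rule, I would apply the one-shot bound directly to the map $X_i \mapsto \bZ$ with $X_{\setminus i}$ fixed (legitimate since the $X_i$ are independent given $\packrv$): $I(X_i; \bZ \mid X_{\setminus i}, \packrv) \le \min\{9\diffp,75\diffp^2\}$, and then the chain rule over $i$ — using independence of the $X_i$ given $\packrv$ so that $I(X_{\le n};\bZ\mid\packrv) = \sum_i I(X_i;\bZ\mid X_{<i},\packrv) \le \sum_i I(X_i;\bZ\mid X_{\setminus i},\packrv)$, the last inequality because conditioning on the independent extra variables $X_{(i,n]}$ does not decrease this particular mutual information (cf.\ the argument in Lemma~\ref{lemma:full-interactive-information-bound}) — yields the claimed $n\min\{9\diffp,75\diffp^2\}$. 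The main obstacle is the one-shot $(\diffp,\delta)$-DP mutual information bound and the bookkeeping that the many side conditions on $(\diffp,\delta,|\mc{X}|)$ are exactly what is needed to subsume the error terms; the finiteness of $\mc{X}$ is essential here precisely because, as the footnote after Corollary~\ref{corollary:sparse-gaussian} notes, on the $\delta$-probability failure event an unbounded-alphabet channel could leak infinite information.
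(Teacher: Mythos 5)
Your plan is essentially the paper's own proof: the paper likewise reduces $I(X_{\le n};\bZ\mid\packrv)$ via the chain rule over individuals (using conditional independence of the $X_i$ given $\packrv$) and then runs the \citet{RogersRoSmTh16} privacy-loss/good-event analysis for the $(\diffp,\delta)$-private transcript, with the side conditions of Assumption~\ref{assumption:general-epsilon-delta-privacy} doing exactly the bookkeeping you describe to absorb the $\delta$-dependent corrections into $9\diffp$ and $75\diffp^2$ (Lemmas~\ref{lemma:eps-delta-information} and~\ref{lemma:simplify-eps-delta-information}). The only minor difference is that the paper conditions each term on the prefix $X_{<i}$ alone and bounds the expected privacy loss of the whole transcript directly, rather than additionally conditioning on $X_{\setminus i}$ to obtain a one-shot channel as you do (both steps are valid); the quantitative core yielding the constants $9$ and $75$ is precisely the part you defer to ``adapting RRST,'' which the paper works out explicitly in the appendix.
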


\subsubsection{Information bounds for compositional mechanisms}
\label{sec:information-compositional-bounds}

The main result of the section, which follows by combining
Theorem~\ref{theorem:assouad-information} with the lemmas to come, gives the
following corollary.
\begin{corollary}
  \label{corollary:useful-one}
  Let the conditions of Theorem~\ref{theorem:assouad-information}
  hold and assume additionally that the channels $\channel$ satisfy
  Assumption~\ref{assumption:summed-approximate-privacy}. Then
  \begin{equation*}
    \sum_{j = 1}^d
    \P(\what{\packrv}_j(\bZ) \neq \packrv_j)
    \ge \frac{d}{2}
    \left(1 -
    \sqrt{\frac{7 (e^b + 1)}{d} \sdpi n \diffpkl}
    - \totaldelta
    \right).
  \end{equation*}
\end{corollary}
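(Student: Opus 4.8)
The plan is to reduce the compositional $(\diffp,\delta)$-private mechanism to a \emph{pure} ($\delta=0$) differentially private compositional mechanism at a total-variation cost of $\totaldelta$, and then apply the machinery already assembled for the pure case, namely Theorem~\ref{theorem:assouad-information} fed by the KL information bound of Lemma~\ref{lemma:full-interactive-information-bound}. Concretely, I would first invoke Lemma~\ref{lemma:tv-eps-delta}: for each $i,t$ and each realization $\intoit{z}$ of the incoming transcript, the $(\diffp_{i,t}(\intoit{z}),\delta_{i,t}(\intoit{z}))$-differentially private channel $\channel_{i,t}(\cdot \mid \cdot, \intoit{z})$ lies within total variation $\delta_{i,t}(\intoit{z})$ of some pure $\diffp_{i,t}(\intoit{z})$-differentially private channel $\widetilde\channel_{i,t}(\cdot \mid \cdot, \intoit{z})$. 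Running the $\widetilde\channel_{i,t}$ in the same blackboard protocol of Figure~\ref{fig:communication-scheme} produces a surrogate transcript $\widetilde\bZ$; coupling the two protocols round by round and user by user and union-bounding over the events that some channel's output differs from its surrogate yields a joint coupling of $(\packrv, X_{\le n}, \bZ, \widetilde\bZ)$ with $\P(\bZ \neq \widetilde\bZ) \le \sum_{i,t} \E[\delta_{i,t}(\intoit{Z})] \le \totaldelta$ (here the hypothesis $\totaldelta \le \half$ is used, and the \emph{compositional} structure is what makes this accumulation valid).

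Next I would bound the information carried by the surrogate. Each $\widetilde\channel_{i,t}$ is $\diffp_{i,t}(\intoit{z})$-differentially private, hence $\min\{\diffp_{i,t}^2(\intoit{z})/\log 2,\ \diffp_{i,t}(\intoit{z})\}$-KL private conditionally on the past (the $\diffp$-to-KL conversion of~\eqref{eqn:diffp-to-renyi}, cf.~\eqref{eqn:diffp-to-average-diffp}), so $\widetilde\channel$ satisfies a $1$-compositional KL bound of the form~\eqref{eqn:summed-kl-privacy} with $n^{-1}\sum_{i,t}\E[\min\{\diffp_{i,t}^2(\intoit{Z})/\log 2, \diffp_{i,t}(\intoit{Z})\}] \le \diffpkl$ by Assumption~\ref{assumption:summed-approximate-privacy}. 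This implies $\widetilde\bZ$ meets Assumption~\ref{assumption:general-epsilon-privacy} with budget $\diffpkl$ (by the chain rule for KL-divergence, as in the text), so Lemma~\ref{lemma:full-interactive-information-bound} gives $I(X_{\le n}; \widetilde\bZ \mid \packrv) \le n\diffpkl$. Feeding this into Theorem~\ref{theorem:assouad-information} yields, for every estimator $\what{\packrv}$, $\sum_{j=1}^d \P(\what{\packrv}_j(\widetilde\bZ) \neq \packrv_j) \ge \frac{d}{2}\big(1 - \sqrt{\tfrac{7(e^b+1)}{d}\sdpi\, n\diffpkl}\big)$. Finally I would transfer this back to $\bZ$: the map $\bZ \mapsto \frac{1}{d}\sum_{j=1}^d \indic{\what{\packrv}_j(\bZ) \neq \packrv_j}$ takes values in $[0,1]$, so its expectations under the laws of $(\packrv,\bZ)$ and $(\packrv,\widetilde\bZ)$ differ by at most $\P(\bZ \neq \widetilde\bZ) \le \totaldelta$; rearranging and multiplying through by $d$ gives the claimed bound (equivalently, bound $\tvnorm{\P_{+j}-\P_{-j}} \le \tvnorm{\widetilde\P_{+j}-\widetilde\P_{-j}} + \tvnorm{\P_{+j}-\widetilde\P_{+j}} + \tvnorm{\P_{-j}-\widetilde\P_{-j}}$ inside Lemma~\ref{lemma:assouad} and sum over $j$). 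If the square-root-plus-$\totaldelta$ term exceeds $1$ the inequality is vacuous, so nothing is lost.

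The main obstacle is the first step: coupling the \emph{adaptive} protocols, since $\delta_{i,t}$ and $\diffp_{i,t}$ depend on $\intoit{Z}$, which is itself altered once surrogate channels are introduced, so one cannot swap all channels simultaneously. I would handle this with a hybrid argument, substituting channels one at a time in the protocol's natural (round, user) order and arguing that each substitution perturbs the law of the full transcript by at most the expected $\delta_{i,t}$ \emph{evaluated along the already-partially-modified protocol}, with these expectations still controlled because each substitution moves probability mass by only $\delta_{i,t}$. This is exactly the argument the footnote flags as breaking in the fully interactive regime—where a single privacy failure can leak unboundedly and hence mutual-information bounds fail—and it is what distinguishes Corollary~\ref{corollary:useful-one} from the $(\diffp,\delta)$ component of Corollary~\ref{corollary:useful-general-one}. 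A secondary, routine point is verifying that the pure surrogate inherits the compositional KL hypothesis~\eqref{eqn:summed-kl-privacy} with the \emph{same} budget $\diffpkl$, which is immediate from~\eqref{eqn:diffp-to-renyi} applied conditionally together with the budget constraint in Assumption~\ref{assumption:summed-approximate-privacy}.
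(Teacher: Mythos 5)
Your proposal is correct and follows essentially the paper's own route: replace each $(\diffp_{i,t},\delta_{i,t})$-private channel by a pure $\diffp_{i,t}$-differentially private surrogate, control the resulting perturbation of the transcript law by $\totaldelta$ via a hybrid/telescoping argument over the $(i,t)$ ordering (this is exactly the content and proof of Lemma~\ref{lemma:tv-eps-delta}), bound the surrogate's information $I(X_{\le n}; \bZ \mid \packrv)$ by $n \diffpkl$ through the $\min\{\diffp, \diffp^2/\log 2\}$ conversion and Lemma~\ref{lemma:full-interactive-information-bound}, then apply Theorem~\ref{theorem:assouad-information} and subtract $\totaldelta$. The only caveat is bookkeeping: the round-by-round maximal-coupling variant gives transcript total variation at most $\totaldelta$ (hence an additive loss of $2\totaldelta$ after the triangle inequality), whereas your parenthetical route---invoking Lemma~\ref{lemma:tv-eps-delta} coordinatewise inside Lemma~\ref{lemma:assouad}, whose proof actually yields variation $\totaldelta/2$ per marginal---recovers the stated $-\totaldelta$ constant, matching the paper.
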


The corollary follows from
Lemma~\ref{lemma:full-interactive-information-bound} once we subtract
$\totaldelta$ and use the following approximation guarantee, which shows
that $(\diffp,\delta)$ channels are nearly differentially private.

\begin{lemma}
  \label{lemma:tv-eps-delta}
  Let Assumption~\ref{assumption:summed-approximate-privacy} hold
  on the channel $\channel$.
  Let $\P_{-1}$ and $\P_1$ be the marginal distributions of $\bZ$ under the
  communication model of Fig.~\ref{fig:communication-scheme} with
  channel $\channel$ and base distributions $P_{-1}$ and $P_1$ on
  $X_{\le n}$, so that $\P_v(S) = \int \channel(S \mid x_{\le n})
  dP_v(x_{\le n})$.
  For each $i, t$ there exist channels $\wb{\channel}(Z_i^{(t)}
  \in \cdot \mid x_i, \intoit{z})$ from $X_i$ to $Z_i^{(t)}$, conditional on
  $\intoit{z}$, where each channel is
  $\diffp_{i,t}(\intoit{z})$-differentially private. The induced
  marginal distributions $\wb{\P}_{-1,1}$ under the channels
  $\wb{\channel}$ satisfy
  \begin{equation*}
    \tvnorm{\P_{-1} - \P_1} \le \tvnorm{\wb{\P}_{-1} - \wb{\P}_{1}}
    + \totaldelta.
  \end{equation*}
\end{lemma}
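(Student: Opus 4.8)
The plan is to swap, channel by channel, the given $(\diffp_{i,t},\delta_{i,t})$-private system for one built entirely from \emph{pure} (that is, $\delta=0$) differentially private channels, and to charge the total cost of these swaps to $\totaldelta$ via a hybrid argument. Write $\wb\P_v$ for the law of the transcript $\bZ$ produced by running the protocol of Figure~\ref{fig:communication-scheme} with base distribution $P_v$ on $X_{\le n}$ but with every channel $\channel_{i,t}(\cdot\mid\cdot,\intoit{z})$ replaced by a $\diffp_{i,t}(\intoit{z})$-differentially private channel $\wb\channel_{i,t}(\cdot\mid\cdot,\intoit{z})$. By the triangle inequality,
\[
  \tvnorm{\P_{-1}-\P_1}\le\tvnorm{\P_{-1}-\wb\P_{-1}}+\tvnorm{\wb\P_{-1}-\wb\P_1}+\tvnorm{\wb\P_1-\P_1},
\]
so it suffices to construct such channels $\wb\channel_{i,t}$ with $\tvnorm{\P_v-\wb\P_v}$ at most a constant multiple of $\totaldelta$ for $v\in\{-1,1\}$; the sharp estimate below gives the constant $\half$, hence the statement.

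The first ingredient is a \emph{dimension-free single-channel approximation}: any local $(\diffp,\delta)$-differentially private channel $\channel(\cdot\mid x)$ lies within total variation $O(\delta)$, uniformly over the input $x$, of a pure $\diffp$-differentially private channel. I would prove this by fixing an arbitrary reference input $x_0$, forming the Lebesgue decomposition $\channel(\cdot\mid x)=g_x\cdot\channel(\cdot\mid x_0)+\channel^{\perp}_x$ of each output law with respect to $\channel(\cdot\mid x_0)$, and setting $\wb\channel(\cdot\mid x)$ to be the renormalization of $\max\{e^{-\diffp},\min\{g_x,e^{\diffp}\}\}\cdot\channel(\cdot\mid x_0)$. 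Applying the $(\diffp,\delta)$-privacy condition to the pairs $(x,x_0)$ and $(x_0,x)$ bounds, respectively, the total mass of the singular part $\channel^{\perp}_x$, the clipped excess $\int(g_x-e^{\diffp})^+\,d\channel(\cdot\mid x_0)$, and the clipped deficit $\int(e^{-\diffp}-g_x)^+\,d\channel(\cdot\mid x_0)$, each by $O(\delta)$; together with the renormalization error this gives $\tvnorm{\channel(\cdot\mid x)-\wb\channel(\cdot\mid x)}=O(\delta)$, while the clipped density lies in $[e^{-\diffp},e^{\diffp}]$ up to a normalizing constant that itself lies in $[e^{-\diffp},e^{\diffp}]$, so $\wb\channel$ is pure differentially private. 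A sharper accounting — the tight form of the equivalence between approximate and pure differential privacy under total-variation perturbations — yields precisely the parameter $\diffp_{i,t}(\intoit{z})$ and the total-variation bound $\delta_{i,t}(\intoit{z})$. One applies this to $\channel_{i,t}(\cdot\mid\cdot,\intoit{z})$ for each fixed $\intoit{z}$, invoking a measurable selection so that $\wb\channel_{i,t}$ is a legitimate channel, measurable in $\intoit{z}$.

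The second ingredient is a \emph{hybrid (telescoping) argument}. Order the $nT$ slots $(i,t)$ lexicographically in $(t,i)$; this ordering is consistent with the protocol's dependency structure, since the incoming messages $\intoit{Z}=(Z^{(t)}_{<i},B^{(t-1)})$ feeding slot $(i,t)$ are produced only by earlier slots. Let $\P_v^{(k)}$ denote the transcript law when the first $k$ slots use the original $\channel$ and the remaining slots use $\wb\channel$, so $\P_v^{(0)}=\wb\P_v$ and $\P_v^{(nT)}=\P_v$. Replacing slot $k=(i,t)$ alters nothing upstream of it, so the data-processing inequality for total variation gives $\tvnorm{\P_v^{(k)}-\P_v^{(k-1)}}\le\E\big[\tvnorm{\channel_{i,t}(\cdot\mid X_i,\intoit{Z})-\wb\channel_{i,t}(\cdot\mid X_i,\intoit{Z})}\big]=O\big(\E[\delta_{i,t}(\intoit{Z})]\big)$, where — crucially — the law of $\intoit{Z}$ in this expectation coincides with its law under $\P_v$, because $\intoit{Z}$ depends only on the unchanged earlier slots. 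Summing over $(i,t)$ and invoking the first budget constraint of Assumption~\ref{assumption:summed-approximate-privacy} bounds $\tvnorm{\P_v-\wb\P_v}$ by $O(\totaldelta)$ for $v\in\{-1,1\}$, which with the displayed triangle inequality gives the lemma.

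The main obstacle is the single-channel approximation with the right guarantee: clipping against the pointwise infimum $\inf_{x'}d\channel(\cdot\mid x')/d\mu$ over \emph{all} inputs would cost a factor $|\mc{X}|$ in total variation (the reason the fully interactive $(\diffp,\delta)$ case needs finite $\mc{X}$), so it is essential that the clipping uses a single fixed reference $x_0$; and pinning the privacy level of $\wb\channel$ to exactly $\diffp_{i,t}$ and the perturbation to exactly $\delta_{i,t}$ — rather than constant multiples — requires the sharp version of the approximate-to-pure reduction. A secondary, purely technical point is the measure theory (existence of regular conditional versions and of a measurable selection of $\wb\channel_{i,t}$ in $\intoit{z}$), together with verifying that the $\delta$-budget of Assumption~\ref{assumption:summed-approximate-privacy} is in force under each base distribution $P_v$ entering the expectations above.
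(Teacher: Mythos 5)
Your global architecture is the same as the paper's: a triangle inequality through the pure-DP surrogate marginals, followed by a slot-by-slot swap of $\channel_{i,t}$ for $\wb{\channel}_{i,t}$ in which the $k$th swap is charged $\E[\delta_{i,t}(\intoit{Z})]$ because everything upstream of slot $(i,t)$ is unchanged (the paper implements exactly this hybrid as a telescoping identity on products of p.m.f.s, after first reducing to countable $\mc{Z}$ via the finite-rectangular-partition characterization of total variation). That part of your proposal, including the observation that the law of $\intoit{Z}$ in the $k$th hybrid coincides with its law under $\P_v$, is sound and matches Appendix C.1.

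The genuine gap is in the single-channel approximate-to-pure reduction, which is the real content of the lemma (the paper's Lemma~\ref{lemma:regular-private-densities}). Your construction---Lebesgue-decompose $\channel(\cdot\mid x)$ against one fixed reference $\channel(\cdot\mid x_0)$, clip the density $g_x$ to $[e^{-\diffp},e^{\diffp}]$, drop the singular part, renormalize---only controls likelihood ratios \emph{against $x_0$}. Between two non-reference inputs it does not yield $\diffp$-DP: if at some $z$ one has $g_{x_1}(z)=e^{2\diffp}$ and $g_{x_2}(z)=e^{-\diffp/2}$ (a configuration the original $(\diffp,\delta)$ guarantee permits on a small-mass set), then after clipping the ratio of the two new densities at $z$ is $e^{3\diffp/2}>e^{\diffp}$, and pure DP tolerates no exceptional set, however small. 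So your $\wb{\channel}$ is only (roughly) $2\diffp_{i,t}$-DP, whereas the lemma requires exactly $\diffp_{i,t}(\intoit{z})$-DP; the ``sharp form of the approximate-to-pure equivalence'' you defer to is precisely the missing step, and it also supplies the per-channel total-variation constant ($\half[\delta/(1+e^{\diffp})+\delta/(1+e^{\diffp}-\delta)]$, hence at most $\delta_{i,t}$) needed to end with $\totaldelta$ rather than $O(\totaldelta)$. The paper proves it by the pairwise rebalancing of \citet{DworkRoVa10}: for each pair $(x,x')$ it redistributes the overflow mass on the sets where the ratio exceeds $e^{\diffp}$ so that the pair becomes exactly $\diffp$-indistinguishable, and then---rather than invoking an abstract measurable selection---restores measurability and removes the dependence on the comparison point by averaging the pairwise construction over an arbitrary fixed probability measure $\lambda$ on $\mc{X}$, which produces a regular conditional probability. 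Without this (or an equivalent) construction, your argument establishes the lemma only with $\diffp_{i,t}$ degraded by a factor of two and with unspecified constants in place of $\totaldelta$.
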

\noindent
The most challenging part of Lemma~\ref{lemma:tv-eps-delta}
is to establish the existence of regular conditional probabilities
$\wb{\channel}$ (i.e., veryifying measurability) that are close to
$\channel$; we do so in Appendix~\ref{sec:proof-tv-eps-delta}.

\ifcolt
\else

\section{Proofs of Corollaries}
\label{sec:proofs-corollaries}

Before proving the corollaries from Section~\ref{sect:minimax}, we present
one lemma that will be useful throughout.  It is similar to, but simpler
than, a result of \citet[Lemma 8]{ZhangDuJoWa13_nips}.
\begin{lemma}
  \label{lemma:sdp-for-bounded-likelihood}
  Let $V \to X \to Z$, where
  $X \sim P_v$ conditional on $V = v$. If
  $|\log \frac{dP_v}{dP_{v'}}| \le \alpha$ for all $v, v'$, then
  \begin{equation*}
    I(V; Z)
    \le 4 (e^\alpha - 1)^2
    \E_Z[\tvnorm{P_X(\cdot \mid Z) - P_X}^2]
    \le 2 (e^\alpha - 1)^2 I(X; Z).
  \end{equation*}
\end{lemma}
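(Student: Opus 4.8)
The plan is to handle the two inequalities separately; only the first needs any work. The second is immediate from Pinsker's inequality: writing $I(X;Z) = \E_Z[\dkl{P_X(\cdot \mid Z)}{P_X}]$ and using $\tvnorm{P_X(\cdot \mid z) - P_X}^2 \le \frac{1}{2}\dkl{P_X(\cdot \mid z)}{P_X}$ for each $z$, taking expectations over $Z$ gives $\E_Z[\tvnorm{P_X(\cdot \mid Z) - P_X}^2] \le \frac{1}{2} I(X;Z)$, hence $4(e^\alpha - 1)^2 \E_Z[\tvnorm{P_X(\cdot\mid Z) - P_X}^2] \le 2(e^\alpha-1)^2 I(X;Z)$. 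For the first inequality I would follow \citet[Lemma~8]{ZhangDuJoWa13_nips}, the bounded-likelihood-ratio hypothesis making the argument cleaner.

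Let $\pi$ be the law of $V$, $P \defeq P_X = \int P_v\,d\pi(v)$ the marginal of $X$, $\mu$ the law of $Z$, and $r_v \defeq dP_v/dP$. Since $P$ is a $\pi$-mixture of the $P_{v'}$, the hypothesis $|\log dP_v/dP_{v'}| \le \alpha$ forces $r_v \in [e^{-\alpha}, e^\alpha]$, hence $\|r_v - 1\|_\infty \le e^\alpha - 1$. Let $Q_z \defeq P_X(\cdot \mid Z=z)$ be a regular conditional law of $X$. By Bayes' rule the posterior of $V$ given $X=x$ has density $\frac{dP_{V\mid X=x}}{d\pi}(v) = r_v(x)$, and because $V \to X \to Z$ is Markov the posterior $\pi_z \defeq P_{V\mid Z=z}$ has density $\frac{d\pi_z}{d\pi}(v) = \int r_v\,dQ_z$. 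Set $\Delta_v(z) \defeq \int r_v\,dQ_z - 1$, so $\frac{d\pi_z}{d\pi}(v) = 1 + \Delta_v(z)$ and, using $\int dQ_z = \int dP = \int r_v\,dP = 1$, also $\Delta_v(z) = \int (r_v - 1)\,d(Q_z - P)$. Two consequences: \textbf{(i)} by Fubini and $\int r_v(x)\,d\pi(v) = 1$, $\int \Delta_v(z)\,d\pi(v) = \int 1 \, d(Q_z - P) = 0$; and \textbf{(ii)} $|\Delta_v(z)| \le \|r_v - 1\|_\infty \int |dQ_z - dP| = 2\|r_v - 1\|_\infty \tvnorm{Q_z - P} \le 2(e^\alpha - 1)\tvnorm{Q_z - P}$.

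Now fix $z$. Using $\log(1+x) \le x$, then $\frac{d\pi_z}{d\pi}(v) = 1 + \Delta_v(z)$, then fact (i),
\begin{align*}
  \dkl{\pi_z}{\pi}
  & = \int \log\big(1 + \Delta_v(z)\big)\, d\pi_z(v)
  \le \int \Delta_v(z)\, d\pi_z(v) \\
  & = \int \Delta_v(z)\big(1 + \Delta_v(z)\big)\, d\pi(v)
  = \int \Delta_v(z)^2 \, d\pi(v) \le \sup_v \Delta_v(z)^2,
\end{align*}
which is at most $4(e^\alpha - 1)^2 \tvnorm{Q_z - P}^2$ by fact (ii). Integrating over $z \sim \mu$ and recalling $I(V;Z) = \E_Z[\dkl{\pi_Z}{\pi}]$ gives $I(V;Z) \le 4(e^\alpha-1)^2\E_Z[\tvnorm{P_X(\cdot\mid Z) - P_X}^2]$. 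The one step needing care is the identity $\frac{d\pi_z}{d\pi}(v) = \int r_v\,dQ_z$---i.e., that the posterior of $V$ given $Z$ factors through the posterior of $X$ given $Z$, which is exactly the Markov property of $V \to X \to Z$---together with the Fubini interchange behind fact (i); the remaining manipulations are elementary.
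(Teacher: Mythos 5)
Your proposal is correct and takes essentially the same route as the paper: your identity $\Delta_v(z) = \int (r_v - 1)\, d(Q_z - P)$ is exactly the paper's decoupling step $p(v \mid z) - p(v) = \sum_x (p(v \mid x) - p(v))(p(x \mid z) - p(x))$, your facts (i)--(ii) match its pointwise bound, and your $\log(1+x) \le x$ plus centering argument is just an inline derivation of the KL-$\le$-$\chi^2$ bound the paper cites from Tsybakov, with Pinsker giving the second inequality in both. The only cosmetic difference is that you work directly with Radon--Nikodym derivatives, whereas the paper reduces to the discrete case by approximation.
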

\begin{proof}
  By approximation, there is no loss of generality to assume that
  each random variable is discrete~\citep{Gray90}, so that
  our variables may have probability mass functions, which we
  denote by $p$.
  We first claim that
  \begin{equation}
    |p(v \mid z) - p(v)|
    \le 2 (e^{\alpha} - 1)
    p(v)
    \tvnorm{P_X(\cdot \mid z) - P_X(\cdot)}.
    \label{eqn:turn-off-economist}
  \end{equation}
  Indeed,
  we have that $p(v \mid x) = p(x \mid v) p(v) / p(x)
  \in [e^{-\alpha}, e^\alpha] p(v)$ by assumption on $dP_v / dP_{v'}$.
  Thus, the Markov structure $V \to X \to Z$ implies
  \begin{align*}
    |p(v \mid z) - p(v)| & =
    \left|\sum_{x} p(v \mid x) p(x \mid z)
    - p(v \mid x) p(x)\right| \\
    & = \left| \sum_x
    (p(v \mid x) - p(v))(p(x \mid z) - p(x))\right| \\
    & \le |e^\alpha - 1| p(v) \sum_x |p(x \mid z) - p(x)|
    = 2 (e^\alpha - 1) p(v) \tvnorm{P_X(\cdot \mid z) - P_X}.
  \end{align*}
  Then using the definition of mutual information and that
  $\chi^2$-divergence upper bounds the KL-divergence~\cite[Lemma
    2.7]{Tsybakov09},
  \begin{align*}
    I(V; Z) & =
    \E_Z[\dkl{P_V(\cdot \mid Z)}{P_V}] \\
    & \le \E_Z\left[\sum_v
      \left(\frac{p(v \mid Z) - p(v)}{p(v)}\right)^2 p(v)\right]
    \le 4 (e^\alpha - 1)^2
    \E_Z\left[\sum_v p(v) \tvnorm{P_X(\cdot \mid Z) - P_X}^2\right],
  \end{align*}
  where the second inequality used
  inequality~\eqref{eqn:turn-off-economist}.
  By Pinsker's inequality,
  we have the bound $\tvnorm{P_X(\cdot \mid Z) - P_X}^2 \le
  \half \dkl{P_X(\cdot \mid Z)}{P_X}$, and
  using that $I(Z; X) = \E_Z[\dkl{P_X(\cdot \mid Z)}{P_X}]$
  gives the lemma.
\end{proof}

\subsection{Proof of Corollary~\ref{corollary:bernoulli}}
\label{sec:proof-bernoulli}

By Corollaries~\ref{corollary:useful-general-one}
and~\ref{corollary:useful-one}, it will be sufficient to provide a good
enough strong data processing inequality for Bernoulli random variables. We
give the proof under Assumption~\ref{assumption:summed-approximate-privacy}
(which relies on Corollary~\ref{corollary:useful-one}), as the other cases are
completely similar.  Let $P_{-1} = \bernoulli(\half)$ and, for some $\delta
< 1$, let $P_1 = \bernoulli(\frac{1 + \delta}{2})$.  Then $|\log dP_1 /
dP_{-1}| \le -\log(1 - \delta)$, and consequently, for $\packrv$ uniform on
$\{-1, 1\}$, we obtain
\begin{equation*}
  I(\packrv; Z)
  \le 2 \left(\frac{1}{1 - \delta} - 1\right)^2 I(X; Z)
  = \frac{2 \delta^2}{1 - 2 \delta + \delta^2} I(X; Z).
\end{equation*}
In particular, we have $\sdpi(P_{-1}, P_1) \le \frac{2 \delta^2}{(1 -
  \delta)^2}$, and in the notation of
Theorem~\ref{theorem:assouad-information}, we have $b = -\log\left(1 -
\delta\right)$ as well. Thus, for any $\delta < 1$, we have
\begin{equation*}
  \sum_{j = 1}^d \P(\what{V}_j(Z) \neq V_j)
  \ge \frac{d}{2}
  \left(1 - \sqrt{\frac{7 (2 - \delta)}{(1 - \delta)}
    \frac{2 \delta^2}{(1 - \delta)^2} \frac{n \diffpkl}{d}}
  - \totaldelta \right).
\end{equation*}
Taking $\delta^2 = c \min\{1, d / (n \diffpkl)\}$,
using that $\totaldelta \le \half$, and noting that the
separation is at least $\delta/2$ in Assouad's
Lemma~\ref{lemma:assouad} gives the corollary.

\subsection{Proof of Corollary~\ref{corollary:logistic-lower-bound}}
\label{sec:proof-logistic-lower-bound}

\ifcolt
We give a brief example before beginning the proof to show
that similar ideas extend to other convex risk minimization problems.
\begin{example}[Convex risk minimization]
  \label{example:convex-minimization-separation}
  Consider the problem of minimizing a convex risk functional
  $\risk_P(\theta) \defeq \E_P[\loss(\theta; X)]$, where $\loss$ is convex
  in its first argument and the expectation is over $X \sim P$.  Now, define
  $\theta(P) = \argmin_\theta \E[\loss(\theta; X)]$, and let $L(\theta,
  \theta(P)) = \risk_P(\theta) - \risk_P(\theta(P))$.  If $\risk_P$ is
  $\lambda$-strongly convex in a neighborhood of radius $r$ of $\theta(P)$,
  then a straightforward convexity argument~\citep{HiriartUrrutyLe93ab}
  yields
  \begin{equation*}
    \risk_P(\theta) - \risk_P(\theta(P))
    \ge \min\left\{\frac{\lambda}{2} \ltwo{\theta - \theta(P)}^2,
    \lambda r \ltwo{\theta - \theta(P)}\right\}.
  \end{equation*}
  Thus, if as in the previous example we can
  construct distributions $P$ such that
  $\theta(P_\packval) = \delta \cdot\packval \in \{-\delta, \delta\}^d$,
  where $\delta \le r$, then $L(\theta, \theta(P))$ induces
  a $\lambda \delta^2 / 2$-separation in Hamming metric.
\end{example}
\fi

Our proof proceeds in two steps. First, we argue that the gap in the
logistic risk is lower bounded by a quadratic
(cf.\ Example~\ref{example:convex-minimization-separation}); we then argue
that this quadratic lower bound can be reduced to estimation in a model with
independent Bernoulli coordinates. To avoid somewhat tedious constants, we
perform the analysis in an asymptotic sense.

We first describe the precise problem setting. Let $\delta > 0$, to be
chosen later, and let $\packval \in \packset \defeq \{\pm 1\}^d$ as is
standard for our applications of Assouad's method, and for each $\packval
\in \packset$ let $\theta^{\packval} = \delta \packval$.  Now, for any
$\theta \in \{\pm \delta\}^d$, consider the class-conditional
distributions with coordinates of $X \in \R^d$ independent and distributed
(conditional on $Y \in \{\pm 1\}$) as
\begin{equation*}
  X_j \mid Y = \begin{cases}
    Y & \mbox{w.p.}~ \frac{e^{\theta_j/2}}{e^{\theta_j/2} + e^{-\theta_j/2}}
    = \frac{e^{\theta_j X_j Y / 2}}{e^{\delta/2} + e^{-\delta/2}} \\
    -Y & \mbox{w.p.}~ \frac{e^{-\theta_j/2}}{e^{\theta_j/2} + e^{-\theta_j/2}}
    = \frac{e^{-\theta_j X_j Y / 2}}{e^{\delta/2} + e^{-\delta/2}}.
  \end{cases}
\end{equation*}
Let the prior probabilities $P(Y = y) = \half$ for $y \in \{\pm 1\}$
Then conditional on $X = x \in \{\pm 1\}^d$, we have
\begin{equation*}
  P(Y = y \mid X = x)
  = \frac{\prod_{j = 1}^d e^{\theta_j x_j y / 2}}{
    \prod_{j = 1}^d e^{\theta_j x_j y / 2}
    + \prod_{j = 1}^d e^{-\theta_j x_j y / 2}}
  = \frac{e^{\theta^T x y}}{1 + e^{\theta^T x y}},
\end{equation*}
so that $Y \mid X$ follows the logistic model.

\paragraph{Quadratic lower bounds on risk:}
Fixing $\packval$, let $\risk_{\delta\packval}(\theta) = \E_{\delta
  \packval}[ \loss(\theta; (X, Y))]$, where $\E_{\delta\packval}$
indicates expectation under the logistic model above with $\theta = \delta
\packval$; note that $\theta\opt \defeq \argmin_\theta \risk_{\delta
  \packval}(\theta) = \delta \packval$ here. We claim that for all
$\epsilon > 0$ there exists a $\gamma > 0$ such that
\begin{equation}
  \liminf_{\delta \downarrow 0}
  \inf_{\ltwo{\theta} \le \gamma} \lambda_{\min}(\nabla^2 \risk_{\delta \packval}
  (\theta)) \ge \frac{1 - \epsilon}{4}.
  \label{eqn:lower-quadratic-logistic}
\end{equation}
We return to prove inequality~\eqref{eqn:lower-quadratic-logistic}
at the end of the proof of the corollary,
noting that by Example~\ref{example:convex-minimization-separation},
it immediately implies that if $\delta > 0$ is small enough then
\begin{equation*}
  \risk_{\delta\packval}(\theta)
  - \inf_\theta \risk_{\delta\packval}(\theta)
  \ge \min\left\{\frac{1 - \epsilon}{8}
  \ltwo{\theta - \delta \packval}^2,
  \frac{1 - \epsilon}{4} \gamma \ltwo{\theta - \delta \packval}
  \right\}.
\end{equation*}
Projecting $\theta$ into
the set $[-\delta, \delta]^d$ can only decrease the right hand side
of the previous display, and thus (again for small enough $\delta > 0$ and
using that $\gamma > 0$ is fixed relative to $\delta$)
we see that
\begin{equation}
  \label{eqn:separation-in-logistic}
  \risk_{\delta\packval}(\theta) - \inf_{\theta} \risk_{\delta\packval}(\theta)
  \ge \delta^2 \frac{1 - \epsilon}{8}
  \sum_{j = 1}^d \indic{\sgn(\theta_j) \neq \packval_j}.
\end{equation}
This is exactly the separation condition~\eqref{eqn:delta-separation}
necessary for application of Assouad's method.

\paragraph{Reduction to Bernoulli estimation}
By construction, for each coordinate $j$, we have $YX_j \sim
\bernoulli(e^{\theta_j} / (1 + e^{\theta_j}))$, independent of the others.
As a consequence, we see for any estimator $\what{\packrv}$ of the
signs of the parameters of the logistic model, there exists an estimator
$\what{\packrv}^{\mathsf{bern}}$ and channel $\channel^{\mathsf{bern}}$,
which is equally private to $\channel$ (and both are independent of the
true $\theta = \delta \packval$), such that
\begin{equation}
  \label{eqn:logistic-loss-lb-bernoulli}
  \sum_{j = 1}^d \P(\what{\packrv}_j(\bZ) \neq \packval_j)
  \ge \sum_{j = 1}^d \P_{\channel^{\mathsf{bern}}}(
  \what{\packrv}_j^{\mathsf{bern}}(\bZ) \neq \packval_j),
\end{equation}
where the first expectation is taken over our logistic model with
parameters $\theta$ and the second over the distribution on $X$ with
independent $\bernoulli(e^{\theta_j} / (1 + e^{\theta_j}))$ coordinates.

We now apply an argument completely parallel to that in the proof of
Corollary~\ref{corollary:bernoulli}, again focusing on
Assumption~\ref{assumption:summed-approximate-privacy} for simplicity---the
parallel case under Assumptions~\ref{assumption:general-epsilon-privacy}
or~\ref{assumption:general-epsilon-delta-privacy} is similarly immediate
from Corollary~\ref{corollary:useful-general-one}.  Let $P_{-1} =
\bernoulli(e^{-\delta} / (1 + e^{-\delta}))$ and $P_1 = \bernoulli(e^\delta
/ (1 + e^\delta))$. Then $|\log dP_1 / dP_{-1}| \le 2 \delta$, and
Lemma~\ref{lemma:sdp-for-bounded-likelihood} implies that the strong data
processing constant $\beta(P_1,P_{-1}) \le 2 (e^{2 \delta} - 1)^2$.
Randomizing over $\packrv$ uniform in $\packset$,
Lemma~\ref{lemma:tv-eps-delta} and Theorem~\ref{theorem:assouad-information}
(coupled with Corollary~\ref{corollary:useful-one}) yield the lower bound
\begin{align*}
  \sum_{j = 1}^d \P(\what{\packrv}_j(\bZ) \neq \packrv_j)
  & \ge \frac{d}{2} \left(1
  - \sqrt{\frac{14 (e^{2 \delta} + 1)}{d}
    (e^{2 \delta} - 1)^2 I(X_{\le n}; \bZ \mid \packrv)}
  - \totaldelta\right) \\
  & \ge \frac{d}{2} \left(1
  - \sqrt{\frac{14 (e^{2 \delta} + 1)}{d}
    (e^{2 \delta} - 1)^2 n \diffpkl} - \totaldelta\right).
\end{align*}
Setting $\delta^2 = c \frac{d}{n \diffpkl}$ for small enough
constant $c > 0$, inequality~\eqref{eqn:separation-in-logistic}
coupled with inequality~\eqref{eqn:logistic-loss-lb-bernoulli}
immediately yields
\begin{equation*}
  \E\big[\risk_{\delta \packrv}(\what{\theta}_n(\bZ))
    - \inf_\theta \risk_{\delta \packrv}(\theta)\big]
  \ge
  C d \delta^2
  = C' d \frac{d}{n \diffpkl}
\end{equation*}
as desired, where $C, C' > 0$ are numerical constants.

\paragraph{Proof of inequality~\eqref{eqn:lower-quadratic-logistic}:}
As $\theta \mapsto \nabla^2 \risk_{\delta\packval}(\theta)$ is
$\mc{C}^\infty$ in $\theta$,
as is $\delta \mapsto \risk_{\delta\packval}(\theta)$ by the logistic model,
we may swap the limit infimum and infimum over $\ltwo{\theta} \le \gamma$.
Now, fix any $\theta$ with $\ltwo{\theta} \le \gamma$, where we will
choose $\gamma$ momentarily.
Then Lebesgue's dominated convergence theorem and the
continuity of the minimum eigenvalue $\lambda_{\min}$ gives
\begin{equation*}
  \liminf_{\delta \downarrow 0}
  \lambda_{\min}(\nabla^2 \E_{\delta \packval}[\loss(\theta; (X, Y))])
  =
  \lambda_{\min}(\E[p_\theta(X)(1 - p_\theta(X)) XX^T])
\end{equation*}
where $X \sim \uniform(\{\pm 1\}^d)$ and $p_\theta(X) = 1 / (1 +
e^{\theta^T X})$. As $\theta^T X$ is
$\ltwo{\theta}^2$-sub-Gaussian~\citep{Vershynin12}, meaning that
$\E[e^{\theta^T X}] \le \exp(\ltwo{\theta}^2 / 2)$, standard sub-Gaussian
concentration inequalities and that $\ltwo{\theta} \le \gamma$ imply that
with probability over at least $1 - \alpha$ over $X$, we have
$|\<\theta, X\>| \le \sqrt{2 \gamma^2 \log(2/\alpha)}$.
Setting $t = \sqrt{2 \gamma^2 \log(2/\alpha)}$,
if $\gamma > 0$ is small enough that $e^t / (1 + e^t)^2 \ge (1 - \epsilon/2)/4$
we have
\begin{equation*}
  \lambda_{\min}(\E[p_\theta(X)(1 - p_\theta(X)) XX^T])
  \ge \lambda_{\min}\left(\frac{1 - \epsilon / 2}{4} \E[XX^T]\right)
  - d \alpha
  = \frac{1 - \epsilon/2}{4} \lambda_{\min}(I_{d \times d})
  - d \alpha.
\end{equation*}
Choosing $\alpha$ and $\gamma$ small enough, we have
$\lambda_{\min}(\E[p_\theta(X) (1 - p_\theta(X))]) \ge \frac{1 - \epsilon}{4}$
as desired.

\subsection{Proof of Corollary~\ref{corollary:gaussian}}
\label{sec:proof-gaussian}

We provide a slightly different proof, beginning with the reduction
to Assouad's method. Let $\delta > 0$ to be chosen
presently. We first observe that
if $\theta \in [-\delta, \delta]^d$, then it is no loss of generality
to assume the estimator $\what{\theta} \in [-\delta, \delta]^d$, as otherwise,
we may simply project to $[-\delta, \delta]^d$. Then for any
distributions $P$ and $\wb{P}$ and any coordinate $j$, we have
\begin{equation*}
  \E_P[(\what{\theta}_j - \theta_j)^2]
  = \E_{\wb{P}}[(\what{\theta}_j - \theta_j)^2]
  + \int(\what{\theta}_j - \theta_j)^2 (dP - d\wb{P})
  \ge \E_{\wb{P}}[(\what{\theta}_j - \theta_j)^2]
  - 8 \delta^2 \tvnorm{P - \wb{P}}.
\end{equation*}
Now, let $\mc{P}_\delta$ be the collection of normal distributions with
means in $[-\delta, \delta]$. Using Lemma~\ref{lemma:tv-eps-delta}, we then
obtain that for any channel $\channel$ satisfying
Assumption~\ref{assumption:summed-approximate-privacy}, there exist
$\diffp_{i,t}$-differentially private channels $\wb{\channel}$ satisfying
$\sum_{i,t} \min\{\diffp_{i,t}, \diffp_{i,t}^2\} \le n \diffpkl$ such that
\begin{equation}
  \label{eqn:minimax-from-approx-to-pure}
  \minimax_n(\theta(\mc{P}), \ltwo{\cdot}^2,
  \channel)
  \ge \minimax_n(\theta(\mc{P}_\delta), \ltwo{\cdot}^2,
  \channel)
  \ge \minimax(\theta(\mc{P}_\delta), \ltwo{\cdot}^2,
  \wb{\channel}) - 8 d \delta^2 \totaldelta.
\end{equation}

In the lower bound~\eqref{eqn:minimax-from-approx-to-pure},
choosing
\begin{equation*}
  \delta^2 = c \min\left\{\frac{d}{\diffpkl} \frac{d \sigma^2}{n},
  1\right\}
\end{equation*}
and using Theorem 4.5 of \citet{BravermanGaMaNgWo16} (with the choice $k =
d/2$ in their result, along with the specified separation $\delta$), coupled
with Lemma~\ref{lemma:full-interactive-information-bound}, we obtain the
lower bound $c \min\{\frac{d}{\diffpkl} \frac{d \sigma^2}{n}, d\}$. The $d
\sigma^2 / n$ term is the standard minimax bound for estimation of a
Gaussian mean.

\subsection{Proof of Corollary~\ref{corollary:sparse-gaussian}}
\label{sec:proof-sparse-gaussian}

The proof is nearly identical to that of Corollary~\ref{corollary:gaussian},
except that in the lower bound~\eqref{eqn:minimax-from-approx-to-pure}, we
may replace the $8 d \delta^2 \totaldelta$ term with $16 k \delta^2
\totaldelta$, which follows by assuming w.l.o.g.\ that $\what{\theta}$ is
$k$-sparse, in which case we estimate at most $2k$ entries of $\theta$
incorrectly. Then the lower bound of $\frac{d}{\diffpkl} \frac{k
  \sigma^2}{n}$ follows by Theorem~4.5 of \citet{BravermanGaMaNgWo16},
coupled with Lemma~\ref{lemma:full-interactive-information-bound}.  The
minimum involving $k$ follows because $\ltwo{\theta - \theta'}^2 \le 4k$ for
all $\theta, \theta' \in [-1, 1]^d$ with $\lzero{\theta} \le k$. The $k
\sigma^2 \log( \frac{d}{k}) / n$ term is the standard minimax lower bound
for sparse Gaussian sequence estimation~\citep{Johnstone13}.

\fi



\section{Conclusion}

By building off of the results in information-limited
statistical estimation that \citet{ZhangDuJoWa13_nips},
\citet{GargMaNg14}, and \citet{BravermanGaMaNgWo16} establish,
we have developed fundamental
limits for locally private estimation at all privacy levels and for all the
acceptable and common models of privacy.  We do not believe this paper
closes any doors, however: there is a substantial gap between the worst-case
minimax bounds and asymptotic results, highlighted by the challenges of
correlated data. Identifying structures we can leverage for more efficient
private or information-constrained estimation---an analogue of the geometric
theory available in the case of classical statistics, where Fisher
information and related ideas play an essential
role---presents a challenging
direction that, we hope, may allow more frequent practical
use of private procedures.


\paragraph{Acknowledgments}
We thank Vitaly Feldman, Aleksandar Nikolov, Aaron Roth, Adam Smith, and
Salil Vadhan for clarifying discussions and feedback on an earlier version
of this draft, which (among other things) led us to the general
Definition~\ref{definition:kl-stability} of local privacy. We also thank the
Simons Institute for hosting our visit as part of the \emph{Data Privacy:
  Foundations and Applications} semester.

\clearpage 
\ifcolt
\bibliography{bib}
\else
\setlength{\bibsep}{1.pt}
\bibliography{bib}

\begin{thebibliography}{40}
\providecommand{\natexlab}[1]{#1}
\providecommand{\url}[1]{\texttt{#1}}
\expandafter\ifx\csname urlstyle\endcsname\relax
  \providecommand{\doi}[1]{doi: #1}\else
  \providecommand{\doi}{doi: \begingroup \urlstyle{rm}\Url}\fi

\bibitem[{Apple Differential Privacy Team}(2017)]{ApplePrivacy17}
{Apple Differential Privacy Team}.
\newblock Learning with privacy at scale, 2017.
\newblock Available at
  \url{https://machinelearning.apple.com/2017/12/06/learning-with-privacy-at-scale.html}.

\bibitem[Assouad(1983)]{Assouad83}
P.~Assouad.
\newblock Deux remarques sur l'estimation.
\newblock \emph{Comptes Rendus des S\'eances de l'Acad\'emie des Sciences,
  S\'erie I}, 296\penalty0 (23):\penalty0 1021--1024, 1983.

\bibitem[Beimel et~al.(2008)Beimel, Nissim, and Omri]{BeimelNiOm08}
A.~Beimel, K.~Nissim, and E.~Omri.
\newblock Distributed private data analysis: Simultaneously solving how and
  what.
\newblock In \emph{Advances in Cryptology}, volume 5157 of \emph{Lecture Notes
  in Computer Science}, pages 451--468. Springer, 2008.

\bibitem[Bhowmick et~al.(2018)Bhowmick, Duchi, Freudiger, Kapoor, and
  Rogers]{BhowmickDuFrKaRo18}
A.~Bhowmick, J.~Duchi, J.~Freudiger, G.~Kapoor, and R.~Rogers.
\newblock Protection against reconstruction and its applications in private
  federated learning.
\newblock \emph{arXiv:1812.00984 [stat.ML]}, 2018.

\bibitem[Billingsley(1986)]{Billingsley86}
P.~Billingsley.
\newblock \emph{Probability and Measure}.
\newblock Wiley, {S}econd edition, 1986.

\bibitem[Braverman et~al.(2016)Braverman, Garg, Ma, Nguyen, and
  Woodruff]{BravermanGaMaNgWo16}
M.~Braverman, A.~Garg, T.~Ma, H.~L. Nguyen, and D.~P. Woodruff.
\newblock Communication lower bounds for statistical estimation problems via a
  distributed data processing inequality.
\newblock In \emph{Proceedings of the Forty-Eighth Annual ACM Symposium on the
  Theory of Computing}, 2016.
\newblock URL \url{https://arxiv.org/abs/1506.07216}.

\bibitem[Bun and Steinke(2016)]{BunSt16}
M.~Bun and T.~Steinke.
\newblock Concentrated differential privacy: Simplifications, extensions, and
  lower bounds.
\newblock In \emph{Theory of Cryptography Conference (TCC)}, pages 635--658,
  2016.

\bibitem[Cover and Thomas(2006)]{CoverTh06}
T.~M. Cover and J.~A. Thomas.
\newblock \emph{Elements of Information Theory, Second Edition}.
\newblock Wiley, 2006.

\bibitem[Duchi and Ruan(2018)]{DuchiRu18b}
J.~C. Duchi and F.~Ruan.
\newblock The right complexity measure in locally private estimation: It is not
  the {F}isher information.
\newblock \emph{arXiv:1806.05756 [stat.TH]}, 2018.

\bibitem[Duchi et~al.(2018)Duchi, Jordan, and Wainwright]{DuchiJoWa18}
J.~C. Duchi, M.~I. Jordan, and M.~J. Wainwright.
\newblock Minimax optimal procedures for locally private estimation (with
  discussion).
\newblock \emph{Journal of the American Statistical Association}, 113\penalty0
  (521):\penalty0 182--215, 2018.

\bibitem[Dwork and Rothblum(2016)]{DworkRo16}
C.~Dwork and G.~Rothblum.
\newblock Concentrated differential privacy.
\newblock \emph{arXiv:1603.01887 [cs.DS]}, 2016.

\bibitem[Dwork et~al.(2006{\natexlab{a}})Dwork, Kenthapadi, McSherry, Mironov,
  and Naor]{DworkKeMcMiNa06}
C.~Dwork, K.~Kenthapadi, F.~McSherry, I.~Mironov, and M.~Naor.
\newblock Our data, ourselves: Privacy via distributed noise generation.
\newblock In \emph{Advances in Cryptology (EUROCRYPT 2006)},
  2006{\natexlab{a}}.

\bibitem[Dwork et~al.(2006{\natexlab{b}})Dwork, McSherry, Nissim, and
  Smith]{DworkMcNiSm06}
C.~Dwork, F.~McSherry, K.~Nissim, and A.~Smith.
\newblock Calibrating noise to sensitivity in private data analysis.
\newblock In \emph{Proceedings of the Third Theory of Cryptography Conference},
  pages 265--284, 2006{\natexlab{b}}.

\bibitem[Dwork et~al.(2010)Dwork, Rothblum, and Vadhan]{DworkRoVa10}
C.~Dwork, G.~N. Rothblum, and S.~P. Vadhan.
\newblock Boosting and differential privacy.
\newblock In \emph{51st Annual Symposium on Foundations of Computer Science},
  pages 51--60, 2010.

\bibitem[Erlingsson et~al.(2014)Erlingsson, Pihur, and
  Korolova]{ErlingssonPiKo14}
U.~Erlingsson, V.~Pihur, and A.~Korolova.
\newblock {RAPPOR}: Randomized aggregatable privacy-preserving ordinal
  response.
\newblock In \emph{Proceedings of the 21st ACM Conference on Computer and
  Communications Security (CCS)}, 2014.

\bibitem[Erlingsson et~al.(2019)Erlingsson, Feldman, Mironov, Raghunathan,
  Talwar, and Thakurta]{ErlingssonFeMiRaTaTh19}
U.~Erlingsson, V.~Feldman, I.~Mironov, A.~Raghunathan, K.~Talwar, and
  A.~Thakurta.
\newblock Amplification by shuffling: From local to central differential
  privacy via anonymity.
\newblock In \emph{Proceedings of the Thirtieth ACM-SIAM Symposium on Discrete
  Algorithms (SODA)}, 2019.

\bibitem[Feldman and Steinke(2018)]{FeldmanSt18}
V.~Feldman and T.~Steinke.
\newblock Calibrating noise to variance in adaptive data analysis.
\newblock In \emph{Proceedings of the Thirty First Annual Conference on
  Computational Learning Theory}, 2018.
\newblock URL \url{http://arxiv.org/abs/1712.07196}.

\bibitem[Gaboardi et~al.(2018)Gaboardi, Rogers, and Sheffet]{GaboardiRoSh18}
M.~Gaboardi, R.~Rogers, and O.~Sheffet.
\newblock Locally private mean estimation: Z-test and tight confidence
  intervals.
\newblock \emph{arXiv:1810.08054 [cs.DS]}, 2018.

\bibitem[Garg et~al.(2014)Garg, Ma, and Nguyen]{GargMaNg14}
A.~Garg, T.~Ma, and H.~L. Nguyen.
\newblock On communication cost of distributed statistical estimation and
  dimensionality.
\newblock In \emph{Advances in Neural Information Processing Systems 28}, 2014.

\bibitem[Gray(1990)]{Gray90}
R.~M. Gray.
\newblock \emph{Entropy and Information Theory}.
\newblock Springer, 1990.

\bibitem[Hiriart-Urruty and Lemar\'echal(1993)]{HiriartUrrutyLe93ab}
J.~Hiriart-Urruty and C.~Lemar\'echal.
\newblock \emph{Convex {A}nalysis and {M}inimization {A}lgorithms {I} \& {II}}.
\newblock Springer, New York, 1993.

\bibitem[Johnstone(2013)]{Johnstone13}
I.~Johnstone.
\newblock \emph{Gaussian Estimation: Sequence and Wavelet Models}.
\newblock 2013.

\bibitem[Joseph et~al.(2018)Joseph, Kulkarni, Mao, and Wu]{JosephKuMaWu18}
M.~Joseph, J.~Kulkarni, J.~Mao, and Z.~S. Wu.
\newblock Locally private gaussian estimation.
\newblock \emph{arXiv:1811.08382 [cs.LG]}, 2018.

\bibitem[Joseph et~al.(2019)Joseph, Mao, Neel, and Roth]{JosephMaNeRo19}
M.~Joseph, J.~Mao, S.~Neel, and A.~Roth.
\newblock The role of interactivity in local differential privacy.
\newblock \emph{arXiv:1904.03564 [cs.LG]}, 2019.

\bibitem[Kasiviswanathan and Smith(2014)]{KasiviswanathanSm14}
S.~P. Kasiviswanathan and A.~Smith.
\newblock On the 'semantics' of differential privacy: A {B}ayesian formulation.
\newblock \emph{Journal of Privacy and Confidentiality}, 6\penalty0 (1), 2014.
\newblock \doi{10.29012/jpc.v6i1.634}.
\newblock URL \url{http://arxiv.org/abs/0803.3946v3}.

\bibitem[Kasiviswanathan et~al.(2011)Kasiviswanathan, Lee, Nissim,
  Raskhodnikova, and Smith]{KasiviswanathanLeNiRaSm11}
S.~P. Kasiviswanathan, H.~K. Lee, K.~Nissim, S.~Raskhodnikova, and A.~Smith.
\newblock What can we learn privately?
\newblock \emph{SIAM Journal on Computing}, 40\penalty0 (3):\penalty0 793--826,
  2011.

\bibitem[{Le Cam} and Yang(2000)]{LeCamYa00}
L.~{Le Cam} and G.~L. Yang.
\newblock \emph{Asymptotics in Statistics: Some Basic Concepts}.
\newblock Springer, 2000.

\bibitem[Liese and Vajda(2006)]{LieseVa06}
F.~Liese and I.~Vajda.
\newblock On divergences and informations in statistics and information theory.
\newblock \emph{IEEE Transactions on Information Theory}, 52\penalty0
  (10):\penalty0 4394--4412, 2006.

\bibitem[McGregor et~al.(2010)McGregor, Mironov, Pitassi, Reingold, Talwar, and
  Vadhan]{McGregorMiPiReTaVa10}
A.~McGregor, I.~Mironov, T.~Pitassi, O.~Reingold, K.~Talwar, and S.~Vadhan.
\newblock The limits of two-party differential privacy.
\newblock In \emph{51st Annual Symposium on Foundations of Computer Science},
  pages 81--90. IEEE, 2010.

\bibitem[Mironov(2017)]{Mironov17}
I.~Mironov.
\newblock R\'{e}nyi differential privacy.
\newblock In \emph{30th IEEE Computer Security Foundations Symposium (CSF)},
  pages 263--275, 2017.

\bibitem[Rogers et~al.(2016)Rogers, Roth, Smith, and Thakkar]{RogersRoSmTh16}
R.~M. Rogers, A.~Roth, A.~D. Smith, and O.~Thakkar.
\newblock Max-information, differential privacy, and post-selection hypothesis
  testing.
\newblock In \emph{57th Annual Symposium on Foundations of Computer Science},
  pages 487--494, 2016.

\bibitem[Rohde and Steinberger(2018)]{RohdeSt18}
A.~Rohde and L.~Steinberger.
\newblock Geometrizing rates of convergence under differential privacy
  constraints.
\newblock \emph{arXiv:1805.01422 [stat.ML]}, 2018.

\bibitem[Tsybakov(2009)]{Tsybakov09}
A.~B. Tsybakov.
\newblock \emph{Introduction to Nonparametric Estimation}.
\newblock Springer, 2009.

\bibitem[van~der Vaart(1998)]{VanDerVaart98}
A.~W. van~der Vaart.
\newblock \emph{Asymptotic Statistics}.
\newblock Cambridge Series in Statistical and Probabilistic Mathematics.
  Cambridge University Press, 1998.

\bibitem[Vershynin(2012)]{Vershynin12}
R.~Vershynin.
\newblock Introduction to the non-asymptotic analysis of random matrices.
\newblock In \emph{Compressed Sensing: Theory and Applications}, chapter~5,
  pages 210--268. Cambridge University Press, 2012.

\bibitem[Wainwright(2019)]{Wainwright19}
M.~J. Wainwright.
\newblock \emph{High-Dimensional Statistics: A Non-Asymptotic Viewpoint}.
\newblock Cambridge University Press, 2019.

\bibitem[Warner(1965)]{Warner65}
S.~Warner.
\newblock Randomized response: a survey technique for eliminating evasive
  answer bias.
\newblock \emph{Journal of the American Statistical Association}, 60\penalty0
  (309):\penalty0 63--69, 1965.

\bibitem[Ye and Barg(2018)]{YeBa18}
M.~Ye and A.~Barg.
\newblock Optimal schemes for discrete distribution estimation under locally
  differential privacy.
\newblock \emph{IEEE Transactions on Information Theory}, 64\penalty0
  (8):\penalty0 5662--5676, 2018.

\bibitem[Yu(1997)]{Yu97}
B.~Yu.
\newblock Assouad, {F}ano, and {L}e {C}am.
\newblock In \emph{Festschrift for Lucien Le Cam}, pages 423--435.
  Springer-Verlag, 1997.

\bibitem[Zhang et~al.(2013)Zhang, Duchi, Jordan, and
  Wainwright]{ZhangDuJoWa13_nips}
Y.~Zhang, J.~C. Duchi, M.~I. Jordan, and M.~J. Wainwright.
\newblock Information-theoretic lower bounds for distributed estimation with
  communication constraints.
\newblock In \emph{Advances in Neural Information Processing Systems 27}, 2013.

\end{thebibliography}
\bibliographystyle{abbrvnat}
\fi


\appendix


\section{Proof of Theorem~\ref{theorem:assouad-information}}
\label{sec:proof-assouad-information}


Our proofs build essentially directly out of the work of \citet{GargMaNg14}
and \citet{BravermanGaMaNgWo16}.  The starting point for all of these
results is a due to \citet{BravermanGaMaNgWo16}, where we have carefully
controlled the constants.
\begin{corollary}[\citet{BravermanGaMaNgWo16},
    Theorem 3.1]
  \label{corollary:braverman}
  Consider a Markov chain $\altpackrv \to Y_{\leq n} \to Z$, where
  $\altpackrv \in \{\pm 1\}$ is uniform, and
  $Y_i \simiid P_\altpackval$ conditional on $\altpackrv = \altpackval$.
  Assume that $|\log \frac{dP_1}{dP_{-1}}| \le b$ and that
  the strong data processing inequality constant of
  $P_1, P_{-1}$ is $\sdpi(P_{-1}, P_1)$. Let $M_1$ and $M_{-1}$ denote
  the marginal distributions on $Z$ conditional on
  $\altpackrv = 1$ or $-1$, respectively. Then
  \begin{equation*}
    \dhel^2(M_{-1}, M_1)
    \le \frac{7}{2} (e^b + 1)
    \sdpi(P_{-1}, P_1) \sum_{i = 1}^n \min\{I(Y_i; Z \mid \altpackrv = -1),
    I(Y_i; Z \mid \altpackrv = 1)\}.
  \end{equation*}
\end{corollary}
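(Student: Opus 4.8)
The statement is~\citet[Theorem~3.1]{BravermanGaMaNgWo16} with the numerical constant written out, so the plan is to reproduce their argument and carry the constants honestly. The $n=1$ case is, after the standard comparisons among squared Hellinger distance, KL divergence and $\chi^2$-divergence, essentially the definition of the strong data processing constant $\sdpi(P_{-1},P_1)$ (Definition~\ref{definition:strong-data-processing}); all the work lies in tensorizing it across the $n$ i.i.d.\ observations $Y_i$ when $Z$ is the transcript of an arbitrarily interactive (blackboard) protocol, as it is in the applications to $\bZ$, since there a naive data-processing step recovers only the $n=1$ bound.

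First I would pass from $\dhel^2(M_{-1},M_1)$ to the Hellinger affinity $\rho \defeq \int \sqrt{dM_{-1}\,dM_1}$: one has $\dhel^2(M_{-1},M_1) \le 2\,(-\log \rho)$, so it suffices to upper bound $-\log \rho$. Next I would exploit the cut-and-paste structure of interactive protocols: conditional on the transcript $Z$ and on $\altpackrv$, the inputs $Y_1,\dots,Y_n$ remain independent, which follows from the round-by-round Markov structure of Figure~\ref{fig:communication-scheme} together with the conditional independence of the $Y_i$ given $\altpackrv$. This lets one write $-\log \rho$ (or, in the form actually used by~\citet{BravermanGaMaNgWo16}, a closely related potential) as a telescoping sum over a hybrid that flips the law of one coordinate at a time from $P_1$ to $P_{-1}$; the contribution of coordinate $i$ is then bounded by conditioning on the remaining inputs $Y_{-i}$, which turns $Z$ into a randomized function of $Y_i$ alone, so the chain $\altpackrv \to Y_i \to Z$ holds with $Y_i \mid \altpackrv$ two-point and the single-observation SDPI of Definition~\ref{definition:strong-data-processing} bounds it by $\sdpi(P_{-1},P_1)$ times a mutual-information term $I(Y_i; Z \mid \cdots)$. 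Summing over $i$ and converting the information and Hellinger quantities back to each other (Pinsker's inequality and its relatives, together with the joint convexity of the relevant Hellinger functional, used to integrate out the conditioning on $Y_{-i}$) gives the claimed inequality; the one-sided term $\min\{I(Y_i; Z \mid \altpackrv = -1), I(Y_i; Z \mid \altpackrv = 1)\}$ arises because the hybrid produces the information about $Y_i$ relative to only one of the two base distributions, and the bounded likelihood ratio $|\log \frac{dP_1}{dP_{-1}}| \le b$ lets one replace either conditional information by the smaller at a multiplicative cost of at most $e^b$ — which, together with the numerical losses in the divergence comparisons, yields the constant $\tfrac{7}{2}(e^b+1)$.

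The main obstacle is precisely the interactivity: without it the bound is a short consequence of data processing, but the transcript of an interactive protocol does not see the observations independently, and the tensorization has to be routed through the conditional-independence (rectangle) property of the transcript. Establishing and using that property, and keeping the constants honest through the hybrid argument and the chain of Hellinger, KL and $\chi^2$ comparisons — in particular verifying that the $\min$ over $\altpackrv = \pm 1$ survives with only the $e^b$ factor — is the delicate part, though it is bookkeeping once the~\citet{BravermanGaMaNgWo16} machinery (itself building on~\citet{ZhangDuJoWa13_nips} and~\citet{GargMaNg14}) is in place.
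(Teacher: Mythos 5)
Your outline is consistent with the paper, which offers no proof of this statement at all: it is imported verbatim from \citet[Theorem~3.1]{BravermanGaMaNgWo16} with the constants made explicit, so the ``paper's approach'' is simply the citation. Your sketch is a faithful reconstruction of that source argument (Hellinger affinity, the cut-and-paste/rectangle property of interactive transcripts, a per-coordinate hybrid controlled by the single-observation SDPI, and a change of measure via the bounded likelihood ratio producing the $e^b$ factor), with the understood caveat that the delicate tensorization and constant-tracking are delegated to the \citeauthor{BravermanGaMaNgWo16} machinery rather than carried out in detail.
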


\noindent
The $Y_i$ are i.i.d.\ conditional on $\altpackrv$ in the corollary,
so as an immediate consequence, we have
\begin{equation}
  \dhel^2(M_{-1}, M_1)
  \le \frac{7}{2} (e^b + 1) \sdpi(P_{0}, P_1)
  \min\{I(Y_{\leq n}; Z \mid \altpackrv = -1),
  I(Y_{\leq n}; Z \mid \altpackrv = 1)\}
  \label{eqn:multi-machine-sdpi}
\end{equation}
for any $\altpackrv \to Y_{\le n} \to Z$ when
the $Y_i$ are conditionally independent given $\altpackrv$.
To see this, note that
\begin{align*}
  I(Y_{\leq n}; Z \mid \altpackrv = \altpackval)
  & = \sum_{i = 1}^n H(Y_i \mid Y_{<i}, \altpackrv = \altpackval)
  - H(Y_i \mid Y_{<i}, Z, \altpackrv = \altpackval) \\
  & \ge \sum_{i = 1}^n H(Y_i \mid \altpackrv = \altpackval)
  - H(Y_i \mid Z, \altpackrv = \altpackval)
  = \sum_{i = 1}^n I(Y_i; Z \mid \altpackrv = \altpackval),
\end{align*}
where we use $H(Y_i \mid Y_{<i}, \altpackrv = \altpackval) =
H(Y_i \mid \altpackrv = \altpackval)$ and that conditioning reduces entropy.

The key in Theorem~\ref{theorem:assouad-information}, which uses the chain
$\packrv \to X_{\le n} \to \bZ$, is (as in the case of \citet{GargMaNg14}
and \citet{BravermanGaMaNgWo16}) that each individual $i$ draws coordinate
$j$ in $X_{i,j}$ conditional on only coordinate $\packrv_j$ of $\packrv \in
\{-1, 1\}^d$, that is, \emph{independently} of $\packrv_{\setminus j}$.
Now, let $X_{\le n,j} = (X_{i,j})_{i=1}^n$ be the $j$th coordinate of the
data, and let $X_{\le n, \setminus j}$ denote the remaining $d - 1$
coordinates across all $i = 1, \ldots, n$.  By construction of our product
sampling distribution~\eqref{eqn:product-generation},
we thus have
Markov structure
\begin{equation*}
  \packrv_j \to X_{\leq n,j} \to \bZ \leftarrow X_{\le n,\setminus j}
  \leftarrow \packrv_{\setminus j},
\end{equation*}
in turn implying (by marginalizing over $X_{\le n,\setminus j}$ and
$\packrv_{\setminus j}$) the Markov structure
\begin{equation}
  \label{eqn:simple-markov-for-info}
  \packrv_j \to X_{\leq n,j} \to \bZ.
\end{equation}

Now, define $M_{\pm j}$ to be the marginal distributions
over the total communicated private variables $\bZ$
conditional on $\packrv_j = \pm 1$.
Then Le Cam's inequalities and Cauchy-Schwarz imply that
\begin{align}
  \label{eqn:get-summed-probs-to-hellinger}
  2\sum_{j = 1}^d \P(\what{\packrv}_j(\bZ) \neq \packrv_j)
  \ge \sum_{j = 1}^d (1 - \tvnorm{M_{-j} - M_{+j}})
  & \ge \sum_{j = 1}^d (1 - \sqrt{2} \dhel(M_{-j}, M_{+j})) \\
  & \ge d\left(1 - \sqrt{\frac{2}{d} \sum_{j = 1}^d \dhel^2(M_{-j}, M_{+j})}
  \right).
  \nonumber
\end{align}

It remains to bound the summed Hellinger distances.
By inequality~\eqref{eqn:multi-machine-sdpi} and the particular
Markov structure~\eqref{eqn:simple-markov-for-info}, we have
\begin{equation}
  \label{eqn:application-multi-machine-sdpi}
  \dhel^2(M_{-j}, M_{+j})
  \le \frac{7}{2} (e^b + 1) \sdpi(P_{0}, P_1)
  I(X_{\leq n,j}; \bZ \mid \packrv_j).
\end{equation}
Using the fact that conditioning reduces entropy and that conditional
on $\packrv_j$, the values $X_{\le n, j}$ are i.i.d.\ and independent
of $\packrv_{\setminus j}$, we have
\begin{align*}
  I(X_{\le n, j}; \bZ \mid \packrv_j)
  & = H(X_{\le n, j} \mid \packrv_j) - H(X_{\le n, j} \mid \packrv_j,
  \bZ) \\
  & \le H(X_{\le n, j} \mid \packrv_j, \packrv_{\setminus j})
  - H(X_{\le n, j} \mid \packrv_j, \packrv_{\setminus j}, \bZ) \\
  & =
  I(X_{\le n, j}; \bZ \mid \packrv).
\end{align*}
The following lemma relates the individual informations
to the global information $I(X_{\leq n}; \bZ \mid \packrv)$.

\begin{lemma}
  \label{lemma:coordinate-complexity-to-global}
  Let $\packrv, X_{\leq n}, \bZ$ be as in
  Theorem~\ref{theorem:assouad-information}.  Then
  \begin{equation*}
    \sum_{j = 1}^d I(X_{\le n, j}; \bZ \mid \packrv)
    \le I(X_{\leq n}; \bZ \mid \packrv).
  \end{equation*}
\end{lemma}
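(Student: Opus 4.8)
The plan is to use the one structural fact that has not yet been exploited: under the product sampling model~\eqref{eqn:product-generation}, the coordinate-blocks $X_{\le n,1}, \dots, X_{\le n,d}$, where $X_{\le n,j} = (X_{i,j})_{i=1}^n$, are \emph{mutually independent conditional on $\packrv$} (indeed $X_{\le n,j}$ depends on $\packrv$ only through $\packrv_j$). With this in hand the lemma is exactly the superadditivity of mutual information over conditionally independent components, so I would simply run the standard entropy decomposition while carrying $\packrv$ through every conditioning.

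Concretely, first write $I(X_{\le n}; \bZ \mid \packrv) = H(X_{\le n}\mid\packrv) - H(X_{\le n}\mid \packrv,\bZ)$. For the first term, conditional independence of the blocks given $\packrv$ gives the exact identity $H(X_{\le n}\mid\packrv) = \sum_{j=1}^d H(X_{\le n,j}\mid\packrv)$ (additivity of entropy over independent variables, then average over $\packrv$). For the second term, subadditivity of entropy, now conditioning on the pair $(\packrv,\bZ)$, gives $H(X_{\le n}\mid \packrv,\bZ) \le \sum_{j=1}^d H(X_{\le n,j}\mid \packrv,\bZ)$. Subtracting the second bound from the first yields $I(X_{\le n};\bZ\mid\packrv) \ge \sum_{j=1}^d\big(H(X_{\le n,j}\mid\packrv) - H(X_{\le n,j}\mid\packrv,\bZ)\big) = \sum_{j=1}^d I(X_{\le n,j};\bZ\mid\packrv)$, which is the claim.

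The only real care is measure-theoretic: the data may be continuous (e.g.\ Gaussian), so I would reduce to the discrete case by a standard approximation argument before invoking additivity and subadditivity of Shannon entropy, exactly as in the paper's other information-theoretic lemmas; alternatively one can state the two steps directly at the level of mutual information. I do not expect any genuine obstacle here---the content is entirely bookkeeping of the conditioning on $\packrv$---and this lemma is precisely what lets the per-coordinate bound~\eqref{eqn:application-multi-machine-sdpi} be assembled into a bound in terms of the global quantity $I(X_{\le n};\bZ\mid\packrv)$ that the privacy lemmas (Lemma~\ref{lemma:full-interactive-information-bound} and its companions) control.
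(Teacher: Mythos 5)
Your proof is correct and is essentially the paper's own argument: both decompose the conditional mutual information into entropies, use exact additivity of $H(X_{\le n}\mid\packrv)$ over coordinates (conditional independence given $\packrv$), and bound $H(X_{\le n}\mid\packrv,\bZ)$ by the sum of per-coordinate conditional entropies — the paper just phrases this subadditivity step as chain rule plus ``conditioning reduces entropy.'' No substantive difference.
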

\begin{proof}
  We have
  \begin{align*}
    \sum_{j = 1}^d I(X_{\le n, j}; \bZ \mid \packrv)
    & = \sum_{j = 1}^d \left[H(X_{\le n, j} \mid \packrv)
      - H(X_{\le n, j} \mid \bZ, \packrv)\right] \\
    & \stackrel{(i)}{=} H(X_{\leq n} \mid \packrv)
    - \sum_{j = 1}^d H(X_{\le n, j} \mid \bZ, \packrv) \\
    & \stackrel{(ii)}{\le} H(X_{\leq n} \mid \packrv)
    - \sum_{j = 1}^d H(X_{\le n, j} \mid X_{\le n, < j}, \bZ, \packrv)
    = I(X_{\leq n}; \bZ \mid \packrv),
  \end{align*}
  where the equality~$(i)$ follows because conditional on $\packrv$, the
  coordinates $X_{\le n, j}$ are independent, and inequality~$(ii)$ because
  conditioning reduces entropy.
\end{proof}

Substituting the bound of Lemma~\ref{lemma:coordinate-complexity-to-global}
via the consequence~\eqref{eqn:application-multi-machine-sdpi} of the strong
data processing inequality~\eqref{eqn:multi-machine-sdpi} into
inequality~\eqref{eqn:get-summed-probs-to-hellinger}, we have
\begin{equation*}
  2 \sum_{j = 1}^d \P(\what{\packrv}_j(\bZ)
  \neq \packrv_j)
  \ge d \left(1 - \sqrt{7 (e^b + 1) \sdpi
    I(X_{\leq n}; \bZ \mid \packrv) / d}\right).
\end{equation*}
This is the desired result.



\section{Proofs of mutual information bounds}

\subsection{Proof of Lemma~\ref{lemma:full-interactive-information-bound}}
\label{sec:proof-full-interactive-information-bound}

We have
\begin{align}
  I(\bZ; X_{\le n} \mid \packrv)
  & = \sum_{i = 1}^n I(\bZ; X_i \mid X_{< i}, \packrv) \nonumber \\
  & = \sum_{i = 1}^n
  \E\left[\E\left[\dkl{\channel(\bZ \in \cdot \mid X_{\le i}, \packrv)}{
        \channel(\bZ \in \cdot \mid X_{< i}, \packrv)}
      \mid \packrv\right]
    \right]
  \label{eqn:information-to-kl-easy}
\end{align}
where the first equality is the chain rule and the second equality
uses the equivalence of mutual information and expected KL-divergence,
where $\channel(\bZ \in \cdot \mid X_{\le i}, \packrv)$ denotes the
conditional distribution of the full set of private variables $\bZ$ given
$X_{\le i}$.
Now we note that
\begin{equation*}
  \channel(\bZ \in \cdot \mid x_{\le i}, \packval)
  = \int \channel(\bZ \in \cdot \mid x_{\le n}) dP_\packval(x_{i+1})
  \cdots dP_\packval(x_n)
\end{equation*}
because the $X_i$ are independent conditional on $\packrv = \packval$,
and similarly for $\channel(\bZ \in \cdot \mid x_{< i}, \packval)$.
The joint convexity of the KL-divergence then implies
\begin{align*}
  \lefteqn{\dkl{\channel(\bZ \in \cdot \mid x_{\le i}, \packval)}{
      \channel(\bZ \in \cdot \mid x_{< i}, \packval)}} \\
  & \le \int_{\mc{X}^{n - i}} \int_{\mc{X}}
  \underbrace{\dkl{\channel(\bZ \in \cdot \mid x_{\le n})}{
      \channel(\bZ \in \cdot \mid x_{< i}, x_i', x_{> i})}}_{\eqdef
    \diffp_i(x_{\le n}, x_i')}
  dP_\packval(x_{i+1}) \cdots dP_\packval(x_n)
  dP_\packval(x_i')
\end{align*}
where we let $\diffp_i$ be as above.
Assumption~\ref{assumption:general-epsilon-privacy} gives
that $\sum_{i = 1}^n \diffp_i(x_{\le n}, x_i') \le n \diffpkl$,
and substituting in the chain rule~\eqref{eqn:information-to-kl-easy}
gives the result.


\subsection{Proof of Lemma~\ref{lemma:eps-delta-all-information}}
\label{sec:proof-eps-delta-all-information}

The result actually follows from two more sophisticated lemmas, which we
state here and proof subsequently (see
Section~\ref{sec:proof-eps-delta-information}).
\begin{lemma}
  \label{lemma:eps-delta-information}
  Let $X_i$ be i.i.d.\ and $Z$ be $(\diffp, \delta)$-differentially
  private for $X_{\le n}$, where each $X_i$ takes values
  in the finite set $\mc{X}$.
  Let $\eta > 0$ and define
  $p_\eta = 2 (\frac{\delta}{\eta}
  + \eta \frac{e^{3\diffp}}{e^{3 \diffp} - 1}
  + \frac{\delta e^\diffp}{e^\diffp - 1})$ and the
  binary entropy $h_2(p) = -p\log p - (1 - p) \log(1 - p)$. If
  $p_\eta \le 1$, then
  \begin{equation*}
    I(X_{\le n}; Z) \le
    n \cdot \left[6 \diffp + p_\eta \log|\mc{X}| + h_2(p_\eta)\right]
  \end{equation*}
  Additionally, if $\eta > 0$ is small enough that
  $\eta (2 e^{6 \diffp} / (e^{3 \diffp} - 1) + 1) \le \half$, then
  \begin{equation*}
    I(X_{\le n}; Z)
    \le n \cdot
    \left(6 \diffp (e^{6 \diffp} - 1)
    +
    3 \eta \left[e^{3 \diffp}
      + 3 \eta \frac{e^{12 \diffp}}{(e^{3 \diffp} - 1)^2}\right]
    + p_\eta \log |\mc{X}| + h_2(p_\eta)\right).
  \end{equation*}
\end{lemma}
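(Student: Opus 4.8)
The plan is to peel off the $n$ samples with the chain rule, reducing everything to a single‑variable mutual information bound for an $(\diffp,\delta)$‑differentially private channel, and then to prove that single‑variable bound by adapting \citet{RogersRoSmTh16}: an $(\diffp,\delta)$‑private channel behaves, off an event of probability $O(\delta+\eta)$, like a pure $O(\diffp)$‑private one. Concretely, write $I(X_{\le n};Z)=\sum_{i=1}^n I(X_i;Z\mid X_{<i})$. For each realization $x_{<i}$, the conditional channel $x_i\mapsto\channel(Z\in\cdot\mid X_{<i}=x_{<i},X_i=x_i)$ is obtained from $\channel(Z\in\cdot\mid x_{\le n})$ by fixing $x_{<i}$ and averaging over $X_{>i}$, which preserves $(\diffp,\delta)$‑privacy in $x_i$ because only coordinate $i$ varies and averaging $(\diffp,\delta)$‑private channels yields an $(\diffp,\delta)$‑private channel (same $\diffp,\delta$). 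Since $X_i\mid X_{<i}$ has the common law $P$ on the finite set $\mc{X}$, it suffices to prove the single‑variable claim: whenever $X$ has any law on a finite set $\mc{X}$ and $W$ is $(\diffp,\delta)$‑private for $X$, then $I(X;W)\le 6\diffp+p_\eta\log|\mc{X}|+h_2(p_\eta)$, and likewise for the refined bound; summing over $i$ then yields the stated factor of $n$.

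For the single‑variable claim, let $\mu=\E_X[\channel(\cdot\mid X)]$ be the marginal of $W$, so that the posterior‑to‑prior likelihood ratio equals $d\channel(\cdot\mid x)/d\mu$. I would split the output space into a typical set $G$ consisting of those $w$ for which $d\channel(\cdot\mid x)/d\mu\,(w)$ lies in $[e^{-6\diffp},e^{6\diffp}]$ for all $x$, and its complement $G^c$. On $G$ the posterior and prior are $6\diffp$‑indistinguishable, so $\dkl{P_{X\mid W=w}}{P_X}\le 6\diffp$ by the pointwise‑bounded‑log‑likelihood estimate used for pure privacy. The crux is $\mu(G^c)\le p_\eta$: averaging $\channel(S\mid x)\le e^\diffp\channel(S\mid x')+\delta$ over $x'\sim P$ gives $\channel(S\mid x)\le e^\diffp\mu(S)+\delta$ for all $x$ and $S$, and applying this to level sets of the likelihood ratio --- a Markov‑type truncation at threshold $\eta$ (the $\delta/\eta$ term), truncation of the ratio at $e^{3\diffp}$ (the $\eta\,e^{3\diffp}/(e^{3\diffp}-1)$ term), and a direct $(\diffp,\delta)$ estimate of the probability that a neighbor‑to‑neighbor ratio exceeds $e^{2\diffp}$ (the $\delta e^\diffp/(e^\diffp-1)$ term) --- produces exactly the three summands of $p_\eta$; the hypothesis $\eta(2e^{6\diffp}/(e^{3\diffp}-1)+1)\le\half$ keeps these truncations consistent and forces $p_\eta\le1$. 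Then $I(X;W)=I(X;\indic{W\in G})+I(X;W\mid\indic{W\in G})\le h_2(p_\eta)+\mu(G)\cdot 6\diffp+\mu(G^c)\log|\mc{X}|$ gives the first inequality. (Equivalently, this bounds the $p_\eta$‑approximate max‑information of $(X,W)$ by $6\diffp$, which one converts to a mutual‑information bound in the usual way.)

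For the refined bound I would, on the typical set $G$, replace the estimate ``relative entropy $\le$ range of the log‑likelihood ratio'' by the quadratic‑in‑$\diffp$ estimate for $O(\diffp)$‑indistinguishable distributions --- the $\renparam=1$ case of inequality~\eqref{eqn:diffp-to-renyi}, namely $\dkl{\nu_1}{\nu_2}\le\rho(e^\rho-1)$ when $\log\frac{d\nu_1}{d\nu_2}$ lies in $[-\rho,\rho]$ --- applied with $\rho=6\diffp$ together with the $O(\eta)$ corrections that the $\eta$‑level truncation contributes to both the effective $\rho$ and the residual mass. This replaces the $6\diffp$ summand by $6\diffp(e^{6\diffp}-1)+3\eta[e^{3\diffp}+3\eta e^{12\diffp}/(e^{3\diffp}-1)^2]$, which is $O(\diffp^2)$ for small $\diffp$ and is precisely what allows Lemma~\ref{lemma:eps-delta-all-information} to take the minimum of a linear and a quadratic function of $\diffp$.

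The main obstacle is the bookkeeping in the single‑variable step: showing simultaneously that the atypical event has $\mu$‑probability at most $p_\eta$ --- with no spurious factor of $|\mc{X}|$ --- and that the posterior‑to‑prior ratio off that event is within $e^{O(\diffp)}$. This is the quantitative content of the statement ``$(\diffp,\delta)$‑privacy is pure $O(\diffp)$‑privacy off a small‑probability event,'' which for a variable taking more than two values genuinely requires the level‑set and union‑bound estimates of \citet{RogersRoSmTh16} rather than any direct likelihood‑ratio argument; carrying these out with the explicit constants that yield the three summands of $p_\eta$, and checking the elementary inequalities behind them, is the bulk of the deferred proof.
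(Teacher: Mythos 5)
Your overall strategy matches the paper's (chain rule to reduce to a per-coordinate bound, ``$(\diffp,\delta)$-privacy behaves like pure $O(\diffp)$-privacy off a small event'' via \citet{RogersRoSmTh16}, a Fano-type $p_\eta\log|\mc{X}| + h_2(p_\eta)$ term for the bad event, and a quadratic-in-$\diffp$ refinement on the good event), and your reduction of $I(X_i;Z\mid X_{<i})$ to a single-variable problem by averaging over $X_{>i}$ is valid. The genuine gap is in your single-variable step: you define the typical set $G$ as a set of \emph{outputs} $w$ on which the likelihood ratio $d\channel(\cdot\mid x)/d\mu(w)$ is within $[e^{-6\diffp},e^{6\diffp}]$ \emph{for all} $x$, and claim $\mu(G^c)\le p_\eta$. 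That claim is false in general with the stated $p_\eta$. The per-$x$ deviation sets each have $\mu$-mass up to order $\delta/\diffp$, and they can be disjoint across $x$: take $\mc{X}=\{1,\dots,k\}$ with uniform prior, $\mc{Z}=\{0,1,\dots,k\}$, and $\channel(\{x\}\mid x)=\alpha$, $\channel(\{y\}\mid x)=\beta$ for $y\ne x$, $\channel(\{0\}\mid x)=1-\alpha-(k-1)\beta$, with $\alpha=e^{7\diffp}\beta$ and $\beta=\delta/(e^{7\diffp}-e^\diffp)$. This channel is exactly $(\diffp,\delta)$-differentially private, yet every output $w\in\{1,\dots,k\}$ violates your uniform ratio bound (at $x=w$), so $\mu(G^c)\approx k\beta$, which is $\Omega(1)$ once $|\mc{X}|\gtrsim \diffp/\delta$ --- a regime the lemma's hypotheses allow --- while $p_\eta$ remains small. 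The union bound you would need over $x\in\mc{X}$ is precisely where the ``spurious factor of $|\mc{X}|$'' enters, and the estimates of \citet{RogersRoSmTh16} do not give the uniform-in-$x$, output-measurable event you posit: their bad event (and the paper's sets $E_i(\eta)$, $F_i$, $G_i(\eta)$) lives in the \emph{joint} space of $(x_{\le i},z)$ and is measured under the joint law, i.e.\ the per-$x$ bad sets are weighted by the prior $P(x)$ rather than unioned.

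This matters structurally, not just in constants: once the good event is joint in $(X,Z)$, your decomposition $I(X;W)\le I(X;\indic{W\in G}) + I(X;W\mid \indic{W\in G})$ with a $W$-measurable $G$ is unavailable (and even if $G$ were output-measurable, conditioning changes the prior to $P_{X\mid W\in G}$, so the ``$6\diffp$ per good output'' step needs care). The paper instead writes $I(X_i;Z\mid X_{<i}) = \E[\privloss_i\indic{G_i(\eta)}] + \E[\privloss_i\indic{G_i(\eta)^c}]$, bounds the second term by $P(G^c)\log|\mc{X}| + h_2(P(G^c))$ through a dedicated argument (its Lemma~\ref{lemma:control-ungood-part}, applied to the pair conditioned on the bad joint event), and for the refined bound controls $\E[\privloss_i\indic{G_i(\eta)}]$ by splitting $P(X_i=x\mid Z=z,x_{<i})$ into $P(X_i=x)$ plus a deviation bounded by $e^{6\diffp}-1$ and applying Jensen's inequality together with the indistinguishability property of $E_i(\eta)$; your sketch asserts the resulting $6\diffp(e^{6\diffp}-1)+3\eta[\cdots]$ term but does not supply this mechanism. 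So while the ingredients you name are the right ones, the concrete construction you propose for the central ``small bad event'' step would fail, and repairing it leads you back to the joint-event argument the paper actually carries out.
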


Extending this lemma for particular $\delta$ allows us to
provide more intepretable results.
\begin{lemma}
  \label{lemma:simplify-eps-delta-information}
  In addition to the conditions of Lemma~\ref{lemma:eps-delta-information},
  assume that
  $16 \sqrt{\delta \max\{\diffp^{-1}, 1\}} \le 1$,
  $\delta \max\{\diffp^{-1}, 1\} \log \frac{1}{\delta
    \max\{\diffp^{-1}, 1\}} \le \diffp^2$, and
  $\delta \max\{\diffp^{-1}, 1\} \log^2 |\mc{X}| \le \diffp^2$.
  Then
  \begin{equation*}
    I(X_{\le n}; Z) \le 9 n \diffp.
  \end{equation*}
  If $\diffp \le 1/6$ and we additionally have
  $\delta \le \frac{\diffp^5}{64 \log^2 |\mc{X}|}$
  and $\delta \log^2 \frac{\diffp}{\delta}
  \le \diffp^5 / 16$, then
  \begin{equation*}
    I(X_{\le n}; Z) \le 75 n \diffp^2.
  \end{equation*}
\end{lemma}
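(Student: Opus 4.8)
The plan is to \emph{specialize and estimate}: both bounds follow by plugging a judiciously chosen $\eta=\eta(\diffp,\delta)$ into Lemma~\ref{lemma:eps-delta-information} and then controlling the ``error'' terms $p_\eta\log|\mc{X}|$, $h_2(p_\eta)$, and (for the quadratic bound) $3\eta[e^{3\diffp}+3\eta e^{12\diffp}/(e^{3\diffp}-1)^2]$ by the stated hypotheses. Throughout I write $u\defeq\delta\max\{\diffp^{-1},1\}$ and use the elementary estimates $\frac{e^{c\diffp}}{e^{c\diffp}-1}=\frac{1}{1-e^{-c\diffp}}\asymp\max\{\diffp^{-1},1\}$ for $c\in\{1,3\}$ and, for $p$ small, $h_2(p)\le p+p\log\frac1p$.

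For the first bound $I(X_{\le n};Z)\le 9n\diffp$ I would use the \emph{first} conclusion of Lemma~\ref{lemma:eps-delta-information}, whose leading term $6\diffp$ already supplies most of the budget. I choose $\eta$ to nearly minimize $p_\eta$ over $\eta$, i.e.\ $\eta\asymp\sqrt{\delta(1-e^{-3\diffp})}$; by AM--GM this gives $p_\eta\le C\sqrt{u}$ up to the lower-order term $2\delta\frac{e^{\diffp}}{e^{\diffp}-1}\asymp u$. The hypothesis $16\sqrt{u}\le 1$ forces $u\le 2^{-8}$, hence $p_\eta\le 1$ (so Lemma~\ref{lemma:eps-delta-information} applies) and $p_\eta$ lies in the increasing range of $p\mapsto p\log(e/p)$. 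Then hypothesis~3, $u\log^2|\mc{X}|\le\diffp^2$, gives $p_\eta\log|\mc{X}|\le C\sqrt{u}\log|\mc{X}|\le C\diffp$, while $h_2(p_\eta)\le p_\eta+p_\eta\log\frac1{p_\eta}\le C\sqrt{u}\log\frac1u$ is controlled by hypothesis~2, $u\log\frac1u\le\diffp^2$, together with the smallness $u\le 2^{-8}$ (which is what upgrades the crude bound $\sqrt u\log\frac1u\le\diffp\sqrt{\log\frac1u}$ to the desired $O(\diffp)$; the residual very-small-$\diffp$ range is dominated by the quadratic bound below). Adding $6\diffp$ to these two $O(\diffp)$ contributions and fixing constants yields $9n\diffp$.

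For the second bound $I(X_{\le n};Z)\le 75n\diffp^2$ when $\diffp\le\frac16$ I would use the \emph{second} conclusion of Lemma~\ref{lemma:eps-delta-information}. Here $\diffp\le\frac16$ makes $e^{6\diffp}-1\le 12\diffp$ and $(e^{3\diffp}-1)^{-1}\le C/\diffp$, so the leading term obeys $6\diffp(e^{6\diffp}-1)\le 72\diffp^2$, leaving $3\diffp^2$ of slack. I pick $\eta\asymp\sqrt{\delta\diffp}$, balancing $\frac{\delta}{\eta}$ against $\eta\frac{e^{3\diffp}}{e^{3\diffp}-1}\asymp\frac{\eta}{3\diffp}$; one checks $\eta\bigl(2e^{6\diffp}/(e^{3\diffp}-1)+1\bigr)\le\frac12$ from $\delta\le\diffp^5/(64\log^2|\mc{X}|)$, so the conclusion applies. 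With this $\eta$, $3\eta[e^{3\diffp}+3\eta e^{12\diffp}/(e^{3\diffp}-1)^2]\le C(\sqrt{\delta\diffp}+\delta/\diffp)\le C\diffp^2$ using $\delta\le\diffp^5$, and $p_\eta\asymp\sqrt{\delta/\diffp}$, so $p_\eta\log|\mc{X}|\le C\sqrt{\delta/\diffp}\,\log|\mc{X}|\le C\diffp^2$ by $\delta\le\diffp^5/(64\log^2|\mc{X}|)$, and $h_2(p_\eta)\le p_\eta+p_\eta\log\frac1{p_\eta}\le C\sqrt{\delta/\diffp}\,\log\frac{\diffp}{\delta}\le C\diffp^2$ by the final hypothesis $\delta\log^2\frac{\diffp}{\delta}\le\diffp^5/16$. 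Summing the four pieces and fixing constants gives $75n\diffp^2$.

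The main obstacle, and the only step beyond bookkeeping, is controlling $h_2(p_\eta)$: since $p_\eta$ cannot be made smaller than order $\sqrt{\delta\max\{\diffp^{-1},1\}}$ (both the $\eta$-free term $\asymp\delta\frac{e^{\diffp}}{e^{\diffp}-1}$ and the AM--GM floor $\sqrt{\delta\,e^{3\diffp}/(e^{3\diffp}-1)}$ have this size), naive use of hypothesis~2 only yields $\sqrt u\log\frac1u\le\diffp\sqrt{\log\frac1u}$, and one must invoke the explicit smallness $16\sqrt u\le1$ (resp.\ the two extra $\log^2$ hypotheses in the $\diffp\le\frac16$ case) to absorb the stray $\sqrt{\log}$ factor; the remaining work is matching the numerical constants $9$ and $75$, which is precisely why the hypotheses carry the specific factors they do.
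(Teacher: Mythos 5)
Your overall route is the same as the paper's: specialize $\eta \approx \sqrt{\delta\min\{\diffp,1\}}$ in Lemma~\ref{lemma:eps-delta-information}, bound $e^{c\diffp}/(e^{c\diffp}-1)$ by $\max\{c'\diffp^{-1}, \mathrm{const}\}$, and control $6\diffp$ (resp.\ $6\diffp(e^{6\diffp}-1)$), $p_\eta\log|\mc{X}|$, and $h_2(p_\eta)$ term by term; your treatment of the second ($\diffp\le 1/6$) bound is essentially the paper's argument and is sound modulo the untracked constants. The genuine gap is in the first bound, at exactly the step you flag as the ``main obstacle'': the control of $h_2(p_\eta)$. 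Your claim that the smallness condition $16\sqrt{u}\le 1$ (with $u=\delta\max\{\diffp^{-1},1\}$) upgrades $\sqrt{u}\log\tfrac1u \le \diffp\sqrt{\log\tfrac1u}$ to $O(\diffp)$ is backwards: an upper bound on $u$ only makes $\log\tfrac1u$ larger, so it cannot absorb the stray $\sqrt{\log\tfrac1u}$. Concretely, take $u=e^{-M}$ and $\diffp^2 = u\log\tfrac1u$; then all hypotheses of the first claim can hold while $\sqrt{u}\log\tfrac1u = \sqrt{M}\,\diffp \gg \diffp$, and since $p_\eta \gtrsim \sqrt{u}$ for \emph{every} choice of $\eta$ (both the $\eta$-free term and the AM--GM floor are of that order), no choice of $\eta$ rescues the estimate. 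Your fallback---``the residual very-small-$\diffp$ range is dominated by the quadratic bound below''---is not available either, because the second statement's extra hypotheses ($\diffp\le 1/6$, $\delta\le\diffp^5/(64\log^2|\mc{X}|)$, $\delta\log^2\tfrac{\diffp}{\delta}\le\diffp^5/16$) are not assumed in the first claim, and in the example above they indeed fail.

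For comparison, the paper's own proof does not attempt your derivation: at this point it simply works with the condition $\sqrt{\delta\max\{\diffp^{-1},1\}}\,\log\frac{1}{\delta\max\{\diffp^{-1},1\}}\le\diffp$ (i.e.\ the same ``squared'' form it uses for the $\log^2|\mc{X}|$ condition), which is stronger than the hypothesis as printed in the lemma statement. So you have correctly identified a real mismatch between the lemma's stated hypothesis and what the argument needs, but your proposed repair does not close it; the honest fixes are either to strengthen hypothesis~2 to $\delta\max\{\diffp^{-1},1\}\log^2\frac{1}{\delta\max\{\diffp^{-1},1\}}\le\diffp^2$ (as the proof effectively assumes) or to supply a genuinely different bound on $h_2(p_\eta)$ in the regime where $u$ is tiny and $\diffp$ is small. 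Secondarily, both claimed constants ($9$ and $75$) are asserted via ``fixing constants'' rather than tracked; since the budget for the first bound is tight ($3\diffp$ beyond the leading $6\diffp$ must cover both $p_\eta\log|\mc{X}|$ and $h_2(p_\eta)$), that bookkeeping is not entirely cosmetic.
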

\noindent
The proof is mostly algebraic manipulations; see
Section~\ref{sec:proof-simplify-eps-delta-information}.

By recalling that in our packing of the hypercube,
the Markov chain $\packrv \to X_{\le n} \to \bZ$ guarantees that
the $X_i$ are i.i.d.\ conditional on $\packrv$,
Lemma~\ref{lemma:simplify-eps-delta-information} implies
Lemma~\ref{lemma:eps-delta-all-information} immediately.

\subsection{Proof of Lemma~\ref{lemma:eps-delta-information}}
\label{sec:proof-eps-delta-information}

In this section, we provide the proof of
Lemma~\ref{lemma:eps-delta-information}.  We require a number of different
claims.  First, we assume w.l.o.g.\ that all random variables of interest
are discrete and finitely supported (as we note earlier, the mutual
information $I(X; Y)$ is arbitrarily approximated by finite partitions of
the ranges of $X$ and $Y$~\cite{Gray90}). We make a few definitions and
give examples.

\begin{definition}
  \label{definition:indistinguishable}
  Let $X, Y$ be arbitrary random variables. They are
  \emph{$(\diffp,\delta)$-indistinguishable}, which we denote
  $X \indistinguishable{\diffp,\delta} Y$, if
  the set $E \defeq \{x : |\log \frac{P(X = x)}{P(Y = x)}|
  \le \delta\}$ satisfies $P(Y \not\in E) \ge 1 - \delta$ and
  $P(X \not \in E) \le \delta$.
\end{definition}

With this definition, we introduce a few notational shorthands for ease of
use later. Let $Z$ be the random variable distributed as
$\channel_Z(\cdot \mid X)$ (i.e.\ conditional on $X$), and we let
$\given{X}_{Z = z}$ be the random variable $X$ conditional on $Z = z$,
that is, the posterior on $X$ given $Z = z$.  With this, we can follow
\citet{RogersRoSmTh16} and their development of mutual information bounds
based on approximate differential privacy. The key is
to bound the sequence of \emph{privacy loss} random variables,
\begin{equation*}
  \privloss_i(x_{\le i}, z) \defeq
  \log \frac{P(X_i = x \mid Z = z, X_{< i} = x_{< i})}{P(X_i = x)},
\end{equation*}
as the mutual information between discrete variables $(X_{\le n}, Z)$
where the $X_i$ are i.i.d.\ is
\begin{align}
  I(X_{\le n}; Z) & = \sum_{i = 1}^n
  I(X_i; Z \mid X_{< i})
  = \sum_{i = 1}^n \E\left[\privloss_i(X_{\le i}, Z)\right]
  \label{eqn:information-as-privacy-loss}.
\end{align}

We begin with two of \citeauthor{RogersRoSmTh16}'s claims, which in turn
build off of \citet{KasiviswanathanSm14}.  For $\delta > 0$ and
$i \in [n]$, define the sets
\begin{align*}
  E_i(\delta) & \defeq \left\{(x_{< i}, z) \in \mc{X}^{i-1} \times
  \mc{Z} :
  X_i \indistinguishable{3\diffp, \delta}
  \given{X_i}_{Z = z, X_{< i} = x_{< i}}\right\} \\
  F_i & \defeq \left\{(x_{\le i}, z) \in \mc{X} \times \mc{Z}
  : |\privloss_i(x_{\le i}, z)| \le 6 \diffp \right\} \\
  G_i(\delta) & \defeq
  \left\{(x_{\le i}, z) \in \mc{X}^i \times \mc{Z}
  : (x_{< i}, z) \in E_i(\delta), (x_{\le i}, z) \in F_i \right\},
\end{align*}
so that $G$ is essentially the ``good'' set where the pair
$(X, Z)$ behaves as though $Z$ is $\diffp$-differentially private.

We then have
\begin{lemma}[\citet{RogersRoSmTh16},
    Claims 3.4--3.6]
  \label{lemma:channel-conditions-nicely}
  Let the channel $\channel_Z(\cdot \mid X)$ be
  $(\diffp, \delta)$-differentially private. Then
  for any $\eta > 0$ and $z \in E(\eta)$,
  \begin{subequations}
    \begin{align}
      \P\left((X_{< i}, Z) \in E_i(\eta) \mid X_{< i} = x_{< i}\right)
      & \ge 1 - \frac{2 \delta}{\eta}
      - \frac{2 \delta e^\diffp}{e^\diffp - 1}
      \label{eqn:equal-set-highprob} \\
      \P\left((X_{\le i}, Z) \in F_i \mid Z = z, X_{< i} = x_{< i}\right)
      & \ge 1 - \frac{2 \eta e^{3 \diffp}}{e^{3 \diffp} - 1}
      \label{eqn:bounded-set-highprob} \\
      \P\left((X_{\le i}, Z) \in G_i(\eta)\right)
      & \ge 1 - \frac{2 \delta}{\eta}
      - \frac{2 \delta e^\diffp}{e^\diffp - 1}
      - \frac{2 \eta e^{3 \diffp}}{e^{3\diffp} - 1}.
      \label{eqn:good-set-highprob}
    \end{align}
  \end{subequations}
\end{lemma}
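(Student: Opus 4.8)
The plan is to adapt the argument of \citet{RogersRoSmTh16} (their Claims~3.4--3.6), which in turn builds on \citet{KasiviswanathanSm14}. The first step is a reduction to single-coordinate privacy. Since $Z$ is $(\diffp,\delta)$-differentially private for $X_{\le n}$ and the $X_i$ are i.i.d.\ conditionally, changing only the $i$th coordinate while holding $X_{<i}=x_{<i}$ fixed produces a neighboring database; marginalizing the inequality $\channel(Z\in S\mid x_{\le n}) \le e^\diffp \channel(Z\in S\mid x^{(i)}_{\le n}) + \delta$ over $X_{>i}$, whose law is the same on both sides, shows that conditioned on $X_{<i}=x_{<i}$ the channel $x_i \mapsto Z$ is itself $(\diffp,\delta)$-differentially private. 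All three displays then concern this conditional channel, and I will suppress the conditioning on $X_{<i}=x_{<i}$ in the notation below.

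For \eqref{eqn:equal-set-highprob}, I would first pass from $(\diffp,\delta)$-differential privacy to its pointwise form: there is a ``bad'' output set $B\subseteq\mc{Z}$ with $\channel(Z\in B\mid x_i)\le \frac{\delta e^\diffp}{e^\diffp-1}$ for every $x_i$, outside of which all pairwise likelihood ratios $\channel(z\mid x_i)/\channel(z\mid x_i')$ lie in $[e^{-\diffp},e^\diffp]$ (the standard ``$\diffp$-indistinguishability on a high-probability event'' statement). Applying Bayes' rule, for $z\notin B$ the posterior $P(X_i=x\mid Z=z)$ and the prior $P(X_i=x)$ differ by at most a factor $e^\diffp$ whenever the normalizing constant $P(Z=z)=\sum_{x'}\channel(z\mid x')P(X_i=x')$ is not dominated by an atypical slice of the prior; a Markov-inequality change of measure, controlled by the free slack parameter $\eta$, bounds the prior (and posterior) mass of the remaining atypical $x$-values by a multiple of $\eta$ and yields the $(3\diffp,\eta)$-indistinguishability defining $E_i(\eta)$. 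Collecting the exceptional contributions --- the output set $B$ plus two change-of-measure terms --- produces exactly the $\frac{2\delta}{\eta} + \frac{2\delta e^\diffp}{e^\diffp-1}$ deficiency claimed.

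Display \eqref{eqn:bounded-set-highprob} is a second application of the same pointwise conversion, now \emph{inside} the event $(x_{<i},z)\in E_i(\eta)$: there the posterior on $X_i$ is $(3\diffp,\eta)$-indistinguishable from the prior, so the privacy-loss variable $\privloss_i$ is bounded --- by $6\diffp$ once the bound is symmetrized across the two directions of the ratio and combined with the single-coordinate control --- except on a posterior-mass-$\frac{2\eta e^{3\diffp}}{e^{3\diffp}-1}$ exceptional set, which is \eqref{eqn:bounded-set-highprob}. Finally, \eqref{eqn:good-set-highprob} is a union bound: $G_i(\eta)$ fails only if $(x_{<i},z)\notin E_i(\eta)$ or $(x_{\le i},z)\notin F_i$, so taking expectations of \eqref{eqn:equal-set-highprob} over $X_{<i}$ and of \eqref{eqn:bounded-set-highprob} over $(Z,X_{<i})$ and adding the two deficiencies gives the stated bound.

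The main obstacle is the second paragraph: turning likelihood-ratio control between the conditionals $\channel(\cdot\mid x_i)$ into control of the \emph{posterior} $\given{X_i}_{Z=z}$ requires a change of measure from the output law to the posterior, and the bookkeeping that keeps all constants explicit --- tracking which exceptional event is measured under the prior, the posterior, or the output distribution, and how the slack $\eta$ trades off against $\delta$ --- is exactly the delicate part, and is why the constants $3\diffp$, $6\diffp$, and the combination $\frac{2\delta}{\eta}+\frac{2\delta e^\diffp}{e^\diffp-1}+\frac{2\eta e^{3\diffp}}{e^{3\diffp}-1}$ appear rather than cleaner ones. Since this is precisely the content of Claims~3.4--3.6 of \citet{RogersRoSmTh16}, I would largely transcribe their argument, being careful only that our conditioning on $X_{<i}$ is harmless because the $X_i$ are independent.
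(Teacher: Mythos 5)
Your proposal takes essentially the same route as the paper: the paper gives no proof of this lemma, importing it verbatim as Claims~3.4--3.6 of \citet{RogersRoSmTh16}, and your plan is precisely to transcribe that argument, with the one genuinely needed adaptation---that conditioning on $X_{<i}=x_{<i}$ and marginalizing over the i.i.d.\ $X_{>i}$ reduces matters to a single-coordinate $(\diffp,\delta)$-private channel---handled correctly. Your outline of the pointwise conversion \`a la \citet{KasiviswanathanSm14}, the $\eta$-controlled change of measure giving the $2\delta/\eta$ term, the second conversion inside $E_i(\eta)$ yielding the $6\diffp$ bound, and the final union bound matches the structure behind the stated constants.
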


With Lemma~\ref{lemma:channel-conditions-nicely}, we can bound
the mutual information between $X$ and $Z$.
We begin by decomposing the mutual information into two sums, as
for any $\eta > 0$,
\begin{align}
  \nonumber
  \lefteqn{I(X_i; Z \mid X_{< i})} \\
  & = \E[\privloss_i(X_{\le i}, Z) \indic{(X_{\le i}, Z) \in G_i(\eta)}]
  + \E[\privloss_i(X_{\le i}, Z) \indic{(X_{\le i}, Z) \not \in G_i(\eta)}].
  \label{eqn:split-info-into-two}
\end{align}
We control each of the terms in turn.
\begin{lemma}
  \label{lemma:control-ungood-part}
  Let $\eta > 0$, and define the shorthands
  $P(G_\eta^c) = P((X_{\le i}, Z) \not \in G_i(\eta))$
  and $h_2(p) = p \log \frac{1}{p} + (1 - p) \log \frac{1}{1 - p}$. Then
  \begin{equation*}
    \E[\privloss_i(X_{\le i}, Z) \indic{(X_{\le i}, Z) \not \in G_i(\eta)}]
    \le P(G_\eta^c) \log |\mc{X}|
    + h_2(P(G_\eta^c)).
  \end{equation*}
\end{lemma}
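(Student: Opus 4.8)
The bound in Lemma~\ref{lemma:control-ungood-part} involves no privacy parameters, so it is really just the elementary fact that a low-probability event contributes little to the mutual information $I(X_i;Z\mid X_{<i}) = \E[\privloss_i(X_{\le i},Z)]$; the plan is to make this precise by splitting the privacy loss and controlling a ``partial entropy'' term. Throughout (and using the standing reduction to finitely supported discrete variables), write $p(x) \defeq \P(X_i = x)$, which equals $\P(X_i = x\mid X_{<i} = x_{<i})$ because the $X_i$ are i.i.d., and $q(x\mid z,x_{<i}) \defeq \P(X_i = x\mid Z = z, X_{<i} = x_{<i})$, so that $\privloss_i(x_{\le i},z) = \log q(x_i\mid z,x_{<i}) + \log\tfrac{1}{p(x_i)}$. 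Configurations with $q(x_i\mid z,x_{<i}) = 0$ or $p(x_i) = 0$ carry zero probability in the joint law and may be discarded.

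First I would dispense with the likelihood term: since $q(\cdot\mid z,x_{<i})$ is a probability mass function we have $\log q \le 0$, hence pointwise
\[
  \privloss_i(X_{\le i},Z)\,\indic{(X_{\le i},Z)\notin G_i(\eta)}
  \;\le\; \indic{(X_{\le i},Z)\notin G_i(\eta)}\,\log\frac{1}{p(X_i)},
\]
and it suffices to bound $\E\big[\indic{(X_{\le i},Z)\notin G_i(\eta)}\log\tfrac{1}{p(X_i)}\big]$ by $P(G_\eta^c)\log|\mc{X}| + h_2(P(G_\eta^c))$. To this end, set $\beta \defeq P(G_\eta^c) = \P((X_{\le i},Z)\notin G_i(\eta))$ (the case $\beta = 0$ being trivial), expand the expectation as $\sum_{x\in\mc{X}}\P((X_{\le i},Z)\notin G_i(\eta),\,X_i = x)\log\tfrac{1}{p(x)} = \beta\sum_{x}\gamma(x)\log\tfrac1{p(x)}$ where $\gamma(x)\defeq \beta^{-1}\P((X_{\le i},Z)\notin G_i(\eta),X_i=x)$ is a probability vector on $\mc{X}$, and apply concavity of $\log$ (Jensen) to get $\sum_x\gamma(x)\log\tfrac1{p(x)} \le \log\sum_x \tfrac{\gamma(x)}{p(x)}$. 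Since $\P((X_{\le i},Z)\notin G_i(\eta),X_i = x)\le\P(X_i = x) = p(x)$, each ratio $\gamma(x)/p(x)$ is at most $1/\beta$ on the support of $\gamma$, so $\sum_x\gamma(x)/p(x)\le|\mc{X}|/\beta$ and therefore
\[
  \E\Big[\indic{(X_{\le i},Z)\notin G_i(\eta)}\log\tfrac{1}{p(X_i)}\Big]
  \;\le\; \beta\log\frac{|\mc{X}|}{\beta}
  \;=\; \beta\log|\mc{X}| + \beta\log\tfrac1\beta
  \;\le\; \beta\log|\mc{X}| + h_2(\beta),
\]
using $h_2(\beta) = \beta\log\tfrac1\beta + (1-\beta)\log\tfrac1{1-\beta} \ge \beta\log\tfrac1\beta$. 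Combining the two displays gives the claim.

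There is no serious obstacle here: the argument is three lines of convexity once the privacy loss is split into a likelihood factor (handled by $\log q \le 0$) and a self-information factor. The only points requiring care are the bookkeeping of zero-probability configurations and the degenerate $\beta = 0$ boundary case; the single load-bearing step is the Jensen bound combined with the observation $\P(G_\eta^c, X_i = x)\le p(x)$, which is exactly what turns the otherwise unbounded quantity $\log\tfrac{1}{p(X_i)}$, restricted to the ``bad'' set, into the claimed $\beta\log|\mc{X}| + h_2(\beta)$.
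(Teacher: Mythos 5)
Your proof is correct, and it takes a genuinely different route from the paper's. The paper conditions on $X_{<i} = x_{<i}$, introduces the restricted variables $X' = \given{X_i}_{G^c}$ and $Z' = \given{Z}_{G^c}$, and runs the chain $\log|\mc{X}| \ge H(X' \mid x_{<i}) \ge I(X'; Z' \mid x_{<i})$, lower bounding this restricted mutual information by $\frac{1}{P(G^c \mid x_{<i})}\E[\privloss_i \indic{G^c} \mid x_{<i}] + \log P(G^c \mid x_{<i})$ and rearranging; it then integrates over $X_{<i}$ and invokes conditioning-reduces-entropy to replace the averaged conditional binary entropy by $h_2(P(G_\eta^c))$. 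You instead split the privacy loss pointwise as $\privloss_i = \log q(x_i \mid z, x_{<i}) + \log\frac{1}{p(x_i)}$, discard the nonpositive posterior term, and bound the remaining self-information restricted to the bad set via Jensen applied to the tilted distribution $\gamma(x) \propto P(G^c, X_i = x)$ together with the trivial bound $P(G^c, X_i = x) \le p(x)$, which yields $\beta \log(|\mc{X}|/\beta)$ directly at the marginal level. Your argument is more elementary: it never forms the mutual information of the conditioned pair, works unconditionally so the final conditioning/concavity step disappears, and it in fact gives the slightly sharper intermediate bound $\beta\log|\mc{X}| + \beta\log\frac{1}{\beta}$ before relaxing to $h_2(\beta)$. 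What the paper's packaging buys is the standard ``low-probability events contribute little information'' template (retaining the $\log q$ term so the quantity is literally an information of the restricted variables), but for the stated lemma both routes land on the identical bound, and your handling of the zero-probability configurations and the $\beta = 0$ boundary case is the right level of care.
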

\begin{proof}
  For shorthand, let $G \equiv G_i(\eta)$.
  Let $X' = \given{X_i}_{(X_{\le i}, Z) \not \in G}$ and
  $Z' = \given{Z}_{(X_{\le i}, Z) \not \in G}$. Then
  as $\mc{X}$ is finite,
  we have
  \begin{align*}
    \log |\mc{X}|
    & \ge H(X_i' \mid X_{< i} = x_{< i}) \ge I(X_i'; Z_i' \mid X_{< i}
    = x_{< i}) \\
    & = \sum_{x, z}
    \frac{P(X_i = x, Z = z, G^c \mid x_{< i})}{P(G^c \mid x_{< i})}
    \log \frac{P(X_i = x, Z = z \mid x_{< i}) P(G^c \mid x_{< i})}{
      P(X_i = x, G^c \mid x_{< i})
      P(Z = z, G^c \mid x_{< i})} \\
    & \ge \sum_{x, z}
    \frac{P(X_i = x, Z = z, G^c \mid x_{< i})}{P(G^c \mid x_{< i})}
    \log \frac{P(X_i = x, Z = z \mid x_{< i}) P(G^c \mid x_{< i})}{
      P(X_i = x \mid x_{< i}) P(Z = z \mid x_{< i})} \\
    & = \sum_{x, z}
    \frac{P(X = x, Z = z, G^c \mid x_{< i})}{P(G^c \mid x_{< i})}
    \left[\privloss_i(x_{\le i}, z) + \log P(G^c \mid x_{< i})\right]
  \end{align*}
  Rearranging gives that
  \begin{align*}
    \E\left[\privloss_i(X_{\le i}, Z) \indic{(X_{\le i}, Z) \not \in G}
      \mid X_{< i} = x_{< i}\right]
    & \le P(G^c \mid x_{< i}) \left[\log |\mc{X}|
      + \log \frac{1}{P(G^c \mid x_{< i})}\right] \\
    & \le P(G^c \mid x_{< i}) \log |\mc{X}|
    + H(\indic{G} \mid X_{< i} = x_{< i}).
  \end{align*}
  Integrating over the marginal of $X_{< i}$ and noting that
  conditioning always reduces entropy, we obtain
  \begin{equation*}
    \E\left[\privloss_i(X_{\le i}, Z) \indic{(X_{\le i}, Z) \not \in G}\right]
    \le P(G^c \mid x_{< i}) \log |\mc{X}|
    + H(\indic{G})
  \end{equation*}
  as desired.
\end{proof}

We now turn to the first term in the
expansion~\eqref{eqn:split-info-into-two}. We always have $\privloss_i(X, Z)
\le 6 \diffp$ on the event $G(\eta)$, so
that Lemma~\ref{lemma:control-ungood-part},
coupled with the chain rule~\eqref{eqn:information-as-privacy-loss}
and probability bound~\eqref{eqn:good-set-highprob}
gives
\begin{equation*}
  I(X_{\le n}; Z) \le \sum_{i = 1}^n \left(6 \diffp +
  p_\eta \log |\mc{X}| + h_2(p_\eta)\right)
\end{equation*}
for $p_\eta = \frac{2 \delta}{\eta} + \frac{2 \delta e^\diffp}{
  e^\diffp - 1} + \frac{2 \eta e^{3 \diffp}}{e^{3 \diffp} - 1}$.
This is evidently the first
claim of
Lemma~\ref{lemma:eps-delta-information}.

To see the second claim requires a bit more work,
though the next lemma suffices.
\begin{lemma}
  Let $\eta > 0$ be small enough that
  $\eta (2 e^{6 \diffp} / (e^{3 \diffp} - 1) + 1) \le \half$.
  Then
  \begin{equation*}
    \E[\privloss_i(X_{\le i}, Z) \indic{(X, Z) \in G(\eta)}]
    \le 6 \diffp (e^{6\diffp} - 1)
    + 3 \eta \left[e^{3 \diffp}
      + 3 \eta \frac{e^{12 \diffp}}{(e^{3 \diffp} - 1)^2}\right].
  \end{equation*}
\end{lemma}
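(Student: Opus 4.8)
The plan is to condition on the pair $(Z, X_{<i})$ and reduce the claim to a pointwise estimate. Writing $\indic{G_i(\eta)} = \indic{(X_{<i},Z)\in E_i(\eta)}\cdot\indic{(X_{\le i},Z)\in F_i}$ and integrating over $X_i$ last, it suffices to show that for every fixed $(x_{<i},z)\in E_i(\eta)$,
\begin{equation*}
  \E\big[\privloss_i(X_{\le i},Z)\,\indic{(X_{\le i},Z)\in F_i}\,\big|\,Z=z,\,X_{<i}=x_{<i}\big]
\end{equation*}
is bounded by the claimed right-hand side; averaging over $(x_{<i},z)$ and using $\P((X_{<i},Z)\in E_i(\eta))\le 1$ (and that the bound is nonnegative) then gives the lemma. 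Fix such a pair, let $P$ be the marginal law of $X_i$ and $Q$ the posterior law of $X_i$ given $Z=z,\,X_{<i}=x_{<i}$, so $\privloss_i(x,z)=\log\frac{Q(x)}{P(x)}$ and the displayed conditional expectation equals the truncated divergence $\sum_x Q(x)\log\frac{Q(x)}{P(x)}\indic{|\log(Q(x)/P(x))|\le 6\diffp}$. Since $(x_{<i},z)\in E_i(\eta)$, the laws $P$ and $Q$ are $(3\diffp,\eta)$-indistinguishable in the sense of Definition~\ref{definition:indistinguishable}: there is a set $E\subseteq\mc{X}$ with $e^{-3\diffp}\le Q(x)/P(x)\le e^{3\diffp}$ for $x\in E$, and $P(E^c)\le\eta$, $Q(E^c)\le\eta$.

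Next I would split the sum over $\mc{X}$ into the ``inner band'' $E$, where the likelihood ratio is within $e^{\pm 3\diffp}$, and the ``outer band'' $F_i\setminus E\subseteq E^c$, where $3\diffp<|\log(Q(x)/P(x))|\le 6\diffp$. On the inner band I would use the elementary inequalities $\log t\le t-1$, hence $t\log t-(t-1)\le (t-1)\log t$, with $t=Q(x)/P(x)\in[e^{-3\diffp},e^{3\diffp}]$, together with the telescoping identity $\sum_{x\in E}(Q(x)-P(x))=P(E^c)-Q(E^c)\in[-\eta,\eta]$ and the ratio bound $|Q(x)/P(x)-1|\le e^{6\diffp}-1$ valid throughout $F_i$, yielding a contribution of order $6\diffp(e^{6\diffp}-1)$ plus lower-order $\eta$ terms. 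On the outer band the truncated privacy loss is at most $6\diffp$ in magnitude and the $Q$-mass of $E^c$ is at most $\eta$, so the raw contribution is $O(\diffp\eta)$; carrying it through the renormalization step of \citet{RogersRoSmTh16} (replacing $Q$ by $Q(\cdot\cap E)/Q(E)$, with $Q(E)\ge 1-\eta$), which is precisely where the hypothesis $\eta(2e^{6\diffp}/(e^{3\diffp}-1)+1)\le\half$ is used to keep the renormalizing factor below $2$, produces the stated $3\eta e^{3\diffp}$ term and the second-order correction $9\eta^2 e^{12\diffp}/(e^{3\diffp}-1)^2$. Summing the two band contributions gives the claimed bound.

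The hard part will be the bookkeeping: a naive application of $\log t\le t-1$ already gives the right \emph{orders} ($\diffp^2$, $\eta$, and $\eta^2/\diffp^2$), but matching the exact constants $6\diffp(e^{6\diffp}-1)$, $3\eta e^{3\diffp}$, and $9\eta^2 e^{12\diffp}/(e^{3\diffp}-1)^2$ requires following the indistinguishability and renormalization estimates underlying Lemma~\ref{lemma:channel-conditions-nicely} (the Claims 3.4--3.6 of \citet{RogersRoSmTh16}) rather than the crude bound, and tracking how the factors $e^{3\diffp}/(e^{3\diffp}-1)$ appearing in those claims get squared. Once this lemma is established, it combines with the decomposition~\eqref{eqn:split-info-into-two}, the high-probability bound~\eqref{eqn:good-set-highprob}, the chain rule~\eqref{eqn:information-as-privacy-loss}, and Lemma~\ref{lemma:control-ungood-part} to yield the second assertion of Lemma~\ref{lemma:eps-delta-information}.
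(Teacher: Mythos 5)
Your plan is sound and, read carefully, it actually works—but it is a genuinely different decomposition from the paper's. The paper fixes $(x_{<i},z)\in E_i(\eta)$ as you do, but then splits the \emph{integrand} rather than the domain: writing $Q(x)=P(X_i=x\mid Z=z,x_{<i})$ and $P(x)=P(X_i=x)$, it bounds $\sum_{x:(x_{\le i},z)\in F_i}\privloss_i\,Q = \sum \privloss_i\,(Q-P) + \sum \privloss_i\,P$, controls the first term by $6\diffp(e^{6\diffp}-1)$ via the $F_i$ ratio bound, and handles the second by Jensen's inequality, $\sum_{F_i}P\log\frac{Q}{P}\le P(F_i)\log\frac{Q(F_i)}{P(F_i)}$, followed by the indistinguishability relation $P(F_i^c)\le e^{3\diffp}Q(F_i^c)+\eta$, the probability bound~\eqref{eqn:bounded-set-highprob} $q\defeq Q(F_i^c)\le \frac{2\eta e^{3\diffp}}{e^{3\diffp}-1}$, and $-\log(1-t)\le t+t^2$ with $t=e^{3\diffp}q+\eta$; this last step is exactly where the hypothesis $\eta(2e^{6\diffp}/(e^{3\diffp}-1)+1)\le\half$ enters—not, as you suggest, to keep a renormalizing factor of $Q(\cdot\cap E)/Q(E)$ below $2$—and it is what generates the $\eta^2 e^{12\diffp}/(e^{3\diffp}-1)^2$ term. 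Your domain split $F_i=E\cup(F_i\setminus E)$ avoids all of that machinery, and your worry about ``matching the exact constants'' is unfounded: the lemma only asserts an upper bound, and your crude estimates already dominate it. Indeed, on $E$ the inequality $t\log t\le (t-1)+(t-1)\log t$ with the telescoping identity gives $\sum_E Q\log\frac{Q}{P}\le 3\diffp(e^{3\diffp}-1)+\eta$, while on $F_i\setminus E\subseteq E^c$ the positive part of $\privloss_i$ is at most $6\diffp$ and $Q(E^c)\le\eta$, so the outer band contributes at most $6\diffp\eta$; since $3\diffp(e^{3\diffp}-1)\le 6\diffp(e^{6\diffp}-1)$ and $\eta(1+6\diffp)\le 3\eta e^{3\diffp}$, the total is below the stated right-hand side without using the quadratic term or even the hypothesis on $\eta$. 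So you should simply carry out your own bookkeeping rather than defer to the renormalization estimates of \citet{RogersRoSmTh16}; the only care needed is the pointwise reduction you already state (the bound is nonnegative, and pairs $(x_{<i},z)\notin E_i(\eta)$ contribute zero), plus noting $E\subseteq F_i$ so the two bands do partition $F_i$.
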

\begin{proof}
  Let $(x_{< i}, z) \in E(\eta)$. Then
  \begin{align}
    \lefteqn{\E[\privloss_i(X_{\le i}, Z) \indic{(X, Z) \in G(\eta)}
      \mid Z = z, X_{< i} = x_{< i}]} \nonumber \\
    & = \sum_{x_i : (x_{\le i}, z) \in F_i}
      \privloss_i(x_{\le i}, z) P(X_i = x_i \mid Z = z, x_{< i}) \nonumber \\
    &
    = \sum_{x_i : (x_{\le i}, z) \in F_i}
    \privloss_i(x_{\le i}, z) [P(X_i = x_i \mid Z = z, x_{<i}) - P(X_i = x_i)]
    + \sum_{x_i : (x_{\le i}, z) \in F_i}
    \privloss_i(x_{\le i}, z) P(X_i = x_i) \nonumber \\
    & \le
    6 \diffp (e^{6 \diffp} - 1)
    + \sum_{x_i : (x_{\le i}, z) \in F_i}
    \privloss_i(x_{\le i}, z) P(X_i = x_i)
    \label{eqn:split-privy-losses}
  \end{align}
  where we have used that
  \begin{equation*}
    |P(X_i = x_i \mid Z = z, x_{< i}) - P(X_i = x_i)|
    \le e^{6 \diffp} - 1
  \end{equation*}
  by definition of the set $F_i$ and that $(x_{\le i}, z) \in F_i$,
  and that similarly $|\privloss_i(x_{\le i}, z)| \le 6 \diffp$.

  To bound the second term in the sum~\eqref{eqn:split-privy-losses},
  we note that
  \begin{align*}
    \lefteqn{\sum_{x_i : (x_{\le i}, z) \in F_i}
      \privloss_i(x_{\le i}, z) P(X_i = x_i)} \\
    & = P((X_{\le i}, z) \in F_i \mid x_{< i}) \sum_{x_i : (x_{\le i}, z) \in F_i}
    \privloss_i(x_{\le i}, z) \frac{P(X = x)}{P((X_{\le i}, z) \in F_i \mid x_{< i})} \\
    & \le P((X_{\le i}, z) \in F_i \mid x_{< i})
    \log \frac{P((X_{\le i}, Z) \in F_i \mid x_{< i}, Z = z)}{
      P((X_{\le i}, z) \in F_i \mid x_{< i})}\\
    & = P((X_{\le i}, z) \in F_i \mid x_{< i})
    \log \frac{1 - P((X_{\le i}, Z) \not\in F_i \mid x_{< i}, Z = z)}{
      1 - P((X_{\le i}, z) \not\in F_i \mid x_{< i})}
  \end{align*}
  by Jensen's inequality. Let us bound the logarithmic terms.
  As $(x_{< i}, z) \in E_i(\eta)$ by assumption, we have
  $P((X_{\le i}, z) \not \in F_i \mid x_{< i}) \le e^{3 \diffp}
  P((X_{\le i}, Z) \not \in F_i \mid Z = z, x_{< i}) + \eta$.
  Letting $q = P((X_{\le i}, Z) \not \in F_i \mid Z = z, x_{< i})$ for shorthand,
  Lemma~\ref{lemma:channel-conditions-nicely}
  (Eq.~\eqref{eqn:bounded-set-highprob})
  implies that $q \le \frac{2 \eta e^{3 \diffp}}{e^{3 \diffp} - 1}$,
  and thus
  \begin{equation*}
    \log \frac{1 - P((X_{\le i}, Z) \not \in F_i \mid Z = z, x_{< i})}{1
      - P((X_{\le i}, z) \not \in F_i \mid x_{< i})}
    \le \log \frac{1 - q}{1 - e^{3 \diffp} q - \eta}
    \le (e^{3 \diffp} - 1) q + \eta + (e^{3 \diffp} q + \eta)^2,
  \end{equation*}
  where we have used that $-\log(1 - t) \le t + t^2$ for
  $t \le \half$ and the assumption that
  $e^{3 \diffp} q + \eta < \half$.
  Returning to our bounds on the sum~\eqref{eqn:split-privy-losses},
  we see that
  \begin{equation*}
    \E[\privloss_i(X_{\le i}, Z) \indic{(X, Z) \in G(\eta)}
      \mid z, x_{< i}]
    \le 6 \diffp (e^{6\diffp} - 1)
    + \eta\left[
      2 e^{3 \diffp}
      + 1
      + \eta \left(\frac{2 e^{6 \diffp}}{e^{3 \diffp} - 1} + 1\right)^2
      \right].
  \end{equation*}
  Noting that $e^{6 \diffp} / (e^{3 \diffp} - 1)
  > 5/4$ gives the result.
\end{proof}

\subsection{Proof of Lemma~\ref{lemma:simplify-eps-delta-information}}
\label{sec:proof-simplify-eps-delta-information}

We begin by addressing the
exponential in $\diffp$ terms, which will
allow easier derivation. For all $\diffp \ge 0$, we have
\begin{subequations}
  \label{eqn:dumb-epsilon-bounds}
  \begin{equation}
    \frac{e^{3 \diffp}}{e^{3 \diffp} - 1}
    \le \max\left\{\frac{1}{\diffp}, \frac{e}{e - 1}\right\},
    ~~
    \frac{e^\diffp}{e^\diffp - 1}
    \le \max\left\{\frac{2}{\diffp}, \frac{e}{e - 1}\right\},
  \end{equation}
  and for $\diffp \le \frac{1}{6}$,
  \begin{equation}
    \frac{e^{6 \diffp}}{e^{3 \diffp} - 1}
    \le \frac{3}{4\diffp},
    ~~
    \frac{e^{12 \diffp}}{(e^{3 \diffp} - 1)^2}
    \le \frac{1}{2 \diffp^2},
    ~~
    6 \diffp (e^{6 \diffp} - 1)
    \le 62 \diffp^2.
  \end{equation}
\end{subequations}

Using the bounds~\eqref{eqn:dumb-epsilon-bounds}, we can provide our
desired mutual information bounds. In the case that $\diffp \ge 0$ is
arbitrary, we use the first bound of
Lemma~\ref{lemma:eps-delta-information}.
In this case, to apply the bound it is sufficient that
$p_\eta \le 2 (\frac{\delta}{\eta}
+ \eta \max\{\diffp^{-1}, 2 \}
+ \delta \max\{2 \diffp^{-1}, 2\})
\le \half$,
and taking $\eta = \sqrt{\delta \min\{\diffp, 1/2\}}$ gives that
\begin{equation*}
  p_\eta \le 4 \sqrt{\delta \max\{\diffp^{-1}, 2\}}
  + 2 \delta \max\{\diffp^{-1}, 1\}
  \le 8 \sqrt{\delta \max\{\diffp^{-1}, 1\}} \le \half
\end{equation*}
whenever $\sqrt{\delta \max\{\diffp^{-1}, 1\}} \le 1/16$.
Assuming additionally that
$\sqrt{\delta \max\{\diffp^{-1}, 1\}} \log \frac{1}{\delta
  \max\{\diffp^{-1}, 1\}} \le \diffp$
and $\sqrt{\delta \max\{\diffp^{-1}, 1\}} \log |\mc{X}| \le
\diffp$ gives the first claimed result as
$h_2(p) \le -2p \log p$ for $p \le \half$.

For the second result, under the additional condition that $\diffp \le
1/6$, our chosen $\eta = \sqrt{\delta \min\{\diffp, 1/2\}} =
\sqrt{\delta \diffp}$ satisfies $\eta (\frac{3}{2 \diffp} + 1) \le
\frac{1}{2}$ (as $\delta \le 1 / (64 \diffp)$). When $\delta \le \diffp^3$,
we have
\begin{equation*}
  3 \eta \left[e^{3 \diffp} + 3 \eta \frac{e^{12 \diffp}}{(e^{3\diffp} - 1)^2}
    \right]
  \le 3 \sqrt{\delta \diffp}
  \left(2 + \frac{3}{2} \sqrt{\frac{\delta}{\diffp^3}}\right)
  \le 11 \diffp^2
\end{equation*}
by inequalities~\eqref{eqn:dumb-epsilon-bounds}.
Finally, in this case we again have
$p_\eta \le 8 \sqrt{\delta / \diffp}$, and so
if
\begin{equation*}
  \delta
  \le \frac{\diffp^5}{64 \log^2 |\mc{X}|}
  ~~ \mbox{and} ~~
  \delta \log^2 \frac{\diffp}{\delta} \le \frac{\diffp^5}{16}
\end{equation*}
then $p_\eta\log|\mc{X}| + h_2(p_\eta) \le 2
\diffp^2$. These bounds and
Lemma~\ref{lemma:eps-delta-information} give the second result.





\ifcolt

\fi


\section{Technical proofs}
\label{sec:deferred-proofs}
  
\subsection{From approximate to pure differential privacy (proof
  of Lemma~\ref{lemma:tv-eps-delta})}
\label{sec:proof-tv-eps-delta}

In this section, we prove Lemma~\ref{lemma:tv-eps-delta}.
The idea in the
lemma is simple (though measurability issues preclude trivial proof): we can
construct alternative channels $\wb{\channel}$ that are close in variation
distance to $\channel$, where $\wb{\channel}$ satisfy pure differential
privacy.

We use Lemma~\ref{lemma:regular-private-densities} along with the
fact that it is no loss of generality to assume that, by approximations
and continuity of $f$-divergences, the $\bZ$ are
discrete~\cite[Thm.~15]{LieseVa06}. Indeed,
the variation distance $\tvnorm{\cdot}$ is an $f$-divergence and
$\P_{\pm 1}$ are marginal distributions over
$\bZ = Z_{\le n}^{(\le T)} \in \mc{Z}^{nT}$. Thus,
letting $\mc{A}$ denote a finite rectangular partition of
$\mc{Z}^{nT}$, meaning that the sets in $A \in \mc{A}$ are of the
form
\begin{equation*}
  A = \prod_{t = 1}^T \left(A_{1,1} \otimes A_{2,1} \otimes \cdots \otimes
  A_{n,1} \right),
  ~~ A_{i,t} \subset \mc{Z},
\end{equation*}
and recalling that rectangles generate the Borel $\sigma$-algebra on
$\mc{Z}^{nT}$, we have the equality~\cite[cf.][Theorem 15]{LieseVa06}
\begin{equation}
  \label{eqn:approx-tv-by-products}
  \tvnorm{\P_{1} - \P_{-1}}
  = \sup_{\mc{A}} \sum_{A \in \mc{A}}
  |\P_{1}(\bZ \in A)
  - \P_{-1}(\bZ \in A)|,
\end{equation}
where the supremum is taken over all finite rectangular partitions of
$\mc{Z}^{nT}$.

We use equality~\eqref{eqn:approx-tv-by-products} to prove the
result. Without loss of generality, we assume the
supremum~\eqref{eqn:approx-tv-by-products} is attained (otherwise, we
simply approximate). As the
partition $\mc{A}$ is finite and consists of rectangular sets, we can
assume the communicated $Z_i^{(t)}$ are discrete. We then have the
following lemma, whose proof we defer to
Section~\ref{sec:proof-regular-private-densities}. This is an extension of
the result~\cite{DworkRoVa10} that $(\diffp,\delta)$-private
channels are close to $(\diffp,0)$-private channels; naive application
of earlier constructions can yield in non-measurable objects and
non-regular conditional probabilities.
\begin{lemma}
  \label{lemma:regular-private-densities}
  Assume that $\mc{Z}$ is countable and
  that for each $i, t \in \N$, the channel
  $\channel(\cdot \mid x_i, \intoit{z})$ is a regular conditional
  probability and that it is $(\diffp, \delta)$-differentially
  private. Then there exists a regular conditional probability
  $\wb{\channel}(\cdot \mid x_i, \intoit{z})$ such that
  $\wb{\channel}$ is $\diffp$-differentially private and
  \begin{equation*}
    \sup_{x_i \in \mc{X}}
    \tvnorm{\channel(\cdot \mid x_i, \intoit{z})
      - \wb{\channel}(\cdot \mid x_i, \intoit{z})} \le
    \half \left[\frac{\delta}{1 + e^\diffp}
      + \frac{\delta}{1 + e^\diffp - \delta}\right].
  \end{equation*}
\end{lemma}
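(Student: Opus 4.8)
The plan is to carry out, in a \emph{measurable} way, the ``density trimming'' construction behind the fact of \citet{DworkRoVa10} that $(\diffp,\delta)$-private channels are close to purely $\diffp$-private ones. First I would reduce to a problem on fixed fibers: since $\mc Z$ is countable (and the incoming transcript $\intoit{z}$ then ranges over a countable product of copies of $\mc Z$), it suffices to fix $\intoit{z}$ and, writing $p_x(\cdot)\defeq\channel(\cdot\mid x,\intoit{z})$ for the conditional mass function on $\mc Z$ indexed by the $i$th user's datum $x\in\mc X$, to produce mass functions $\wb p_x$ that are \emph{pointwise} pure $\diffp$-private, $\wb p_x(z)\le e^\diffp\wb p_{x'}(z)$ for all $x,x',z$, with $\tvnorm{p_x-\wb p_x}$ at most the claimed quantity for every $x$ --- and crucially to do so by a single rule that is a Borel function of $(x,\intoit{z})$, so that reassembling over $\intoit{z}$ yields a genuine regular conditional probability $\wb\channel$. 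Existence of such $\wb p_x$ for a fixed family is essentially \citet{DworkRoVa10}; the content of the lemma is the measurable/canonical version.

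For the construction I would work from the pointwise envelopes $L(z)=\inf_{x'}p_{x'}(z)\le U(z)=\sup_{x'}p_{x'}(z)$, which depend on $z$ only and hence introduce no measurability issue. At outputs with $U(z)\le e^\diffp L(z)$ there is nothing to do; at the others I would ``compress the density profile at $z$ into a window of ratio $e^\diffp$'' by clipping each $p_x(z)$ into an interval $[a(z),e^\diffp a(z)]$ with level $a(z)$ chosen in a fixed way from $\{p_{x'}(z)\}_{x'}$. A clipped family is automatically pointwise $e^\diffp$-bounded but its totals need not equal $1$, so I would correct this in two measurable steps: (i) affinely rescale each clipped function's mass \emph{inside its window} by a scalar $\lambda_x\in[0,1]$ until all totals agree (this keeps every value inside $[a(z),e^\diffp a(z)]$), and (ii) add back a single common deficit mass distributed according to a fixed reference profile supported where $L(z)>0$ --- adding the \emph{same} sub-probability to every member of a pointwise-$e^\diffp$-bounded family preserves that property. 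Clipping, affine rescaling, and adding a fixed profile are Borel operations applied to $x\mapsto p_x(z)$, which is measurable because $\channel$ is a kernel, so $\wb\channel$ is a regular conditional probability (outputs with $L(z)=0$, possible only when $\delta$ is not small, just receive mass $0$ and are dropped).

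For the variation bound, $\tvnorm{p_x-\wb p_x}$ is at most the total mass moved in the clipping and the two corrections; the mass clipped off ``too-large'' outputs and the mass added at ``too-small'' outputs are each controlled by applying the $(\diffp,\delta)$-privacy inequality with $S$ the corresponding bad output set, and the corrections move no more. Carrying the bookkeeping for the two directions with their natural weights $1$ and $e^\diffp$, plus the one-sided renormalization constant, yields the claimed $\half\big[\tfrac{\delta}{1+e^\diffp}+\tfrac{\delta}{1+e^\diffp-\delta}\big]$.

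I expect the genuinely delicate step to be the one the text flags: not producing, for each $(x,\intoit{z})$, \emph{some} nearby $\diffp$-private distribution (that is \citet{DworkRoVa10} itself), but producing all of them by one measurable rule so that $\wb\channel$ is a bona fide regular conditional probability. This is why I would route everything through the fixed envelopes $L,U$ and Borel clip/rescale maps rather than the more common ``condition $M(x)$ on the event that its privacy loss against a reference input is small'' construction, whose output need not depend measurably on $x$. The secondary technical point is the choice of the clip level $a(z)$: a careless choice makes the bad output sets a union over $x'\in\mc X$ and loses a factor of $|\mc X|$ in the variation bound, so $a(z)$ must be tied to enough structure (a single witness value) that the $(\diffp,\delta)$-inequality applies without that union.
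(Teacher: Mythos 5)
Your measurability strategy (fix the fiber $\intoit{z}$, use only Borel operations on $x \mapsto p_x(z)$) is in the right spirit, but the quantitative heart of your construction has a genuine gap: no common window $[a(z), e^\diffp a(z)]$ anchored to a single witness can have clipping cost $O(\delta)$ for \emph{every} input. With a constant witness $x_0$ and window $[s\,p_{x_0}(z), s e^\diffp p_{x_0}(z)]$, one application of the pairwise $(\diffp,\delta)$ inequality controls only one clipping direction; the other direction is a total-variation--sized quantity. Concretely, take inputs $x_0, x_1, x_2, x_3, x_4$ and disjoint sets $A, A'$ of constant $p_{x_0}$-probability; on $A$ set $p_{x_1} = e^{-\diffp} p_{x_0}$ and $p_{x_2} = (1+\eta) p_{x_0}$, on $A'$ set $p_{x_3} = e^{\diffp} p_{x_0}$ and $p_{x_4} = p_{x_0}/(1+\eta)$, with $\eta \asymp \delta$ and the leftover mass balanced on separate regions while keeping all pointwise ratios within $e^{\pm\diffp}$. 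Every pair is then $(\diffp, O(\delta))$-private, yet both $A$ and $A'$ lie in your bad region $\{U > e^\diffp L\}$: to spare $x_1$ the window must hug the bottom ($s \approx e^{-\diffp}$) on $A$, while to spare $x_3$ it must hug the top ($s \approx 1$) on $A'$, so any witness-anchored rule clips some input by a constant factor on a set of constant probability, giving variation error of order $1 - e^{-\diffp}$, not $O(\delta)$. If instead you let $a(z)$ adapt to the whole family at $z$ (``hug the bulk''), the bad sets become unions over $x' \in \mc{X}$ and the pairwise inequality no longer bounds the moved mass---exactly the failure mode you flag---and here $\mc{X}$ may be uncountable (the lemma is invoked with $\mc{X} = \R^d$ via Corollary~\ref{corollary:gaussian}), so no factor-$|\mc{X}|$ bookkeeping is available. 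The exact constant $\half[\delta/(1+e^\diffp) + \delta/(1 + e^\diffp - \delta)]$ is likewise asserted rather than derived.

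The paper's proof avoids a common window altogether. For each ordered pair $(x, x')$ and fixed $w = \intoit{z}$ it modifies the channel only on the two violation sets $S_x = \{z : q(z \mid x, w) > e^\diffp q(z \mid x', w)\}$ and $S_{x'}$, replacing the two masses there by the fractions $e^\diffp/(1+e^\diffp)$ and $1/(1+e^\diffp)$ of their pointwise sum and then normalizing; the resulting $q_0(z \mid x; x', w)$ is measurable in all arguments, satisfies the $e^{\pm \diffp}$ ratio bound within the pair, and is within the stated variation distance of $q(\cdot \mid x, w)$ uniformly in $x'$. A single channel is then obtained by averaging over the comparator, $\wb{q}(z \mid x, w) = \int q_0(z \mid x; x', w)\, d\lambda(x')$ for an arbitrary fixed probability measure $\lambda$ on $\mc{X}$, which delivers joint measurability (hence a regular conditional probability), preserves the variation bound by convexity, and yields the pure $\diffp$-privacy guarantee. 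If you try to repair your clipping scheme, you will find you need the anchor to be a mixture over comparators rather than a single witness, at which point you have essentially rederived this pairwise-then-average argument.
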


Let $\wb{\channel}$ be the channels
Lemma~\ref{lemma:regular-private-densities} guarantees,
and let $\wb{\P}_{\pm 1}$ be the induced marginal distributions on
$\mc{Z}^{nT}$. Then
\begin{equation*}
  \tvnorm{\P_{1} - \P_{-1}}
  \le \tvnorm{\P_{1} - \wb{\P}_{1}}
  + \tvnorm{\wb{\P}_{1} - \wb{\P}_{-1}}
  + \tvnorm{\wb{\P}_{-1} - \P_{-1}}
\end{equation*}
by the triangle inequality.
Letting $\channeldens$ denote the p.m.f.\ of $\channel$,
we bound $\tvnorm{\P_\packval - \wb{\P}_\packval}$ by expanding
\begin{align*}
  \tvnorm{\P_\packval - \wb{\P}_\packval}
  & = \half \sum_{\bz \in \mc{Z}^{nT}}
  \left|
  \int \left(\channeldens(\bz \mid x_{\le n})
  - \wb{\channeldens}(\bz \mid x_{\le n})\right)
  dP_\packval(x_{\le n})
  \right| \\
  & = \half \sum_{\bz \in \mc{Z}^{nT}}
  \left|
  \int \left(\prod_{i,t} \channeldens(z_i^{(t)} \mid x_{\le n},
  \intoit{z}) - \prod_{i,t}\wb{\channeldens}(z_i^{(t)} \mid x_{\le n},
  \intoit{z})\right) dP_\packval(x_{\le n})\right| \\
  & = \half \sum_{\bz \in \mc{Z}^{nT}}
  \left|
  \int \left(\prod_{i,t} \channeldens(z_i^{(t)} \mid x_i,
  \intoit{z}) - \prod_{i,t} \wb{\channeldens}(z_i^{(t)} \mid x_i,
  \intoit{z})\right) \prod_{i \le n} dP_\packval(x_{\le n})\right|
\end{align*}
where we have used that $Z_i^{(t)}$ is conditionally independent
of $X_{\setminus i}$ given $X_i$ and $\intoit{Z}$.
Now, let
$(j, \tau) \prec (i, t)$ indicate the ordering that
either $\tau < t$ or $j < i$ and $\tau = t$
(and similarly $(j, \tau) \succ (i, t)$ means that
$\tau > t$ or $\tau = t$ and $j > i$),
and define the shorthand
\begin{align*}
  \channeldens_{\prec (i, t)}(\bz \mid x_{\le n})
  \defeq
  \prod_{(j, \tau) \prec (i, t)}
  \channeldens(z_j^{(\tau)} \mid x_j, \intoitj{z})
\end{align*}
and similarly for $\wb{\channeldens}$ and $\channeldens_{\succ (i,t)}$.
Using the telescoping identity that
\begin{equation*}
  \prod_i a_i - \prod_i b_i =
  \sum_i \bigg(\prod_{j < i} a_j\bigg) (a_i - b_i)
  \bigg(\prod_{j > i} b_j\bigg)
\end{equation*}
and the triangle inequality,
we have
\begin{align}
  \label{eqn:finally-get-the-tv}
  \lefteqn{2 \tvnorm{\P_\packval - \wb{\P}_\packval}} \\
  & \le
  \sum_{\stackrel{\packval : \packval_j = 1}{i, t}}
  \int_{\mc{X}^n}
  \underbrace{
    \sum_{\bz \in \mc{Z}^{nT}} \channeldens_{\prec (i,t)}(\bz \mid x_{\le n})
    \left|
    \channeldens(z_i^{(t)} \mid x_i, \intoit{z})
    - \wb{\channeldens}(z_i^{(t)} \mid x_i, \intoit{z})
    \right|
    \wb{\channeldens}_{\succ (i,t)} (\bz \mid x_{\le n})}_{\eqdef
    T_{it}}
  dP_\packval(x_{\le n}).
  \nonumber
\end{align}
The term $T_{it}$ satisfies
\begin{equation*}
  T_{it}
  = \sum_{\stackrel{(j,\tau) \prec (i,t),}{z_j^{(\tau)}}}
  \channeldens_{\prec (i,t)}(\bz \mid x_{\le n})
  \sum_{z_i^{(t)} \in \mc{Z}}
  \left|\channeldens(z_i^{(t)} \mid x_i, \intoit{z})
  - \wb{\channeldens}(z_i^{(t)} \mid x_i, \intoit{z})\right|
  \sum_{\stackrel{(j,\tau) \succ (i,t),}{z_j^{(\tau)}}}
  \wb{\channeldens}_{\succ (i,t)} (\bz \mid x_{\le n}),
\end{equation*}
where the variation distance guarantee of
Lemma~\ref{lemma:regular-private-densities} (coupled with the privacy
Assumption~\ref{assumption:summed-approximate-privacy}) guarantees that
\begin{align*}
  \sum_{z_i^{(t)} \in \mc{Z}}
  \left|\channeldens(z_i^{(t)} \mid x_i, \intoit{z})
  - \wb{\channeldens}(z_i^{(t)} \mid x_i, \intoit{z})\right|
  & \le
  \half \left[\frac{\delta_{i,t}(\intoit{z})}{
      1 + e^{\diffp_{i,t}(\intoit{z})}}
    + \frac{\delta_{i,t}(\intoit{z})}{
      1 + e^{\diffp_{i,t}(\intoit{z})} - \delta_{i,t}
      (\intoit{z})}
    \right] \\
  & \le \delta_{i,t}(\intoit{z})
\end{align*}
as $\delta_{i,t} \in [0, 1]$.
We thus obtain
\begin{align*}
  T_{it}
  & \le \sum_{\stackrel{(j,\tau) \prec (i,t),}{z_j^{(\tau)}}}
  \channeldens_{\prec (i,t)}(\bz \mid x_{\le n})
  \delta_{i,t}(\intoit{z})
  \max_{z_i^{(t)} \in \mc{Z}}
  \sum_{\stackrel{(j,\tau) \succ (i,t),}{z_j^{(\tau)}}}
  \wb{\channeldens}_{\succ (i,t)} (\bz \mid x_{\le n}) \\
  & = \sum_{\stackrel{(j,\tau) \prec (i,t),}{z_j^{(\tau)}}}
  \channeldens_{\prec (i,t)}(\bz \mid x_{\le n})
  \delta_{i,t}(\intoit{z})
  = \E_\channel\left[\delta_{i,t}(\intoit{Z}) \mid X_{\le n} = x_{\le n}\right],
\end{align*}
where the equality follows because p.m.f.s sum to 1.
Substituting this into inequality~\eqref{eqn:finally-get-the-tv}
yields
\begin{equation*}
  \tvnorm{\P_\packval - \wb{\P}_\packval}
  \le \half 
  \sum_{i,t} \E_{P_\packval}[\delta_{i,t}(\intoit{Z})]
  = \half \sum_{i,t} \E_{\P_\packval}\left[\delta_{i,t}(\intoit{Z})\right]
  \le \frac{\totaldelta}{2},
\end{equation*}
the final inequality following again by
Assumption~\ref{assumption:summed-approximate-privacy}. This gives 
Lemma~\ref{lemma:tv-eps-delta}.

\subsection{Proof of Lemma~\ref{lemma:regular-private-densities}}
\label{sec:proof-regular-private-densities}

If the space $\mc{X}$ is countable, then this result is essentially due to
\citet{DworkRoVa10} (see Lemma~2.1 in the long version of their paper)
once we apply the averaging technique in the end of this proof.
When the space $\mc{X}$ is not countable, we must be more careful to
maintain measurability, so that our construction actually yields a valid
channel. Because $\mc{Z}$ is countable, however, it is possible to achieve
our desired result.  Without loss of generality, because $\mc{Z}$ is
countable, we may assume that $\channel$ has a density (p.m.f.) $q$ on
$\mc{Z}$, as each $\channel(\cdot \mid x_i, \intoit{z})$ is absolutely
continuous w.r.t.\ the counting measure on $\mc{Z}$.

Let us take $x, x' \in \mc{X}$ otherwise arbitrary, and let $w = \intoit{z}$
for shorthand, so that we have densities
$q(z \mid x, w)$ and $q(z \mid x', w)$, both of which are
measurable in their (three) arguments. Then define the two sets
\begin{equation*}
  S_{x} \defeq \{z \in \mc{Z} \mid q(z \mid x, w) > e^\diffp q(z \mid x', w)
  \}
  ~~\mbox{and} ~~
  S_{x'} \defeq \{z \in \mc{Z} \mid q(z \mid x', w) > e^\diffp q(z \mid x, w)
  \}
\end{equation*}
and the intermediate densities
\begin{align*}
  q_1(z \mid x; x', w)
  \defeq \, &
  \left[q(z \mid x, w) + q(z \mid x', w)\right]
  \left(\frac{e^\diffp}{e^\diffp + 1} \indic{z \in S_x}
  + \frac{1}{e^\diffp + 1} \indic{z \in S_{x'}}\right) \\
  & + q(z \mid x, w) \indic{z \not \in S_x \cup S_{x'}}, \\
  q_1(z \mid x'; x, w)
  \defeq \, &
  \left[q(z \mid x, w) + q(z \mid x', w)\right]
  \left(\frac{1}{e^\diffp + 1} \indic{z \in S_x}
  + \frac{e^\diffp}{e^\diffp + 1} \indic{z \in S_{x'}}\right) \\
  & + q(z \mid x, w) \indic{z \not \in S_x \cup S_{x'}}.
\end{align*}
Evidently these quantities satisfy
\begin{equation*}
  e^{-\diffp}
  \le \frac{q_1(z \mid x; x', w)}{q_1(z \mid x'; x, w)}
  \le e^\diffp
\end{equation*}
for all $z \in \mc{X}$, and moreover, by inspection they are
$(z, x, x', w)$-measurable as they are the product of measurable functions.
Let $\channel_1$ denote the induced measure (not necessarily probabilities)
on $\mc{Z}$ by the constructed $q_1$.

With this definition of $Q_1$, we may define the two quantities
\begin{align*}
  \alpha_x
  \defeq
  \channel(S_x \mid x, w)
  - \channel_1(S_x \mid x; x', w)
  & = \channel(S_x \mid x, w)
  - \frac{e^\diffp}{1 + e^\diffp}
  (\channel(S_x \mid x, w) + \channel(S_x \mid x', w)) \\
  & = \frac{\channel(S_x \mid x, w) - e^\diffp \channel(S_x \mid x', w)}{
    1 + e^\diffp} \in \left[0, \frac{\delta}{1 + e^\diffp}\right]
\end{align*}
and similarly
\begin{equation*}
  \alpha_{x'}
  \defeq \channel(S_{x'} \mid x', w) - \channel_1(S_{x'} \mid x'; x, w)
  \in \left[0, \frac{\delta}{1 + e^\diffp}\right].
\end{equation*}
We also have $\channel(S_x \mid x, w)
- \channel_1(S_x \mid x; x', w)
= \channel_1(S_x \mid x'; x, w) - \channel(S_x \mid x', w)$
and $\channel(S_{x'} \mid x', w)
- \channel_1(S_{x'} \mid x'; x, w)
= \channel_1(S_{x'} \mid x; x', w) - \channel(S_{x'} \mid x, w)$
by construction.
With these definitions and equalities, we have the variation bound
\begin{align*}
  \lefteqn{\tvnorm{\channel(\cdot \mid x, w)
      - \channel_1(\cdot \mid x'; x, w)}} \\
  & = \half \left(\channel(S_x \mid x, w) - \channel_1(S_x \mid x; x', w)\right)
  + \half \left(\channel_1(S_{x'} \mid x; x', w)
  - \channel(S_{x'} \mid x, w)\right) \\
  & = \half \alpha_x + \half \alpha_{x'} \le \frac{\delta}{1 + e^\diffp}.
\end{align*}

The normalized densities
\begin{equation*}
  q_0(z \mid x; x', w) \defeq \frac{q_1(z \mid x; x', w)}{
    \sum_z q_1(z \mid x; x', w)}
  ~~ \mbox{and} ~~
  q_0(z \mid x'; x, w) \defeq \frac{q_1(z \mid x'; x, w)}{
    \sum_z q_1(z \mid x; x', w)}
\end{equation*}
are both $(z, x, x', w)$-measurable, and they satisfy
the ratio guarantee
$|\log \frac{q_0(z \mid x; x', w)}{q_0(z \mid x'; x, w)}| \le \diffp$.
Moreover, we have
$Q_1(\mc{Z} \mid x; x', w) = 1 - \alpha_x + \alpha_{x'}$ and
$Q_1(\mc{Z} \mid x'; x, w) = 1 - \alpha_{x'} + \alpha_x$.
We then have
\begin{align*}
  \lefteqn{\tvnorm{Q(\cdot \mid x, w)
      - Q_0(\cdot \mid x; x', w)}} \\
  & \le \tvnorm{Q(\cdot \mid x, w) - Q_1(\cdot \mid x; x', w)}
  + \tvnorm{Q_1(\cdot \mid x; x', w) - Q_0(\cdot \mid x; x', w)} \\
  & = \frac{\alpha_x + \alpha_{x'}}{2}
  + \half \left|\frac{1}{Q_1(\mc{Z} \mid x; x', w)} - 1 \right|
  = \frac{\alpha_x + \alpha_{x'}}{2}
  + \frac{|\alpha_x - \alpha_{x'}| / 2}{1 - \alpha_x + \alpha_{x'}}
  \le \half \left[\frac{\delta}{1 + e^\diffp}
    + \frac{\delta}{1 + e^\diffp - \delta}\right].
\end{align*}
where we have taken $\alpha_x = \delta / (1 + e^\diffp)$ and $\alpha_{x'} =
0$ to maximize the sum above. An identical bound holds on $\tvnorms{Q(\cdot
  \mid x', w) - Q_0(\cdot \mid x'; x, w)}$.

It remains to construct our desired regular conditional distribution
$\wb{Q}$. To that end, note that
each of $q_0(z \mid x; x', w)$ and $q_0(z \mid x'; x, w)$ are
measurable in $(z, x, x', w)$ by our construction. Choosing an
arbitrary probability measure $\lambda$ on the space $\mc{X}$, we may
then define
\begin{equation*}
  \wb{q}(z \mid x, w) \defeq \int q_0(z \mid x; x', w) d\lambda(x')
\end{equation*}
for all $z, x, w$.
Taking $\wb{Q}$ to be the associated probability measure, we evidently
have that $\wb{Q}$ is a regular conditional probability, that
$\tvnorms{\wb{Q}(\cdot \mid x, w) - Q(\cdot \mid x, w)}
\le \half(\frac{\delta}{1 + e^\diffp} + \frac{\delta}{1 + e^\diffp - \delta})$,
and that
$e^{-\diffp} \le \wb{q}(z \mid x, w) / \wb{q}(z \mid x', w)
\le e^\diffp$ as desired.


\subsection{Proof of Lemma~\ref{lemma:gaussian-achievability}}
\label{sec:proof-gaussian-achievability}

We allow $c, C$ to be numerical constants whose value may change from
line to line. We also assume $\sigma^2 > 0$ is at least a numerical constant.
First, we have that $|Z_i| \le b$. Thus
\begin{equation*}
  \P(|\wb{Z}_n - \E[\wb{Z}_n]| \ge t) \le \exp\left(-\frac{n
    t^2}{2 b^2}\right)
  ~~ \mbox{for~} t \ge 0
\end{equation*}
by Hoeffding's inequality.
Note that $\E[\wb{Z}_n] \in [1 - 2 \Phi(1/\sigma), 1 + 2 \Phi(-1/\sigma)]
\subset [e^{-c / \sigma^2}, 1 - e^{-c / \sigma^2}] = [e^{-C}, 1 - e^{-C}]$
by our assumption that $\sigma$ is at least a constant.
Now, let $\mc{E}$ denote the event that $\wb{Z}_n \in [e^{-c / \sigma^2} /
  2, 1 - e^{-c / \sigma^2}/2]$, which happens with probability at least $1 -
\exp(-c n / b^2)$.  On this event, a Taylor expansion of
$\Phi^{-1}$ gives
\begin{align*}
  \sigma \Phi^{-1}
  \left(\frac{1 - \wb{Z}_n}{2}\right)
  & = \sigma \Phi^{-1}
  \left(\frac{1 - \E[\wb{Z}_n]}{2}\right)
  + \sigma \frac{\wb{Z}_n - \E[\wb{Z}_n]}{
    \phi(\theta)}
  \pm C \sigma (\wb{Z}_n - \E[\wb{Z}_n])^2 \\
  & = \theta
  + \sigma \frac{\wb{Z}_n - \E[\wb{Z}_n]}{
    \phi(\theta)}
  \pm C \sigma (\wb{Z}_n - \E[\wb{Z}_n])^2.
\end{align*}
We have $|\wb{Z}_n - \E[\wb{Z}_n]| \le
\sqrt{2 b^2 t / n}$ with probability
at least $1 - e^{-t}$
by Hoeffding's inequality,
and we also have
\begin{align*}
  \E_\theta[\ltwos{\what{\theta}_n - \theta}^2]
  & \le \frac{2 \sigma^2}{\phi(\theta)^2}
  \E[(\wb{Z}_n - \E[\wb{Z}_n])^2]
  + C^2 \sigma^2 \E[(\wb{Z}_n - \E[\wb{Z}_n])^4]
  + C \P(\mc{E}^c) \\
  & \le C \frac{b^2 \sigma^2}{n}
  + C  \frac{b^4 \sigma^2}{n^2}
  + C e^{-c n / b^2},
\end{align*}
where the second inequality follows by the $b$-boundendess
of the $Z_i$ and standard moment bounds for sub-Gaussian random
variables~\citep{Vershynin12}.

\end{document}